\theoremstyle{plain}
\newtheorem{theorem}{Theorem}
\newtheorem{proposition}[theorem]{Proposition}
\newtheorem{lemma}[theorem]{Lemma}
\newtheorem{corollary}[theorem]{Corollary}
\theoremstyle{definition}
\newtheorem*{defn}{Definition}
\newtheorem{example}{Example}
\theoremstyle{remark}
\newtheorem*{remarks}{Remarks}
\numberwithin{theorem}{section}
\numberwithin{example}{section}
\numberwithin{equation}{section}
\numberwithin{figure}{section}
\newcommand\sref[1]{\textsection\ref{#1}}					
\newcommand\nobelowdisplayskip
\def\blfootnote{\gdef\@thefnmark{}\@footnotetext}
\newlength\bshft
\def\fakebold#1{\ThisStyle{\ooalign{$\SavedStyle#1$\cr%
  \kern-\bshft$\SavedStyle#1$\cr%
  \kern\bshft$\SavedStyle#1$}}}
\newcommand\bRR{\fakebold\RR}
\newcommand\bTheta{\boldsymbol\Theta}
\newcommand\bOmega{\boldsymbol\Omega}
\newcommand\bGamma{\boldsymbol\Gamma}
\newcommand\To{\smash{\mathring T}\vphantom{T}}			
\let\d\bdy 						
\DeclareMathOperator\WF{WF}		
\let\gradient\nabla				
\newcommand\PsiDO{\ensuremath{\Psi\text{\normalfont DO}}}	
\newcommand\Loc{_{\text{loc}}}		
\newcommand\En{\mathbf E}		
\newcommand\KE{\mathbf{KE}}		
\newcommand\sC{\mathscr C}		
\newcommand\DT{\ensuremath{_{\text{\normalfont DT}}}}		
\newcommand\MDT{\ensuremath{_{\text{\normalfont MDT}}}}	
\newcommand\JCS{J_{\text C\shortrightarrow \text S}}		
\newcommand\JCSp{J_{\text C\shortrightarrow \text S+}}	
\newcommand\JCB{J_{\text C\shortrightarrow \bdy}}		
\newcommand\JBS{J_{\bdy\shortrightarrow \text S}}		
\newcommand\JBB{J_{\bdy\shortrightarrow \bdy}}			
\newcommand\subI{_{\text I}}						
\newcommand\subR{_{\text R}}						
\newcommand\subT{_{\text T}}						
\newcommand\supi{^{\text i}}						
\newcommand\supo{^{\text o}}						
\newcommand\supio{^{\smash{\text i/\text o}}}			
\newcommand\eqml{\Eq}					
\newcommand\eqdef{\overset{\text{def}}{=}}		
\newcommand\eqFSP{\overset{\text{FSP}}{=}}		
\newcommand\Garding{G\aa{}rding}
\newcommand{\bmat}{\left[\begin{matrix}}
\newcommand{\emat}{\end{matrix}\right]}
\newcommand{\col}[1]{\begin{bmatrix}#1\end{bmatrix}}
\def\beq{\begin{equation} }
\def\eeq{\end{equation}}
\def\ben{\begin{enumerate} }
\def\een{\end{enumerate} }
\def \p { \partial}
\def \u{ \mathbf{u}} \def \f{\mathbf{f}}
\def \g{ \mathbf{g}}
\def \R{\mathcal{R}}
\def \M{ \mathcal{M}}
\newcommand\tail{_{\textup{tail}}}
\title{\vspace{-1em}Scattering Control for the Wave Equation with Unknown Wave Speed}
\author{Peter Caday$^{\text{*}}$\!,\,
		Maarten V.~de Hoop$^{\text{\textdagger}}$\!,\,
		Vitaly Katsnelson$^{\text{\textdaggerdbl}}$\!,\, and
		Gunther Uhlmann$^\parallel$}
\date\today
\begin{document}

\maketitle

\begin{abstract}
	Consider the acoustic wave equation with unknown wave speed $c$, not necessarily smooth. We propose and study an iterative control procedure that erases the history of a wave field up to a given depth in a medium, without any knowledge of $c$. In the context of seismic or ultrasound imaging, this can be viewed as removing multiple reflections from normal-directed wavefronts.
\end{abstract}

\section{Introduction}								\label{s:intro}

\blfootnote{\!\!$^{\text{*}}$ \url{pac5@rice.edu}\qquad $^{\text{\textdagger}}$ \url{mdehoop@rice.edu} \qquad $^{\text{\textdaggerdbl}}$ \url{vk17@rice.edu} \qquad $^\parallel$ \url{gunther@math.washington.edu}}
\blfootnote{\!\!$^{\text{*}}$ $^{\text{\textdagger}}$ $^{\text{\textdaggerdbl}}$ Department of Computational and Applied Mathematics, Rice University.}
\blfootnote{\!\!$^\parallel$ Department of Mathematics, University of Washington and Institute for Advanced Study, Hong Kong University of Science and Technology.}

Consider the acoustic wave equation with an unknown wave speed $c$, not necessarily smooth, on a finite or infinite domain $\Omega\subset\RR^n$. Assume that we can probe our domain $\Omega$ with arbitrary Cauchy data outside of $\Omega$, and measure the reflected waves outside $\Omega$ for sufficiently large time. The inverse problem is to deduce $c$ from these reflection data, and this is the basis for many wave-based imaging methods, including seismic and ultrasound imaging.

Toward this goal, we will define and study a time reversal-type iterative process, the \emph{scattering control series}. We were inspired by the work of Rose~\cite{Rose02} in one dimension, who developed a ``single-sided autofocusing'' procedure and identified it as Volterra iteration for the classical Marchenko equation. The Marchenko equation solves the inverse problem for the one-dimensional acoustic wave equation\footnote{More precisely, the Marchenko equation treats the constant-speed wave equation with potential, to which the one-dimensional acoustic wave equation can be reduced by a change of coordinates.}, recovering $c$ on a half-line from measurements made on the boundary. 
In the course of our research, it became evident that the new procedure is quite closely linked to boundary control problems~\cite{Belishev97,DKO}, and has similar properties to Bingham et al.'s iterative time-reversal control procedure~\cite{BKLS}.

In essence, scattering control allows us to isolate the deepest portion of a wave field generated by given Cauchy data--- behavior we demonstrate with both an exact and microlocal (asymptotically high-frequency) analysis. Along the way we present several applications of scattering control, including the removal of multiple reflections and the measurement of energy content of a wave field at a particular depth in $\Omega$. In a future paper, we anticipate illustrating how to locate discontinuities in $c$ and recover $c$ itself.

In the mathematical literature, the inverse problem's data are typically given on the boundary of $\Omega$, in terms of the Dirichlet-to-Neumann map or its inverse. We find that the Cauchy data-based reflection map allows us a much cleaner analysis. It is not hard to see (cf.~Proposition~\ref{p:DN-determines-Cauchy}) that the Dirichlet-to-Neumann map determines the Cauchy data reflection map, so no extra information is needed.

We start with an informal, graphical introduction to the problem. Section~\ref{s:exact} defines the scattering control series rigorously and provides an exact analysis of its behavior and convergence properties. Section~\ref{s:microlocal} pursues the same questions from a microlocal perspective. The discrepancy that arises between the exact and microlocal analyses allows us to provide more insight on convergence in Section~\ref{s:compare}. Section~\ref{s:marchenko} concludes by connecting our work to that of Rose and Marchenko.

\subsection{Motivation}							\label{s:motivation}

Before defining the scattering control equation and series, we begin by motivating our problem with a graphical example. 
In Figure~\ref{f:mr-demo}, the domain is $\Omega=\{x>0\}\subset\RR$, with a piecewise constant wave speed $c$ having two discontinuities. We extend $c$ to all of $\RR$, but assume it is known only outside $\Omega$. Now consider the solution of the acoustic wave equation on $\RR$ for time $t\in[0,2T]$, with rightward-traveling Cauchy data $h_0$ supported outside $\Omega$. The initial wave scatters from the discontinuities in $c$, producing an infinite sequence of reflections (Figure~\subref*{f:mr-demo-original}).

In imaging, one attempts to recover $c$ or some proxy for it. In many imaging algorithms currently in use, only waves having undergone a single reflection (so-called \emph{primary reflections}) are typically desired, while the remaining \emph{multiple reflections} only complicate the interpretation of the data. As a result, much research in seismic imaging has been directed toward removing or attenuating multiple reflections.

\begin{figure}
\subfloat[Wave field generated by Cauchy data $h_0$]{
	\includegraphics[page=1]{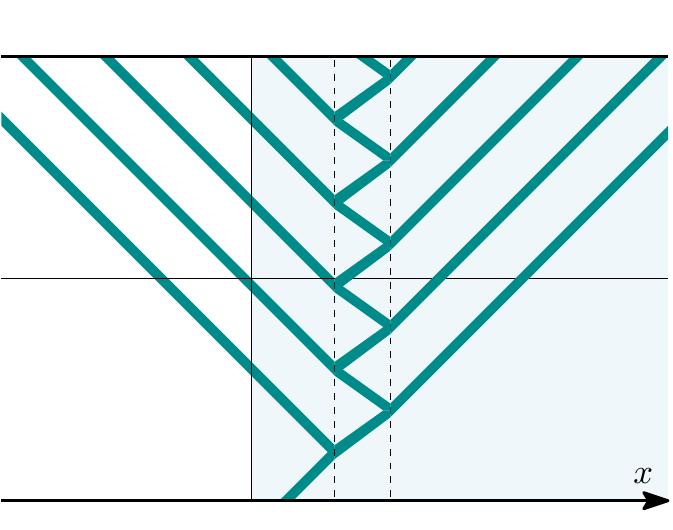}
	\label{f:mr-demo-original}
}
\hfill
\includegraphics[page=4]{Figures/NewTwoStep}
\hfill
\subfloat[Wave field with trailing pulse added to initial data)]{
	\includegraphics[page=2]{Figures/NewTwoStep}
	\label{f:mr-demo-ma}
}

\caption{(a) A domain $\Omega$ (shaded) with unknown wave speed $c$ is probed by exterior Cauchy data $h_0$. Two discontinuities in $c$ (dashed) scatter the incoming wave. (b) An appropriate trailing pulse added to $h_0$ suppresses multiple reflections.}
\label{f:mr-demo}
\end{figure}

For the problem at hand, it is plausible (and can be proven) that by adding a proper control, or \emph{trailing pulse} to the initial data, the multiple reflections may be suppressed, at the cost of a harmless additional outgoing pulse (Figure~\subref*{f:mr-demo-ma}). If $c$ were known inside the domain (cf.~\sref{s:ml-construct}), an appropriate control may be constructed microlocally under some geometric conditions. The issue, of course, is to find the control knowing only the reflection response of $\Omega$. 

Rather than attacking the multiple reflection suppression problem, however, we consider a related problem obtained by focusing on the interior, rather than exterior, of $\Omega$. Returning to Figure~\subref{f:mr-demo-ma}, we note that the wave field rightmost portion of the medium contains a single, purely transmitted wave, which we call the \emph{direct transmission} of the initial data $h_0$. Slightly more precisely, the wave field inside $\Omega$ at time $2T$ is generated exactly by the direct transmission at time $T$. The control has therefore isolated the direct transmission; our problem is to find such a control for a given $h_0$ using only information available outside $\Omega$.

\subsection{Almost direct transmission}				\label{s:adt}

At its heart, the direct transmission is a geometric optics construction, and is valid only in the high-frequency limit where geometric optics holds. Consequently, the directly transmitted wave field can be isolated only microlocally (modulo smooth functions). We will consider the geometric optics viewpoint later, but initially avoid a microlocal approach, as follows. Informally, suppose $h_0$ creates a wave that enters $\Omega$ at time 0, travelling normal to the boundary. At a later time $T$, the directly transmitted wave may be singled out from all others by its distance from the boundary: namely, $T$ (as long as it has not crossed the cut locus). By \emph{distance} we mean the travel time distance, which for $c$ smooth is Riemannian distance in the metric $c^{-2}dx^2$.

With this in mind, given Cauchy data $h_0$ supported just outside $\Omega$ we substitute for the direct transmission the \emph{almost direct transmission}, the part of the wave field of $h_0$ at time $T$ of depth at least $T$. More precisely, let $\Theta$ be a domain containing $\Omega$ and $\supp h_0$; then let $\Theta_T\subset\Theta$ be the set of points in $\Theta$ greater than distance $T$ from the boundary. The almost direct transmission of initial data $h_0$ at time $T$ is the restriction to $\Theta_T$ of its wave field at $t=T$ (Figure~\ref{f:adt}). 

\begin{figure}[tb]
	\centering
	\includegraphics{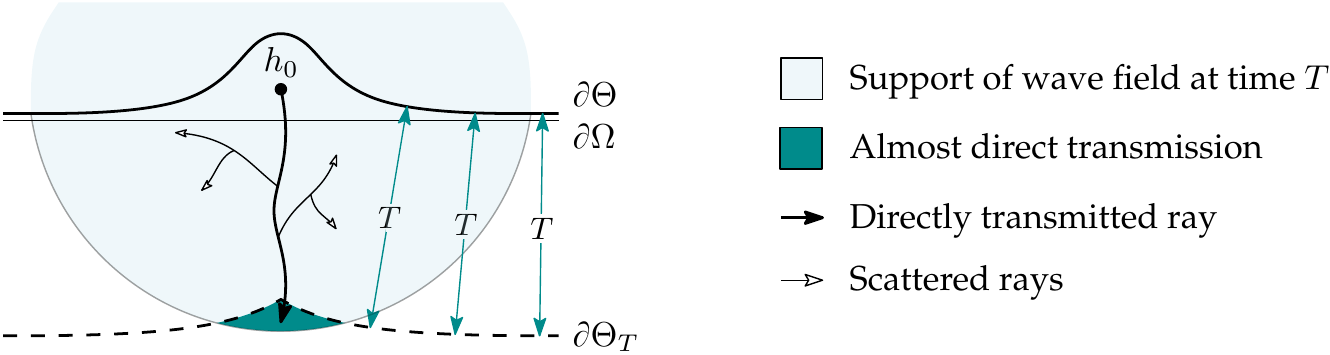}
	\caption{Almost direct transmission of initial data $h_0$ at time $T>0$.}
	\label{f:adt}
\end{figure}

The nonzero volume of $\Theta\setminus\Omega$ means that some multiply reflected rays may still reach $\Theta_T$. Hence, we have in mind taking a limit as $\Theta\to\Omega$ and the support of $h_0$ approaches a point on $\bdy\Omega$. In this limit, the support of the almost direct transmission converges to a point along the normal directly-transmitted ray, for sufficiently small $T$ (at least in the absence of caustics and before reaching the cut locus); see Figure~\ref{f:adt-shrink}.

\begin{figure}[tb]
	\centering
	\vspace*{0.125in}
	\includegraphics{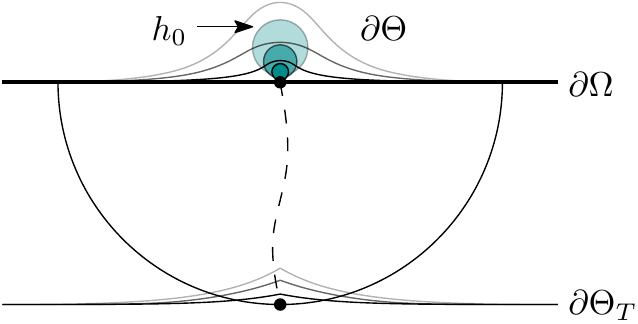}
	\caption{Shrinking the support of the initial data $h_0$ to a point. The dashed line indicates the normal geodesic from that point; the support of the almost direct transmission shrinks to a point on the geodesic.}
	\label{f:adt-shrink}
\end{figure}

\section{Exact scattering control}					\label{s:exact}

We set up the problem and our notation in~\sref{s:exact-setup}, then introduce the scattering control procedure in~\sref{s:exact-maf}, where we study its behavior and convergence properties. The final result, expressed in Corollary~\ref{c:dt-wave field}, is that scattering control recovers the almost direct transmission's wave field outside $\Theta$, modulo harmonic extensions. In~\sref{s:energy}, we apply this to recover the energy (with a harmonic extension) and kinetic energy of this portion of the wave field. Proofs for the results in these sections follow in~\sref{s:exact-proofs}.

\subsection{Setup}								\label{s:exact-setup}

\subsubsection{Unique continuation}					\label{s:uc}

Let $\Omega\subseteq\RR^n$ be a Lipschitz domain, and let $c$ be a wave speed satisfying $c,c^{-1} \in L^\infty(\RR^n)$. 

Initially, the sole extra restriction we impose on $c$ is that it satisfy a certain form of unique continuation. More precisely, assume there is a Lipschitz distance function $d(x,y)$ such that any $u\in C(\RR, H^1(\RR^n))$ satisfying either:
\pagebreak
\begin{itemize}
	\item $u,\d_t u=0$ for $t=0$ and $d(x,x_0)<T$ (finite speed of propagation)
	\item $u=0$ on a neighborhood of $[-T,T]\times\{x_0\}$ (unique continuation)
\end{itemize}
is also zero on the \emph{light diamond}
	\[
		D(x_0,T) = \set{(t,x)}{d(x,x_0)<T-\tabs{t}}\!,
	\]
if $(\d_t^2-c^2\Delta) u=0$ on a neighborhood of $D(x_0,T)$, for any $x_0\in\RR^n$, $T>0$.

While the set of wavespeeds with this property has not been settled in general, several large classes of $c$ are eligible, stemming from the well-known work of Tataru~\cite{Tataru}. Originally known for smooth sound speeds~\cite[Theorem 4]{SU-TATVariable}, Stefanov and Uhlmann later extended this to piecewise smooth speeds with conormal singularities~\cite[Theorem 6.1]{SU-TATBrain}, and Kirpichnikova and Kurylev to a class of piecewise smooth speeds in a certain kind of polyhedral domain~\cite[\textsection5.1]{KK}. The corresponding travel time $d(x,y)$ is the infimum of the lengths of all $C^1$ curves $\gamma(s)$ connecting $x$ and $y$, measured in the metric $c^{-2}dx^2$, such that $\gamma^{-1}(\singsupp c)$ has measure zero.

\subsubsection{Geometric setup}							\label{s:geometric-setup}

Next, let us set up the geometry of our problem.
We will probe $\Omega$ with Cauchy data (an \emph{initial pulse}) concentrated close to $\Omega$, in some Lipschitz domain $\Theta\supset\Omega$. We will add to this initial pulse a Cauchy data control (a \emph{tail}) supported outside $\Theta$, whose role is to remove multiple reflections up to a certain depth, controlled by a time parameter $T\in (0,\frac12\diam\Omega)$. This will require us to consider controls supported in a Lipschitz neighborhood $\Upsilon$ of $\clsr\Theta$ that satisfies $d(\bdy\Upsilon,\clsr\Theta)>2T$ and is otherwise arbitrary.

While we are interested in what occurs inside $\Omega$, the initial pulse region $\Theta$ will actually play a larger role in the analysis. First, define the \emph{depth} $d^*_\Theta(x)$ of a point $x$ inside $\Theta$:
\begin{equation}
	d^*_\Theta(x) = \when{+d(x,\bdy\Theta),		& x \in \Theta,\\
					  -d(x,\bdy\Theta),			& x \notin \Theta.}
													\label{e:exact-depth}
\end{equation}
Larger values of $d^*_\Theta$ are therefore deeper inside $\Theta$. For each $t$, define\footnote{We tacitly assume throughout that $\Theta_t$, $\Theta_t^\star$ are Lipschitz.} the open sets
\begin{nalign}
	\Theta_t^{\phantom\star} &= \set{x\in\Upsilon}{d^*_\Theta(x) > t}\!,\\
	\Theta_t^\star &= \set{x\in\Upsilon}{d^*_\Theta(x) < t}\!.
													\label{e:def-Theta-t-star}
\end{nalign}
As in~\eqref{e:def-Theta-t-star} above, we use a superscript $\star$ to indicate sets and function spaces lying outside, rather than inside, some region.

\subsubsection{Acoustic wave equation}					\label{s:wave-setup}

Let $\tilde{\mathbf C}$ be the space of Cauchy data of interest:
\begin{align}
	\tilde{\mathbf C} = H_0^1(\Upsilon) \oplus L^2(\Upsilon),
\end{align}
considered as a Hilbert space with the \emph{energy inner product}
\begin{equation}
	\big\langle{(f_0,f_1),\,(g_0,g_1)\big\rangle} = \int_{\Upsilon} \left(\gradient f_0(x)\cdot\gradient \conj g_0(x) + c^{-2}f_1(x)\conj g_1(x)\right)\,dx.
\end{equation}
Within $\tilde{\mathbf C}$ define the subspaces of Cauchy data supported inside and outside $\Theta_{t}$:
\begin{nalign}
	\mathbf H_t &= H_0^1(\Theta_{t})\oplus L^2(\Theta_{t}),					& \hspace{1in} \mathbf H &= \mathbf H_0,\\
	\tilde{\mathbf H}_t^{\mathrlap\star} &= H_0^1(\Theta_{t}^\star) \oplus L^2(\Theta_{t}^\star),	& \tilde{\mathbf H}^{\star}\! &= \tilde{\mathbf H}_0^\star.
\end{nalign}
Define the energy and kinetic energy of Cauchy data $h=(h_0,h_1)\in\tilde{\mathbf C}$ in a subset $W\subseteq\RR^n$:
\begin{align}
	\En_W(h) &= \int_W \left(\dabs{\gradient h_0}^2 + c^{-2} \dabs{h_1}^2\right)\,dx,
	&
	\KE_W(h) &= \int_W c^{-2}\dabs{h_1}^2\,dx.
\end{align}
Next, define $F$ to be the solution operator~\cite{LionsMagenes1} for the acoustic wave initial value problem:
\begin{align}
	&F\colon H^1(\RR^n)\oplus L^2(\RR^n)\to C(\RR,H^1(\RR^n)),
	&
	&
	F(h_0,h_1) = u
	\text{\; s.t. }
	\begin{pdebracketed}
	(\partial_t^2-c^2\Delta)u &= 0,\\
				\drestr{u}_{t=0} &= h_0,\\
			\drestr{\d_t u}_{t=0} &= h_1.
	\end{pdebracketed}
	\label{e:wave-ivp}
\end{align}
Let $R_s$ propagate Cauchy data at time $t=0$ to Cauchy data at $t=s$:
\begin{align}
	R_s = \left(F,\d_t F\right)\!\Big|_{t=s}
	\mspace{-8mu}
	\colon H^1(\RR^n)\oplus L^2(\RR^n)\to H^1(\RR^n)\oplus L^2(\RR^n).
\end{align}
Now combine $R_s$ with a time-reversal operator $\nu\colon \tilde{\mathbf C}\to\tilde{\mathbf C}$, defining for a given $T$
\begin{align}
	R &= \nu\circ R_{2T},
	&
	\nu&\colon (f_0,f_1)\mapsto(f_0,-f_1).
\end{align}
In our problem, only waves interacting with $(\Omega,c)$ in time $2T$ are of interest. Consequently, let us ignore Cauchy data not interacting with $\Theta$, as follows.

Let $\mathbf G=\tilde{\mathbf H}^\star\cap\big( R_{2T}(H^1_0(\RR^n\setminus\clsr\Theta)\oplus L^2(\RR^n\setminus\clsr\Theta))\big)$ be the space of Cauchy data in $\tilde{\mathbf C}$ whose wave fields vanish on $\Theta$ at $t=0$ and $t=2T$. Let $\mathbf C$ be its orthogonal complement inside $\tilde{\mathbf C}$, and ${\mathbf H}_t^\star$ its orthogonal complement inside $\tilde{\mathbf H}_t^\star$. With this definition, $R$ maps $\mathbf C$ to itself isometrically.

\subsubsection{Projections inside and outside $\Theta_t$}			\label{s:projections-setup}

The final ingredients needed for the iterative scheme are restrictions of Cauchy data inside and outside $\Theta$. While a hard cutoff is natural, it is not a bounded operator in energy space: a jump at $\bdy\Theta$ will have infinite energy. The natural replacements are Hilbert space projections. More generally, we consider projections inside and outside $\Theta_t$.

Let $\pi_t$, $\pi_t^\star$ be the orthogonal projections of $\mathbf C$ onto $\mathbf H_t$, $\mathbf H_t^\star$ respectively; let $\clsr\pi_t=1-\pi_t^\star$. As usual, write $\clsr\pi=\clsr\pi_0$, $\pi^\star=\pi_0^\star$. The complementary projection $I-\pi_t-\pi^\star_t$ is the orthogonal projection onto $\mathbf I_t$, the orthogonal complement to $\mathbf H_t \oplus\mathbf H_t^\star$ in $\mathbf C$.
It may be described by the following lemma, which is in essence the Dirichlet principle.
\begin{lemma}
	$\mathbf I_t$ consists of all functions of the form $(i_0,0)$, where $i_0\in H_0^1(\Upsilon)$ is harmonic in $\Upsilon\setminus\bdy\Theta_t$.
	\label{l:characterization-of-It}
\end{lemma}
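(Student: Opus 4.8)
The plan is to identify $\mathbf I_t$ as the orthogonal complement of $\mathbf H_t \oplus \mathbf H_t^\star$ inside $\mathbf C$ and to characterize that complement directly from the energy inner product. First I would recall that $\mathbf H_t = H_0^1(\Theta_t)\oplus L^2(\Theta_t)$ and $\mathbf H_t^\star$ is the part of $H_0^1(\Theta_t^\star)\oplus L^2(\Theta_t^\star)$ orthogonal to $\mathbf G$; since $\mathbf C = \mathbf G^\perp$, working inside $\mathbf C$ simply means we may equally well compute the orthogonal complement of $\mathbf H_t \oplus \tilde{\mathbf H}_t^\star$ inside $\tilde{\mathbf C}$ (the $\mathbf G$ piece is absorbed). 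So the real task is: describe all $(i_0,i_1)\in H_0^1(\Upsilon)\oplus L^2(\Upsilon)$ that are energy-orthogonal to every element of $H_0^1(\Theta_t)\oplus L^2(\Theta_t)$ and of $H_0^1(\Theta_t^\star)\oplus L^2(\Theta_t^\star)$.

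Next I would split the orthogonality condition into its two summands. Orthogonality in the second slot: $\int_\Upsilon c^{-2} i_1 \conj g_1 = 0$ for all $g_1\in L^2(\Theta_t)$ and all $g_1\in L^2(\Theta_t^\star)$. Since $\Theta_t \cup \Theta_t^\star$ differs from $\Upsilon$ only by the measure-zero set $\bdy\Theta_t$ (using the tacit Lipschitz assumption on $\Theta_t$), this forces $i_1 = 0$ a.e. Orthogonality in the first slot: $\int_\Upsilon \gradient i_0 \cdot \gradient \conj g_0 = 0$ for all $g_0 \in H_0^1(\Theta_t)$ and all $g_0 \in H_0^1(\Theta_t^\star)$. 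Testing against $g_0 \in C_c^\infty(\Theta_t)$ gives $\Delta i_0 = 0$ in the distributional sense on $\Theta_t$, and likewise on $\Theta_t^\star$; together these say $i_0$ is harmonic on $\Upsilon \setminus \bdy\Theta_t$. Conversely, if $i_0 \in H_0^1(\Upsilon)$ is harmonic on $\Upsilon\setminus\bdy\Theta_t$, then integration by parts against any $g_0\in H_0^1(\Theta_t)$ (extended by zero, so its support avoids $\bdy\Theta_t$) kills the boundary term and yields orthogonality, and similarly for $H_0^1(\Theta_t^\star)$; hence $(i_0,0)$ lies in the complement. This establishes the stated description, modulo checking that such $(i_0,0)$ actually lie in $\mathbf C$ — which holds because elements of $\mathbf G$ have vanishing gradient-energy contribution from the harmonic part in a way compatible with the decomposition, or more simply because $\mathbf I_t \subseteq \mathbf C$ by definition once we have shown $\mathbf I_t$ is the complement within $\mathbf C$.

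The one genuine subtlety — the step I expect to be the main obstacle — is the passage between "orthogonal complement inside $\mathbf C$" and "orthogonal complement inside $\tilde{\mathbf C}$," i.e. making sure the removal of $\mathbf G$ does not alter the characterization. The cleanest route is to observe that $\mathbf G \subseteq \tilde{\mathbf H}^\star \subseteq \tilde{\mathbf H}_t^\star$ for $t\ge 0$ (wave fields vanishing on $\Theta$ at $t=0$ in particular are supported in $\Theta_0^\star \subseteq \Theta_t^\star$), so $\mathbf H_t \oplus \mathbf H_t^\star$ and $\mathbf H_t \oplus \tilde{\mathbf H}_t^\star$ have the same orthogonal complement once we intersect with $\mathbf C = \mathbf G^\perp$; the harmonic functions $(i_0,0)$ are automatically orthogonal to $\mathbf G$ since $\mathbf G \subseteq \tilde{\mathbf H}_t^\star$ and we have just shown $(i_0,0)\perp \tilde{\mathbf H}_t^\star$. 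A secondary point to handle with a sentence is why $i_1 = 0$ and why the distributional harmonicity on each side glues to harmonicity on $\Upsilon \setminus \bdy\Theta_t$ — both follow from $|\bdy\Theta_t| = 0$ and the density of $C_c^\infty$ in $H_0^1$ of each piece. I would write the argument in the "$\supseteq$ then $\subseteq$" two-inclusion format, with the integration-by-parts identity $\int \gradient i_0 \cdot \gradient \conj g_0 = -\int (\Delta i_0)\conj g_0$ (valid when $\supp g_0$ avoids $\bdy\Theta_t$) as the workhorse.
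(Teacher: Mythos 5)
Your proof is correct and takes essentially the same approach as the paper's: identify $\mathbf I_t$ as the orthogonal complement of $\mathbf H_t \oplus \tilde{\mathbf H}_t^\star$ inside $\tilde{\mathbf C}$ (absorbing $\mathbf G$ via $\mathbf G \subseteq \tilde{\mathbf H}_t^\star$), deduce $i_1 = 0$ from the measure-zero boundary, and obtain (weak, hence strong) harmonicity of $i_0$ on each side from orthogonality to the first components. The paper presents the first-slot computation via a variational (Dirichlet-principle) minimum argument rather than testing against $C_c^\infty$ directly, but the resulting identity $\int_\Upsilon \nabla i_0 \cdot \nabla \phi_0 = 0$ is the same, as is the converse direction.
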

\noindent
Lemma~\ref{l:characterization-of-It} provides two useful pieces of information. First, $\mathbf I=\mathbf I_0$ is independent of $c$. Secondly, we can identify the behavior of the projections $\clsr\pi_t$, $\pi^\star_t$. Inside $\Theta_t$ the projection $\clsr\pi_t h$ equals $h$, while outside $\Theta_t$, it agrees with the $\mathbf I_t$ component of $h$, which is the harmonic extension of $h\restrictto{\bdy\Theta_t}$ to $\Upsilon$ (with zero trace on $\bdy\Upsilon$). Similarly, $\pi^\star_t h$ is zero on $\Theta_t$, and outside $\Theta_t$ equals $h$ with this harmonic extension subtracted.

It will be useful to have a name for the behavior of $\clsr\pi_t h$, and so we define the notion of \emph{stationary harmonicity:}

\begin{defn}
Cauchy data $(h_0,h_1)$ are \emph{stationary harmonic} on $W\subseteq\RR^n$ if $h_0\restrictto W$ is harmonic and $h_1\restrictto W=0$.
\end{defn}

\subsection{Scattering control}				\label{s:exact-maf}

Suppose we have Cauchy data $h_0\in\mathbf H$. We can probe $\Omega$ with $h_0$ and observe $Rh_0$ outside $\Omega$. In particular, the reflected data $\pi^\star R$ can be measured, and from these data, we would like to procure information about $c$ inside $\Omega$. However, multiple scattering as waves travel into and out of $\Omega$ makes $\pi^\star Rh_0$ difficult to interpret. 

In this section, we construct a control in $\mathbf H^\star$ that eliminates multiple scattering in the wave field of $h_0$ up to a depth $T$ inside $\Theta$. More specifically, consider the \emph{almost direct transmission} of $h_0$:

\begin{defn}
	The \emph{almost direct transmission} of $h_0\in\mathbf H$ at time $T$ is the restriction $R_Th_0\restrictto{\Theta_{T}}$.
\end{defn}

Ideally, we would like to recover (indirectly) this restricted wave field. If considered as Cauchy data on the ambient space $\Upsilon$, the almost direct transmission has infinite energy in general due to the sharp cutoff at the boundary of $\Theta_T$. As a workaround, consider the almost direct transmission's minimal-energy extension to $\Upsilon$. This involves a harmonic extension of the first component of Cauchy data:

\begin{defn}
	The \emph{harmonic almost direct transmission} of $h_0$ at time $T$ is
	\begin{equation}
		h\DT = h\DT(h_0,T)=\clsr\pi_T R_T h_0.
		\label{e:adt-def}
	\end{equation}
\end{defn}

By Lemma~\ref{l:characterization-of-It}, $h\DT$ is equal to $R_Th_0$ inside $\Theta_T$; outside $\Theta_T$, its first component is extended harmonically from $\bdy\Theta_T$, while the second component is extended by zero.

\subsubsection{Scattering control series}

Our major tool is a Neumann series, the \emph{scattering control series}
\begin{equation}
	h_\infty = \sum_{i=0}^\infty (\pi^\star R\pi^\star R)^i h_0,
	\label{e:neumann}
\end{equation}
formally solving the \emph{scattering control equation}
\begin{equation}
	(I-\pi^\star R\pi^\star R)h_\infty = h_0.
	\label{e:maf}
\end{equation}
The series in general does not converge in $\mathbf C$; but it does converge in an appropriate weighted space, as we show in Theorem~\ref{t:limit-maf}. Applying $\clsr\pi$ to~\eqref{e:neumann}, we see that $h_\infty$ consists of $h_0$ plus a control in $\mathbf H^\star$. Our first theorem characterizes the behavior of the series. 

\begin{theorem}
	Let $h_0\in\mathbf H$ and $T\in(0,\frac12\diam\Theta)$. Then isolating the deepest part of the wave field of $h_0$ is equivalent to summing the scattering control series:
	\begin{nalign}
			(I-\pi^\star R\pi^\star R)h_\infty = h_0
		&\iff
			R_{-T} \clsr\pi R_{2T}h_\infty = h\DT
			\text{ and }
			h_\infty \in h_0+\mathbf H^\star.
		\label{e:basic-maf-behavior}
	\end{nalign}
	Above, $R_{-T} \clsr\pi R_{2T}h_\infty$ may also be replaced by $R_{-s} \clsr\pi_{T-s} R_{T+s}h_\infty$ for any $s\in[0,T]$.
	
	Such an $h_\infty$, if it exists, is unique in $\mathbf C$. As for the harmonic extension in $h\DT$, it is equal to $\clsr\pi R_{2T}h_\infty$ outside $\Theta$:
	\begin{align}
		h\DT\big|_{\Theta^\star} &= j_0\big|_{\Theta^\star},
		&
		\text{where\quad}\clsr\pi R_{2T}h_\infty = (j_0,j_1),
		\label{e:harmonic-ext-identity}
	\end{align}
	and is bounded:
	\begin{equation}
		\En_{\Theta_T^\star}(h\DT) \leq C\norm{h_0}
		\label{e:harmonic-ext-bound}
	\end{equation}
	for some $C=C(c,T)$ independent of $h_0$.
	\label{t:basic-maf}
\end{theorem}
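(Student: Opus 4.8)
The plan is to establish the equivalence in~\eqref{e:basic-maf-behavior} by unwinding the two sides in terms of the geometry of wave propagation, then deduce uniqueness, the harmonic-extension identity, and the bound as corollaries. First I would rewrite the scattering control equation~\eqref{e:maf} as $\clsr\pi^\star R\clsr\pi^\star R h_\infty = (\clsr\pi^\star R \pi^\star R + \pi^\star R\pi^\star R)h_\infty$ is not the cleanest route; instead, the key algebraic observation is that $(I-\pi^\star R\pi^\star R)h_\infty = h_0$ with $h_0 \in \mathbf H$ forces, upon applying $\pi^\star$, that $\pi^\star h_\infty = \pi^\star R\pi^\star R h_\infty$, i.e. $\clsr\pi^\star h_\infty$ (the ``tail'') is determined and $h_\infty \in h_0 + \mathbf H^\star$ is immediate since $\clsr\pi h_\infty = h_0 + \clsr\pi\pi^\star(\cdots)$ lands in $\mathbf H$ up to an $\mathbf H^\star$-piece; the careful bookkeeping here uses $\clsr\pi = I - \pi^\star$ and that $\clsr\pi$ restricted to $\mathbf H$ is the identity. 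The substantive direction is to show that the equation is equivalent to the wave field $R_{2T}h_\infty$ being, after time-reversal and propagation back, stationary harmonic outside $\Theta_T$ and equal to the almost direct transmission inside — this is where finite speed of propagation and the unique continuation hypothesis from~\sref{s:uc} enter.

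The core of the argument is a \emph{depth-tracking} lemma: I would show that $w := R_{T+s}h_\infty$ has the property that $\clsr\pi_{T-s} w$ is independent of $s \in [0,T]$ after propagating it back to $t=0$ via $R_{-s}$ (this is the ``may be replaced by'' clause), and the way to see it is a layer-by-layer (or rather, a single differential/telescoping) argument. Concretely, the operator $\pi^\star R\pi^\star R$ should be interpreted as: propagate forward $2T$, reverse time, kill everything inside $\Theta$, propagate forward $2T$ again, reverse, kill inside $\Theta$. The Neumann series~\eqref{e:neumann} then iteratively cancels the multiply-reflected wavefronts that re-enter $\Theta$, and each term in the series corresponds to one more ``bounce''. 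Because $T < \frac12\diam\Theta$ and the control is supported in $\Upsilon$ with $d(\bdy\Upsilon,\clsr\Theta) > 2T$, no spurious interaction from $\bdy\Upsilon$ occurs within the relevant time window — this is the geometric input that makes the cancellation exact rather than merely microlocal. I would make the telescoping precise by writing $R_{-s}\clsr\pi_{T-s}R_{T+s}h_\infty - R_{-(s+\delta)}\clsr\pi_{T-s-\delta}R_{T+s+\delta}h_\infty$ and showing it vanishes using the equation~\eqref{e:maf}, domain-of-dependence, and Lemma~\ref{l:characterization-of-It} to handle the harmonic-extension parts of the projections.

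For uniqueness in $\mathbf C$: if $h_\infty$ and $h_\infty'$ both solve~\eqref{e:maf} with the same $h_0$, their difference $g \in \mathbf H^\star$ satisfies $(I - \pi^\star R\pi^\star R)g = 0$, so $\pi^\star R\pi^\star R g = g$; since $R$ is an isometry on $\mathbf C$ and $\pi^\star$ is a projection, $\|\pi^\star R\pi^\star R g\| \le \|g\|$ with equality forcing $\pi^\star R g = Rg$ (no energy lost) and $\pi^\star R\pi^\star R g = R\pi^\star R g$, chained down to show the wave field of $g$ must avoid $\Theta$ at all the relevant times; then the equivalence just proven gives $h\DT(g\text{'s }h_0\text{-part}, T) = 0$, but $g$'s $h_0$-part is $0$, and unique continuation upgrades ``wave field vanishes on $\Theta$ at $t=0,2T$'' plus ``stationary harmonic outside'' to $g \in \mathbf G$, whence $g = 0$ in $\mathbf C$ by definition of $\mathbf C$ as the complement of $\mathbf G$. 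The identity~\eqref{e:harmonic-ext-identity} then follows because, by the equivalence, $R_{-T}\clsr\pi R_{2T}h_\infty = h\DT$ agrees with $R_Th_0$ inside $\Theta_T$ and is stationary harmonic (hence equals its own harmonic extension, i.e. the $\mathbf I$-component of $\clsr\pi R_{2T}h_\infty$ propagated back) outside; restricting to $\Theta^\star \subseteq \Theta_T^\star$ and using that $R_{-T}$ acts as the identity on stationary-harmonic data (its second component is zero and the first is time-independent for the harmonic piece — here I would double-check via~\eqref{e:wave-ivp} that stationary harmonic Cauchy data are genuinely stationary under $F$, which is exactly why the term ``stationary harmonic'' was coined) yields $h\DT|_{\Theta^\star} = j_0|_{\Theta^\star}$. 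Finally~\eqref{e:harmonic-ext-bound} is a boundedness statement: $\En_{\Theta_T^\star}(h\DT) \le \En_\Upsilon(\clsr\pi_T R_T h_\infty)$ — no, more carefully, $h\DT = \clsr\pi_T R_T h_0$ and one needs $\|R_Th_0\|$ controlled, but $R_T$ is an isometry on the full energy space only up to the time-reversal normalization; the bound $C(c,T)$ absorbs the operator norm of $\clsr\pi_T R_T$ on $\mathbf C$, which is finite since both factors are bounded (projections are norm $\le 1$, $R_T$ is bounded on $H^1 \oplus L^2$ with norm depending on $c,T$).

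\textbf{Main obstacle.} I expect the hardest step to be the rigorous telescoping/induction showing the equivalence with the almost direct transmission and the $s$-independence — in particular, justifying that the hard cutoffs implicit in the projections $\pi^\star, \clsr\pi_{T-s}$ interact correctly with finite speed of propagation when the relevant boundaries $\bdy\Theta_t$ are only assumed Lipschitz, and that the harmonic-extension ``leakage'' outside $\Theta_t$ (from Lemma~\ref{l:characterization-of-It}) does not spoil the cancellation. This is really the content of the theorem, and I anticipate it requires a delicate domain-of-dependence argument combined with the unique continuation hypothesis to close the loop, rather than any single slick identity. The remaining items (uniqueness, the harmonic identity, the bound) should follow relatively mechanically once the equivalence and the isometry property of $R$ on $\mathbf C$ are in hand.
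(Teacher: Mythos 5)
Your proposal correctly identifies the main ingredients — finite speed of propagation, the unique continuation hypothesis, Lemma~\ref{l:characterization-of-It}, and the isometry/projection structure for uniqueness — and your uniqueness sketch and the bound~\eqref{e:harmonic-ext-bound} are essentially the paper's argument (in fact $R_T$ is an exact isometry in energy norm, so $C$ can be taken independent of $R_T$). But there is a genuine gap in the core step.

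Your ``telescoping'' device — showing
$R_{-s}\clsr\pi_{T-s}R_{T+s}h_\infty - R_{-(s+\delta)}\clsr\pi_{T-s-\delta}R_{T+s+\delta}h_\infty = 0$
by a differential argument in $s$ — is circular. To conclude that the shell between $\Theta_{T-s}$ and $\Theta_{T-s-\delta}$ carries no wave field (other than a stationary harmonic piece) you already need to know the wave field is stationary harmonic there, and establishing that \emph{is} the unique continuation step. The mechanism the paper actually uses is a single light-cone-shaped unique continuation argument from data at $t=0$ and $t=2T$ (Lemma~\ref{l:uc-top-and-bottom}), which produces the stationary-harmonicity on $\Theta_{T-s}^\star$ for \emph{all} $s\in[0,T]$ simultaneously; the $s$-replacement is then an immediate byproduct rather than something proved separately by iteration.

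There is also a specific trick you miss that is needed to make unique continuation applicable at all. With $v$ the wave field of $\clsr\pi R_{2T}h_\infty$, the Cauchy data of $v$ at $t=0,2T$ are \emph{not} zero on $\Theta^\star$ — they are only stationary harmonic (nonzero harmonic first component, vanishing second). Unique continuation as stated in \sref{s:uc} requires vanishing Cauchy data. The paper resolves this by applying Lemma~\ref{l:uc-top-and-bottom} to $\d_t v$ (or a mollified surrogate $\rho'_\eps(t)\ast v$), which \emph{does} vanish on $\Theta^\star$ at $t=0,2T$ precisely because $v$ is stationary harmonic there. This differentiation-in-time is essential and is not in your proposal; without it neither your telescoping nor a direct application of UC to $v$ closes. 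Finally, your write-up blurs the two directions of the equivalence: the ($\Leftarrow$) direction is in fact a short algebraic argument (show $\pi^\star R_{-2T}\clsr\pi R_{2T}h_\infty = 0$ by FSP from the stationary-harmonicity of $h\DT$ on $\Theta_T^\star$, then expand $\pi^\star R(\pi^\star+\clsr\pi)R$) that should be given separately from the ($\Rightarrow$) direction rather than folded into a single ``depth-tracking'' narrative.
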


Equation~\eqref{e:basic-maf-behavior} tells us that the wave field created by $h_\infty$ inside $\Theta$ at $t=2T$ is entirely due to the harmonic almost direct transmission at $t=T$ (Figure~\ref{f:basic-maf}). More generally, the wave field of $h_\infty$ agrees with that of $h\DT$ on its domain of influence. This is not true of $h_0$'s wave field, where other waves, including multiple reflections, will pollute the wave field at time $2T$. It follows that the tail $h_\infty-h_0$ enters $\Omega$ and carries all of the scattered energy of $h_0$ out with it. We will see this from an energy standpoint in Section~\ref{s:energy} and from a microlocal (geometric optics) standpoint in Section~\ref{s:microlocal}. 

\begin{figure}[tb]
	\centering
	\includegraphics{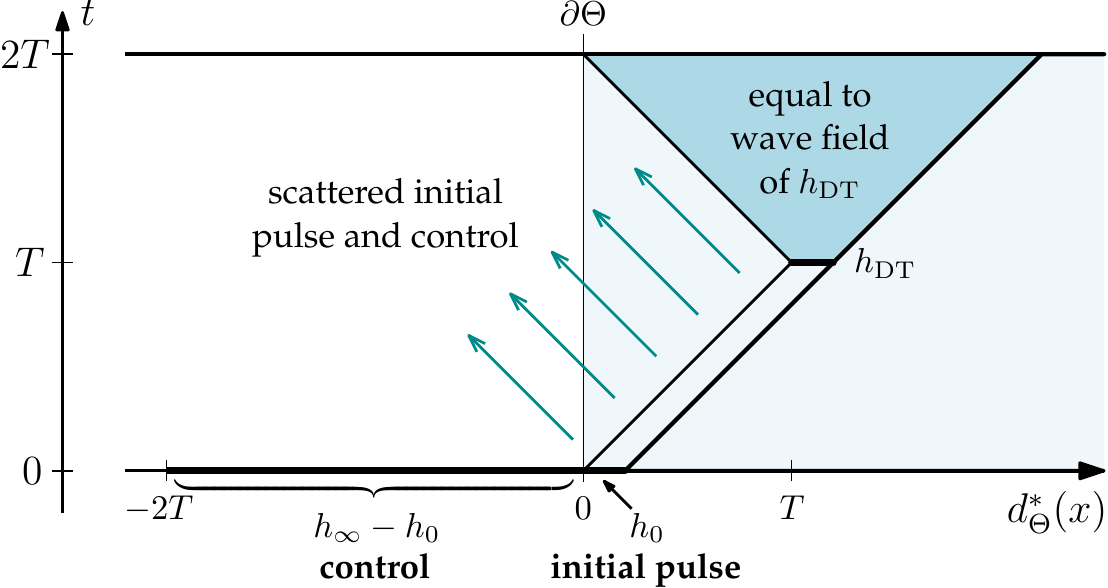}
	\caption{Illustration of the wave field generated by scattering control, as given by Theorem~\ref{t:basic-maf}.}
	\label{f:basic-maf}
\end{figure}

The question now is to study whether the Neumann series~\eqref{e:neumann} converges at all. Since $R$ is an isometry and $\pi^\star$ a projection, we have $\norm{\pi^\star R\pi^\star R}\leq 1$. From our later spectral characterization, we know that $\norm{\pi^\star R h}<\norm h$, strictly, for all $h\in\mathbf H^\star$. This is also true for a completely trivial reason: we eliminated $\mathbf G$ when constructing $\mathbf C$. What hinders convergence is that $\norm{h}-\norm{\pi^\star Rh}$ might be arbitrarily small; in other words, almost all the energy could be reflected off $\Theta$. Note that if the series fails to converge, no other finite energy control in $\mathbf H^\star$ can isolate the harmonic almost direct transmission of $h_0$; see Proposition~\ref{p:only-neumann}.

In the next theorem, we investigate convergence via the spectral theorem. It turns out that the only problem is outside $\Theta$; inside $\Theta$ the partial sums' wave fields at $t=2T$ do converge, and their energies are in fact monotonically decreasing. We will also demonstrate that the Neumann series converges in $\mathbf H$ for a dense set of $h_0$, and identify a larger space in which the Neumann series converges for any $h_0$.

For the statement of the theorem, define $\mathbf J$ to be the following space of Cauchy data, which, roughly speaking, remains completely inside or completely outside $\Theta$ in time $2T$:
\begin{equation}
	\mathbf J=\big(\mathbf H\cap R(\mathbf H)\big) \oplus \big(\mathbf H^\star \cap R(\mathbf H^\star)\big).
\end{equation}
Let $\chi\colon\mathbf C\to\mathbf J$ be the orthogonal projection onto $\mathbf J$.

\begin{theorem}
	With $h_0, T$ as in Theorem~\ref{t:basic-maf}, define the partial sums
	\begin{equation}
		h_k = \sum_{i=0}^k (\pi^\star R\pi^\star R)^i h_0.
		\label{e:partial-sums}
	\end{equation}
	Then the deepest part of the wave field can be (indirectly) recovered from $\{h_k\}$ regardless of convergence of the scattering control series:
	\begin{align}
		\lim_{k\to\infty} R_{-T}\clsr\pi R_{2T}h_k &=  R_T\chi h_0 = h\DT,
		&
		\norm{\clsr\pi Rh_k}&\searrow\norm{h\DT}\!.
		\label{e:maf-interior-limit}
	\end{align}
	The set of $h_0$ for which the scattering control series converges in $\mathbf C$,
	\begin{equation}
		\mathcal Q = \set{h_0\in\mathbf H}{(I-\pi^\star R\pi^\star R)^{-1} h_0\in\mathbf C}\!,
		\label{e:Q-definition}
	\end{equation}
	is dense in $\mathbf H$.
	For all $h_0\in\mathbf H$, the partial sum tails $h_k-h_0$ converge in a weighted space that can be formally written as
	\begin{align}
		&\frac{I}{\sqrt{I-N^2}}(1-\chi)\mathbf C,
		&
		N &= \clsr\pi R\clsr\pi + \pi^\star R\pi^\star.
		\label{e:space-of-maf-convergence}
	\end{align}
	\label{t:limit-maf}
\end{theorem}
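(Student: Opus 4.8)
The plan is to analyze the self-adjoint operator $N = \clsr\pi R\clsr\pi + \pi^\star R\pi^\star$ on $\mathbf C$, reduce the scattering control series to a functional-calculus statement about $N$, and then read off all three assertions. First I would record that $\pi^\star R\pi^\star R = \pi^\star R(I-\clsr\pi)R$, and using $R^2 = I$ (since $R=\nu R_{2T}$ is an involutive isometry on $\mathbf C$ — this should be checked, or replaced by the statement that $R$ is unitary with $R^{-1}=\nu R_{-2T}\nu$) rewrite $I - \pi^\star R\pi^\star R$ in terms of $N$. The key algebraic identity I expect is something like $(I-\pi^\star R\pi^\star R)$ acting on the relevant subspace equals (a piece built from) $I - N^2$, so that the Neumann series $\sum (\pi^\star R\pi^\star R)^i h_0$ is, up to bookkeeping with $\clsr\pi$ and $\pi^\star$, the functional calculus element $(I-N^2)^{-1}$ applied to $h_0$. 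Establishing this identity cleanly — keeping careful track of which projections land where, and of the role of $\mathbf G$ having been quotiented out so that $\|\pi^\star R h\| < \|h\|$ strictly on $\mathbf H^\star$ — is the technical heart.

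Next I would handle the interior limit \eqref{e:maf-interior-limit}. Writing $h_k$ as the $k$-th partial sum, I want to show $\clsr\pi R h_k \to \clsr\pi R \chi h_0$. The idea: the telescoping/Neumann structure gives $\clsr\pi R h_k = \clsr\pi R h_0 + (\text{correction})$, and the correction, expressed through $N$, is $p_k(N)$ applied to something, where $p_k$ are the partial sums of the geometric-type series for $(I-N^2)^{-1}$ composed with the spectral projection. Since $\|N\|\le 1$ and the only obstruction to convergence is spectrum accumulating at $\pm 1$, the spectral theorem gives convergence of $\clsr\pi R h_k$ to the component supported where $|N|<1$ — and I would identify the complementary part, the part of $h_0$ fixed by $N^2$ (equivalently by $\clsr\pi R\clsr\pi$ and $\pi^\star R\pi^\star$ simultaneously) with $\chi h_0$, using the definition $\mathbf J = (\mathbf H\cap R\mathbf H)\oplus(\mathbf H^\star\cap R\mathbf H^\star)$. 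Then $R_{-T}\clsr\pi R_{2T}h_k \to R_{-T}\clsr\pi R_{2T}\chi h_0$, and combining with Theorem~\ref{t:basic-maf} (its equivalence \eqref{e:basic-maf-behavior} applied with $h_\infty$ replaced on the nose by the spectral-calculus limit) identifies this with $h\DT$; the equality $R_T\chi h_0 = h\DT$ should then be a short consequence, possibly needing the $s$-parameter flexibility in Theorem~\ref{t:basic-maf}. Monotonic decrease $\|\clsr\pi R h_k\|\searrow\|h\DT\|$ I expect to follow from the Neumann partial sums being built from a positive operator: $\|\clsr\pi R h_k\|^2 = \langle q_k(N^2) h_0, h_0\rangle$ with $q_k$ a decreasing sequence of functions on $[0,1)$, hence monotone by the spectral theorem.

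For density of $\mathcal Q$ in $\mathbf H$: $h_0\in\mathcal Q$ precisely when $(I-N^2)^{-1}h_0$ lies in $\mathbf C$, i.e.\ when $\int (1-\lambda^2)^{-2}\,d\langle E_\lambda h_0,h_0\rangle < \infty$ where $E$ is the spectral measure of $N^2$ (or $N$). The set of $h_0$ whose spectral measure is supported away from $\{|N|=1\}$ is dense — this is the standard fact that vectors with spectral measure supported in $[0,1-\varepsilon]$, unioned over $\varepsilon$, are dense, provided $1$ is not an eigenvalue... but $1$ may be an eigenvalue (that's exactly $\mathbf J$), so I must intersect with $\mathbf H$ and argue that $\mathbf H\cap(\text{eigenspace at }1)$ can be avoided densely, or rather that $\mathcal Q$ contains a dense subspace of $\mathbf H$; the cleanest route is to show $\mathcal Q\supseteq \bigcup_\varepsilon (\text{range of }E[0,1-\varepsilon])\cap\mathbf H$ and that this is dense in $\mathbf H$ because $\mathbf J\cap\mathbf H = \mathbf H\cap R\mathbf H$ has the property that its orthogonal complement within $\mathbf H$ is reached in the limit — here I would want a small lemma that $\mathbf H$ is not swallowed by the top of the spectrum. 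Finally, the weighted-space convergence \eqref{e:space-of-maf-convergence}: the tails $h_k - h_0 = \sum_{i=1}^k(\pi^\star R\pi^\star R)^i h_0$ live in $\mathbf H^\star$ and, via the same functional calculus, converge in the norm $\|(I-N^2)^{-1/2}\,\cdot\,\|$ on $(1-\chi)\mathbf C$; I would define that weighted space as the completion of $\operatorname{ran}(I-\chi)$ under $\|(I-N^2)^{1/2}v\| \mapsto \|v\|$ — matching the formal expression $\frac{I}{\sqrt{I-N^2}}(1-\chi)\mathbf C$ — and check Cauchyness of the partial-sum tails there using $\|p_k(N) - p_{k'}(N)\|$ estimates that are finite precisely because the $(I-N^2)^{-1/2}$ weight tames the $\lambda\to 1$ behavior.

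The main obstacle I anticipate is the first step: pinning down the exact operator identity relating $\pi^\star R\pi^\star R$ to $N^2$ (and the precise subspace on which it holds), because the three projections $\clsr\pi, \pi^\star, \chi$ interact subtly and $R$ mixes $\mathbf H$ with $\mathbf H^\star$; everything downstream is spectral-theorem bookkeeping once that identity and the strict inequality $\|\pi^\star R|_{\mathbf H^\star}\|_{\text{ess}}$-type control are in hand. I would also be wary that $R^2=I$ may fail and one only has unitarity, in which case $N$ should be symmetrized appropriately (e.g.\ work with $\clsr\pi R\clsr\pi + \pi^\star R\pi^\star$ together with its adjoint, or note $N$ is already self-adjoint because $R$ restricted to each of $\mathbf H,\mathbf H^\star$ composed with the projection back is self-adjoint thanks to the time-reversal $\nu$).
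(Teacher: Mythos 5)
Your overall strategy — diagonalize $N = \clsr\pi R\clsr\pi + \pi^\star R\pi^\star$ by the spectral theorem and read everything off from multiplication operators — is exactly the paper's approach, and the algebra you anticipate (including $R^2=I$, which does hold since $R$ is both unitary and self-adjoint) is correct in outline. However, two of your planned steps have genuine gaps, and a third ingredient is missing that you would need to make the ``bookkeeping'' precise.

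First, the missing ingredient: you never introduce the complementary operator $Z=\pi^\star R\clsr\pi+\clsr\pi R\pi^\star$. The identity $N^2+Z^2=I$ on each of $\mathbf H\oplus\mathbf I$ and $\mathbf H^\star$, together with the anticommutation $NZ=-ZN$, is what lets you collapse the Neumann partial sum to the explicit multiplication formula $h_k-h_0 = n\,\frac{1-n^{2k}}{1-n^2}\,Zh_0$ and then commute $Z$ past powers of $n$. Without $Z$, statements like ``the tail is $(I-N^2)^{-1}h_0$ up to bookkeeping'' don't quite close: the tail actually involves $Zh_0$, and since $Z$ and $N$ do not commute, $Zh_0$ is not a spectral multiplier of $h_0$. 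This is also why your spectral-integrability criterion for $\mathcal Q$ carries the wrong weight: using $\|Zj\|^2=\int(1-n^2)|j|^2\,d\mu$, the correct condition is $\int \frac{n^2}{1-n^2}|(1-\chi)h_0|^2\,d\mu<\infty$, not $\int(1-\lambda^2)^{-2}\,d\langle E_\lambda h_0,h_0\rangle<\infty$.

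Second, the density of $\mathcal Q$: your plan to approximate $h_0$ by vectors whose spectral measure ``avoids'' $\{|N|=1\}$ cannot converge to $h_0$ whenever $\chi h_0\neq 0$, because you are throwing away the $\mathbf J$-component. You correctly sense the trouble but the repair you sketch (avoiding the eigenspace densely within $\mathbf H$) is the wrong move. The resolution is the opposite: \emph{include} the full eigenspace $n^{-1}(\{\pm1\})$ in every approximant. On that set $1-\chi=0$, so the potentially divergent multiplier $n\frac{1-n^{2k}}{1-n^2}(1-\chi)$ is identically zero there; and on the remaining shell $|n|<1-2^{-i-1}$ it is uniformly bounded. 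Thus truncating the spectral support of $h_0$ to $n^{-1}(\{-1,0,1\})\cup\{|n|<1-2^{-i-1}\}$ produces vectors in $\mathcal Q$ (via the explicit formula, not via avoiding the eigenspace) that actually converge to $h_0$.

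Third, the identification $R_T\chi h_0 = h\DT$: you propose to read this off by applying Theorem~\ref{t:basic-maf} ``with $h_\infty$ replaced on the nose by the spectral-calculus limit.'' That step fails precisely in the interesting case, because the spectral-calculus limit $h_\infty$ need not lie in $\mathbf C$, and Theorem~\ref{t:basic-maf} says nothing outside $\mathbf C$. The paper's route is to prove density of $\mathcal Q$ \emph{first}, use Theorem~\ref{t:basic-maf} only on $\mathcal Q$, and then extend $R\chi h_0 = h\DT$ to all of $\mathbf H$ by continuity of both $h_0\mapsto R\chi h_0$ and $h_0\mapsto h\DT$ (the latter via the bound $\En_{\Theta_T^\star}(h\DT)\le C\norm{h_0}$ from Theorem~\ref{t:basic-maf}). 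You would need to reorder your proof and add this limiting argument.
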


As an immediate corollary of~\eqref{e:maf-interior-limit}, we recover in the limit the wave field generated by the harmonic almost direct transmission outside $\Theta$, using only observable data.

\begin{corollary}
	Let $F\DT(t,x) = (Fh\DT)(t-T,x)$ be the harmonic almost direct transmission's wave field. Then
	\begin{align}
		(Fh_k)(t,x) - (F\pi^\star R_{2T}h_k)(t-2T,x)&\to F\DT(t,x)
		&
		\text{ as } k\to\infty,
	\end{align}
	the convergence being $H^1$ in space, uniformly in $t$.
	\label{c:dt-wave field}
\end{corollary}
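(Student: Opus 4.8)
The plan is to derive the corollary directly from the interior-limit statement \eqref{e:maf-interior-limit} of Theorem~\ref{t:limit-maf}, together with basic properties of the wave propagator. First I would rewrite the quantity whose limit we want in terms of Cauchy data rather than wave fields. Since $F h_k$ is the wave field of $h_k$ and $F\pi^\star R_{2T} h_k$ shifted by $2T$ is the wave field of the time-reversed reflected data, and since $\clsr\pi = 1-\pi^\star$, the combination $(Fh_k)(t,\cdot) - (F\pi^\star R_{2T}h_k)(t-2T,\cdot)$ should, after applying $R_{-t}$ or comparing Cauchy data at a common time, be expressible through $\clsr\pi R_{2T} h_k$ (or its time-reversal), which by \eqref{e:maf-interior-limit} converges. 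The key point is that outside $\Theta$ one has, from \eqref{e:maf-interior-limit} and the definition $R = \nu\circ R_{2T}$, that $\clsr\pi R h_k \to h\DT$ in energy norm; unwinding the time reversal $\nu$ turns this into convergence of the relevant Cauchy data at $t=2T$.

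Next I would use finite speed of propagation to localize. The wave field $F\DT(t,x) = (Fh\DT)(t-T,x)$ is, by definition, generated at $t=T$ by $h\DT = \clsr\pi_T R_T h_0$. By \eqref{e:basic-maf-behavior}, $R_{-T}\clsr\pi R_{2T} h_\infty = h\DT$, i.e.\ the wave field of $h_\infty$ at time $2T$, projected by $\clsr\pi$ and propagated back to time $T$, is exactly $h\DT$; and \eqref{e:maf-interior-limit} says the analogous expression for the partial sums $h_k$ converges to $h\DT$ in $\mathbf C$ as $k\to\infty$. Since $R_{-T}$, $\clsr\pi$, $R_{2T}$ are all bounded on $\mathbf C$ (the propagators are isometries, $\clsr\pi = 1-\pi^\star$ a bounded projection), the convergence in \eqref{e:maf-interior-limit} is genuine norm convergence of Cauchy data; then applying $F$ and $R_s$ — bounded from $\mathbf C$ into $C(\RR, H^1)$ uniformly on compact time intervals — promotes this to $H^1$-in-space, uniform-in-$t$ convergence of the associated wave fields. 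The subtraction of $(F\pi^\star R_{2T}h_k)(t-2T,x)$ is precisely what removes the $\pi^\star$-part, leaving the $\clsr\pi$-part, whose wave field converges to $F\DT$.

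The main obstacle I anticipate is bookkeeping the two time shifts and the time-reversal $\nu$ so that everything is compared at a single reference time, and checking that the algebraic identity
\[
  (Fh_k)(t,x) - (F\pi^\star R_{2T}h_k)(t-2T,x) = \big(F(R_{-T}\clsr\pi R_{2T}h_k)\big)(t-T,x)
\]
holds as wave fields (not just as Cauchy data at one instant) — this needs the group property $F(R_s g) = (Fg)(\cdot + s, \cdot)$ and the fact that $\clsr\pi R_{2T} h_k$ and $\pi^\star R_{2T} h_k$ sum to $R_{2T} h_k$, so the two wave fields differ by that of the complementary projection. Once that identity is in hand, \eqref{e:maf-interior-limit} gives convergence of the Cauchy data $R_{-T}\clsr\pi R_{2T} h_k \to h\DT$ in $\mathbf C$, and continuity of $F$ (uniformly on the compact $t$-interval of interest, by finite propagation speed and the energy estimate) finishes the proof. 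A minor point to address is that the convergence in \eqref{e:maf-interior-limit} is stated after applying $R_{-T}\clsr\pi R_{2T}$, which is exactly the operator appearing here, so no further manipulation of the possibly-divergent series \eqref{e:neumann} is needed.
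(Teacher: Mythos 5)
Your proposal is correct and follows the paper's intended route exactly: the paper calls Corollary~\ref{c:dt-wave field} an ``immediate corollary'' of~\eqref{e:maf-interior-limit} and gives no separate proof, and your elaboration supplies precisely the missing bookkeeping, namely the identity $(Fh_k)(t,x) - (F\pi^\star R_{2T}h_k)(t-2T,x) = (F\clsr\pi R_{2T}h_k)(t-2T,x) = \big(F(R_{-T}\clsr\pi R_{2T}h_k)\big)(t-T,x)$ from $\clsr\pi+\pi^\star=I$ and the group property $F(R_s g)(t,\cdot)=(Fg)(t+s,\cdot)$, followed by continuity of $F\colon\mathbf C\to C(\RR,H^1)$ applied to the convergence $R_{-T}\clsr\pi R_{2T}h_k\to h\DT$. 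This is the canonical argument and matches the paper.
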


We end this section with three small propositions. 
The first states that the scattering control equation has no solution if the Neumann series diverges.
\begin{proposition}
	Let $h_0, T$ be as in Theorem~\ref{t:basic-maf}, and suppose $(I-\pi^\star R\pi^\star R)k=h_0$ for some $k\in \mathbf H^*$. Then the scattering control series~\eqref{e:neumann} converges.
	\label{p:only-neumann}
\end{proposition}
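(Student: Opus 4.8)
The plan is to reduce both the hypothesis and the desired conclusion to one and the same condition on the scalar spectral measure of the single vector $g:=\pi^\star R\pi^\star Rh_0$ relative to a self-adjoint contraction, and then to quote the spectral theorem.

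First I would record the algebraic facts. Since $R=\nu R_{2T}$ and $\nu R_{2T}\nu=R_{-2T}=R_{2T}^{-1}$ one has $R^2=I$; being also an isometry of $\mathbf C$ onto itself, $R$ is a self-adjoint unitary, so $B:=\pi^\star R\pi^\star$ is self-adjoint on $\mathbf C$ with $\norm B\le1$, and since $\mathbf H^\star=\operatorname{ran}\pi^\star$ is $B$-invariant, $B|_{\mathbf H^\star}$ is a self-adjoint contraction of $\mathbf H^\star$. The one computation of substance is that $M:=\pi^\star R\pi^\star R$ agrees with $B^2$ on $\mathbf H^\star$: if $h\in\mathbf H^\star$ then $\pi^\star h=h$ and $Bh=\pi^\star Rh\in\mathbf H^\star$, whence $B^2h=\pi^\star R\pi^\star(Bh)=\pi^\star R\pi^\star Rh=Mh$. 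As $\operatorname{ran}M\subseteq\mathbf H^\star$, in particular $g=Mh_0\in\mathbf H^\star$, so (iterating $M|_{\mathbf H^\star}=B^2$)
\[
	\sum_{i=0}^{\infty}M^{i}h_0 \;=\; h_0+\sum_{j=0}^{\infty}M^{j}g \;=\; h_0+\sum_{j=0}^{\infty}B^{2j}g ,
\]
and hence the scattering control series~\eqref{e:neumann} converges in $\mathbf C$ if and only if $\sum_{j\ge0}B^{2j}g$ converges in $\mathbf H^\star$.

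Next I would unpack the hypothesis in the same language. Applying $\clsr\pi=I-\pi^\star$ to $(I-M)k=h_0$ and using $\clsr\pi\pi^\star=0$ together with $\clsr\pi h_0=h_0$ (valid since $h_0\in\mathbf H$) gives $\clsr\pi k=h_0$, so $k=h_0+k^\star$ with $k^\star:=\pi^\star k\in\mathbf H^\star$; substituting back and using $Mh_0=g$ and $Mk^\star=B^2k^\star$ leaves $(I-B^2)k^\star=g$. Thus the hypothesis is precisely that $g$ lies in the range of $I-B^2|_{\mathbf H^\star}$. Finally, let $\mu$ be the scalar spectral measure of $g$ for $B|_{\mathbf H^\star}$, supported in $[-1,1]$; passing to the cyclic subspace $g$ generates (whose orthocomplement is $B$-invariant and so contributes nothing to $(I-B^2)k^\star=g$), $g$ becomes the constant $1$ and $B$ becomes multiplication by $\lambda$, so solvability of $(I-B^2)k^\star=g$ is equivalent to $(1-\lambda^2)^{-1}\in L^2(\mu)$ --- in particular $\mu(\{\pm1\})=0$. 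But that is exactly the condition under which the partial sums $\sum_{j=0}^{k}\lambda^{2j}=\tfrac{1-\lambda^{2(k+1)}}{1-\lambda^2}$, being nonnegative, increasing in $k$ and bounded $\mu$-a.e.\ by $(1-\lambda^2)^{-1}$, converge in $L^2(\mu)$ by dominated convergence; equivalently, $\sum_{j\ge0}B^{2j}g$ converges in $\mathbf H^\star$. Combined with the previous paragraph, the scattering control series converges.

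The only genuinely substantive step, I expect, is the identity $M|_{\mathbf H^\star}=B^2$ with $B$ self-adjoint: it is what makes the solvability of the scattering control equation and the convergence of the Neumann series be governed by the same integrability condition on $\mu$. Everything else --- the splitting $k=h_0+k^\star$, the harmless discarding of the non-cyclic part of $k^\star$, and the dominated-convergence argument near $\lambda=\pm1$ --- is routine bookkeeping.
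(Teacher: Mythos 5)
Your proposal is correct and follows essentially the same route as the paper: both reduce to the self-adjoint contraction $\pi^\star R\pi^\star$ restricted to $\mathbf H^\star$, reformulate the hypothesis as solvability of $(I-B^2)k^\star=g$ with $g=\pi^\star R\pi^\star Rh_0\in\mathbf H^\star$, and then invoke the spectral theorem together with a monotone/dominated-convergence argument near $\lambda=\pm1$ (the paper isolates this as Lemma~\ref{l:only-neumann}). You are somewhat more explicit about the identity $M|_{\mathbf H^\star}=B^2$ and about passing to the cyclic subspace of $g$, points the paper leaves implicit, but the core argument is the same.
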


The second proposition characterizes the space $\mathbf H^\star$ containing the Cauchy data controls. Essentially, each control is supported in a $2T$-neighborhood of $\Theta$ and its wave field is contained in this neighborhood for $t\in[0,2T]$, up to harmonic functions.

\begin{proposition}
	\nobelowdisplayskip
	The control space $\mathbf H^\star$ consists of Cauchy data supported outside $\Theta$ whose wave fields are stationary harmonic outside a $2T$-neighborhood of $\Theta$ at $t=0,2T$:
	\begin{equation}
		\mathbf H^\star = \set{h\in\tilde{\mathbf C}}{\pi_{-2T}^\star h = \pi_{-2T}^\star R^{}_{2T} h = \clsr\pi h = 0}\!.
		\label{e:H-star-characterization}
	\end{equation}
	\label{p:H-star-characterization}
\end{proposition}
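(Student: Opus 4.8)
The plan is to strip the three conditions off in turn; the first is soft, while the other two encode the definition of $\mathbf G$. To begin, I would dispose of $\clsr\pi h=0$. By Lemma~\ref{l:characterization-of-It} and the description of $\clsr\pi_0$ that follows it, $\clsr\pi h$ agrees with $h$ on $\Theta$, carries $h_1$ (extended by zero) as its velocity component off $\Theta$, and carries the harmonic extension of $h_0\restrictto{\bdy\Theta}$ as its first component off $\Theta$; hence $\clsr\pi h=0$ exactly when $h_0\in H^1_0(\Upsilon\setminus\clsr\Theta)$ and $h_1\in L^2(\Upsilon\setminus\clsr\Theta)$, i.e.\ when $h\in\tilde{\mathbf H}^\star$. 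Since $\mathbf H^\star=\tilde{\mathbf H}^\star\ominus\mathbf G\subseteq\tilde{\mathbf H}^\star$, the condition $\clsr\pi h=0$ is automatic on both sides of the asserted identity, and what remains is to show that for $h\in\tilde{\mathbf H}^\star$ one has $h\perp\mathbf G$ if and only if $\pi^\star_{-2T}h=0$ and $\pi^\star_{-2T}R_{2T}h=0$.

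Next I would translate the two projection conditions into orthogonality relations. Put $\mathbf K:=\tilde{\mathbf H}_{-2T}^\star=H^1_0(\Theta^\star_{-2T})\oplus L^2(\Theta^\star_{-2T})$, and note $\Theta^\star_{-2T}\subseteq\Theta^\star_0=\Upsilon\setminus\clsr\Theta$, so $\mathbf K\subseteq\tilde{\mathbf H}^\star$. Here $\pi^\star_{-2T}$ is the energy-orthogonal projection onto $\mathbf K$, so $\pi^\star_{-2T}h=0$ says $h\perp\mathbf K$ — equivalently, by the Dirichlet-principle computation behind Lemma~\ref{l:characterization-of-It}, that $h$ is stationary harmonic on $\Theta^\star_{-2T}$. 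Energy conservation makes $R_{2T}$ unitary on $H^1(\RR^n)\oplus L^2(\RR^n)$, so $\pi^\star_{-2T}R_{2T}h=0$ is equivalent to $R_{2T}h\perp\mathbf K$, hence to $h\perp R^{*}_{2T}\mathbf K$. The goal is therefore: for $h\in\tilde{\mathbf H}^\star$, $\;h\perp\mathbf G\iff h\perp\bigl(\mathbf K+R^{*}_{2T}\mathbf K\bigr)$. As $\mathbf G\subseteq\tilde{\mathbf H}^\star$, this asserts precisely that $\clsr{\mathbf G}$ and the closure of the $\tilde{\mathbf H}^\star$-orthogonal projection of $\mathbf K+R^{*}_{2T}\mathbf K$ coincide as subspaces of $\tilde{\mathbf H}^\star$, and I would establish this through two inclusions.

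The first inclusion, which gives $h\perp\mathbf G\Rightarrow h\perp(\mathbf K+R^{*}_{2T}\mathbf K)$, is that $\mathbf K\subseteq\clsr{\mathbf G}$ and that the $\tilde{\mathbf H}^\star$-part of $R^{*}_{2T}\mathbf K$ lies in $\clsr{\mathbf G}$. Both are finite-speed-of-propagation statements, after reducing by density to data compactly supported inside the open set $\Theta^\star_{-2T}$: such a datum sits at travel-time distance strictly greater than $2T$ from $\clsr\Theta$, so propagating it by time $2T$ leaves it supported outside $\clsr\Theta$, and — together with its support lying in $\Upsilon\setminus\clsr\Theta$ — this exhibits it as a member of $\mathbf G=\tilde{\mathbf H}^\star\cap R_{2T}\bigl(H^1_0(\RR^n\setminus\clsr\Theta)\oplus L^2(\RR^n\setminus\clsr\Theta)\bigr)$. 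The relevant data are dense in $\mathbf K$ and in $R^{*}_{2T}\mathbf K$ respectively, and the energy-negligible harmonic-extension tails near $\bdy\Theta$ and $\bdy\Upsilon$ are absorbed into $\clsr{\mathbf G}$ by its own stationary-harmonic members, as produced by Lemma~\ref{l:characterization-of-It}.

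The reverse inclusion — that $\mathbf G$ lies, modulo the orthogonal complement of $\tilde{\mathbf H}^\star$, in $\clsr{\mathbf K+R^{*}_{2T}\mathbf K}$; equivalently, that the two projection conditions on $h$ already force $\langle h,g\rangle=0$ for every $g\in\mathbf G$ — is the step I expect to be the main obstacle. Knowing only that $Fg$ avoids $\Theta$ at $t=0$ and $\clsr\Theta$ at $t=-2T$ is insufficient; one needs that at each intermediate time $s\in[-2T,0]$ the wave field vanishes on the deep core $\Theta_{|s+2T|}$, and this is exactly where the unique-continuation hypothesis on $c$ — via the light-diamond statement of~\sref{s:uc} — is used, as opposed to mere finite speed. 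From this sharper support information, $g$ decomposes into its far-exterior part, which lies in $\mathbf K$, plus the backward propagate ($R^{*}_{2T}=R_{-2T}$) of the far-exterior part of $R_{2T}g$, which lies in $R^{*}_{2T}\mathbf K$, with a remainder supported in the $2T$-shell about $\bdy\Theta$ that the Dirichlet-principle description of Lemma~\ref{l:characterization-of-It} identifies as a harmonic-extension datum, hence orthogonal to all of $\mathbf H^\star$. Chasing the two inclusions back through the translation of the second paragraph then yields the stated characterization.
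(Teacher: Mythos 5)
Your overall strategy matches the paper's: peel off $\clsr\pi h=0$, reduce to showing that $\mathbf G$ is (the closure of) $\mathbf G_0+R^{}_{-2T}\mathbf G_0$ with $\mathbf G_0=\tilde{\mathbf H}^\star_{-2T}$, use finite speed of propagation for the easy inclusion and unique continuation for the hard one, then translate orthogonality to the projection conditions via unitarity of the propagator. That is precisely what the paper does with its $\mathbf G_1=\mathbf G_0+R_{2T}\mathbf G_0$ (your $\mathbf K+R_{2T}^*\mathbf K$ is the same object, up to the paper's slight inconsistency between $R_{2T}$ and $R=\nu R_{2T}$ in that part of the setup). The forward inclusion argument is correct and is the same density-plus-FSP argument the paper uses.

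Where your proposal deviates, and where I think it would break down, is the hard direction. The paper does not decompose a given $g\in\mathbf G$ into a $\mathbf G_0$-part, a backward-propagated $\mathbf G_0$-part, and a harmonic remainder. Instead it proves that the \emph{orthogonal complement} of $\mathbf G_0+R_{2T}\mathbf G_0$ inside $\mathbf G$ is trivial: if $g\in\mathbf G$ satisfies $\pi^\star_{-2T}g=\pi^\star_{-2T}Rg=0$ in addition to $\clsr\pi g=\clsr\pi Rg=0$, then applying Lemma~\ref{l:uc-top-and-bottom} twice (once to a smoothed $\partial_t Fg$ to get $\pi^\star_{-T}R_Tg=0$, once to $Fg$ with the roles of $\Theta$ and $\Theta^\star$ swapped to get $\clsr\pi_{-T}R_Tg=0$) forces $R_Tg=0$, hence $g=0$. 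Your suggested direct decomposition
\[
g \;=\; \pi^\star_{-2T}g \;+\; R_{-2T}\,\pi^\star_{-2T}R_{2T}g \;+\; (\text{harmonic remainder})
\]
is not justified and I do not think it is correct as stated: there is no reason the two far-exterior pieces should add up to $g$ minus something in $\mathbf I$, nor are they mutually orthogonal in general. You are trying to exhibit $g$ explicitly, whereas the orthocomplement argument avoids that and goes through cleanly.

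One more concrete inaccuracy: you say that at intermediate time $s\in[-2T,0]$ one needs the wave field to vanish on $\Theta_{|s+2T|}$ and that this is where UC enters. But vanishing on $\Theta_{|s+2T|}$ is exactly what finite speed of propagation alone gives you (propagating forward from the $t=-2T$ condition). What Lemma~\ref{l:uc-top-and-bottom} buys, via the iterated FSP-plus-light-diamond argument, is the much stronger vanishing on $\Theta_{-T+|s+T|}$ — at $s=-T$ this is $\Theta_{-T}$, not the much smaller $\Theta_{T}$ — and it is that enlargement, combined with the stationary-harmonicity of $R_Tg$ on the complementary region $\Theta^\star_{-T}$ (again via Lemma~\ref{l:uc-top-and-bottom}, this time applied to $\partial_t Fg$), that forces $R_Tg$ to vanish identically.
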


The third proposition shows that our reflection data (the Cauchy solution operator $F$, restricted to the exterior of $\Omega$) is determined by the Dirichlet-to-Neumann map, which is the data usually assumed given in boundary control problems and the inverse problem. As a result, our method requires no additional information, from a theoretical standpoint.

\begin{proposition}
	Let $c_1,c_2$ be $L^\infty$ wave speeds on a $C^1$ domain $\Omega\subseteq\RR^n$. Extend $c_1,c_2$ to $\Omega^\star=\RR^n\setminus\clsr\Omega$ by setting them equal to some $c_0\in C^\infty(\RR^n)$.
	
	Define solution operators $F_1,F_2$ corresponding to $c_1,c_2$ as in~\eqref{e:wave-ivp}, and \emph{Dirichlet-to-Neumann} maps
	\begin{align}
		\Lambda_i&\colon g \mapsto \drestr{\d_\nu u}_{\RR\times\bdy\Omega},
		\text{ where }
		\begin{pdebracketed}
			(\d_t^2-c_i^2\Delta) u &= 0,\\
			\drestr u_{\RR\times\bdy\Omega} &= g,\\
			\drestr u_{t=0} = \drestr{\d_t u}_{t=0} &= 0.
		\end{pdebracketed}
		\label{e:DN-def}
	\end{align}
	If $\Lambda_1=\Lambda_2$, then $\drestr{F_1h}_{\RR\times\Omega^\star}=\drestr{F_2h}_{\RR\times\Omega^\star}$ for all $h\in H^1(\Omega^\star)\oplus L^2(\Omega^\star)$.
	\label{p:DN-determines-Cauchy}
\end{proposition}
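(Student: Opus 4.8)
The plan is to reduce the statement about the Cauchy solution operator to a statement about the Dirichlet-to-Neumann map by a finite-speed-of-propagation and duality argument. First, observe that it suffices to treat Cauchy data $h = (h_0, h_1)$ supported in a fixed bounded open set $K \subset \Omega^\star$, since $\drestr{F_i h}_{\RR\times\Omega^\star}$ depends continuously on $h$ in the stated norms and such data are dense; by linearity we may moreover treat $h_0$ and $h_1$ separately. Fix a point $(t_1, x_1) \in \RR \times \Omega^\star$ at which we wish to evaluate $F_1 h$ and $F_2 h$, and choose a large ball $B$ so that $K$, $x_1$, and a neighborhood of $\bdy\Omega$ relevant to finite speed of propagation up to time $\abs{t_1}$ all lie inside $B$. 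Because $c_1 = c_2 = c_0$ on $\Omega^\star$ and $c_0$ is smooth, the only place the two wave fields can possibly differ is through waves that have entered $\Omega$ and come back out; the difference $w = F_1 h - F_2 h$, restricted to $\RR \times \Omega^\star$, therefore satisfies the \emph{same} wave equation $(\d_t^2 - c_0^2\Delta)w = 0$ on $\RR \times \Omega^\star$, has zero Cauchy data at $t=0$, and its only ``source'' is its boundary behavior on $\RR\times\bdy\Omega$.

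The key step is then to show that the Cauchy trace $(\drestr{w}_{\RR\times\bdy\Omega}, \drestr{\d_\nu w}_{\RR\times\bdy\Omega})$ is determined by $\Lambda_1 = \Lambda_2$ together with $h$ and $c_0$. To see this, let $u_i = F_i h$ and write $u_i = v + r_i$ on $\RR \times \Omega^\star$, where $v$ is the free wave on all of $\RR^n$ with speed $c_0$ and Cauchy data $h$ (this is well-defined and independent of $i$ since $h$ is supported in $\Omega^\star$), and $r_i$ is the ``reflected'' correction, which solves $(\d_t^2 - c_0^2\Delta)r_i = 0$ on $\RR\times\Omega^\star$ with zero Cauchy data at $t=0$. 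Inside $\Omega$, $u_i$ solves the wave equation with speed $c_i$ and, by finite speed of propagation, has zero Cauchy data at $t=0$ (for $h$ supported away from $\bdy\Omega$; the general case follows by the density/splitting reduction and a short additional argument, or by shrinking to $t$ near $0$ first). Thus $u_i$ is exactly the solution of the mixed problem in $\Omega$ driven by boundary data $g_i := \drestr{u_i}_{\RR\times\bdy\Omega}$, so $\drestr{\d_\nu u_i}_{\RR\times\bdy\Omega} = \Lambda_i g_i$. Matching Dirichlet and Neumann traces across $\bdy\Omega$ from the $\Omega^\star$ side gives, for $r_i = u_i - v$ on $\Omega^\star$,
\begin{equation}
	\drestr{\d_\nu r_i}_{\RR\times\bdy\Omega} = \Lambda_i\big(\drestr{r_i}_{\RR\times\bdy\Omega} + \drestr{v}_{\RR\times\bdy\Omega}\big) - \drestr{\d_\nu v}_{\RR\times\bdy\Omega}.
\end{equation}
This exhibits $r_i$ on $\RR\times\Omega^\star$ as the solution of a wave equation with speed $c_0$, zero initial data, and a boundary condition of ``impedance'' type in which the only $i$-dependence enters through $\Lambda_i$. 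Since $\Lambda_1 = \Lambda_2$, uniqueness for this exterior mixed problem (with speed $c_0$, which is smooth, so standard energy estimates apply) forces $r_1 = r_2$ on $\RR\times\Omega^\star$, hence $F_1 h = F_2 h$ there.

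The main obstacle is making the exterior ``impedance'' problem rigorous in the low-regularity setting and justifying the uniqueness claim: one must check that $r_i$ has enough regularity for its Dirichlet and Neumann traces on $\RR\times\bdy\Omega$ to be well-defined and to satisfy the transmission relations, and that the resulting nonlocal boundary-value problem for $r_i$ on $\RR\times\Omega^\star$ has at most one finite-energy solution. I would handle this by running the argument on a finite time interval $[0, t_1]$ and a bounded spatial region, using the hidden-regularity/trace theory for the wave equation (in the spirit of Lions–Magenes, already cited) to make sense of the traces, and using a Gronwall-type energy estimate on $w = u_1 - u_2$ — which is globally a solution of a single wave equation outside $\Omega$ with zero data and whose trace on $\RR\times\bdy\Omega$ is annihilated by the common boundary map — to conclude $w \equiv 0$. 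A cleaner alternative, which I would also consider, is to bypass the reflected-field decomposition entirely: show directly by a duality argument, pairing $w$ against test solutions of the backward wave equation with speed $c_0$ on $\RR^n$ and integrating by parts over $[0,t_1]\times\Omega^\star$, that $\drestr{w(t_1)}_{\Omega^\star}$ is expressed purely in terms of the boundary pair $(\drestr{u_i}, \drestr{\d_\nu u_i})$ on $\RR\times\bdy\Omega$, and then invoke $\Lambda_1 = \Lambda_2$ to see that this pair is the same for $i=1,2$ (it is the graph of the common DN map applied to the common Dirichlet trace, the latter being pinned down by the same duality identity read as a Volterra equation in time).
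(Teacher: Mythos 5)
The approach you take is genuinely different from the paper's, but it has a real gap at its center. The paper proceeds \emph{constructively}: set $u_1 = F_1 h$, solve the interior IBVP~\eqref{e:DN-def} for $c_2$ with Dirichlet data $\drestr{u_1}_{\RR\times\bdy\Omega}$, and glue the result to $u_1$ on $\Omega^\star$. The Dirichlet traces match by construction, the Neumann traces match because $\Lambda_1 = \Lambda_2$, and the glued function is therefore a weak solution of the $c_2$ wave equation on all of $\RR\times\RR^n$ (citing a standard transmission result). By uniqueness of the Cauchy problem it equals $F_2 h$, whence $F_2 h = u_1 = F_1 h$ on $\Omega^\star$. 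The burden of proof lands entirely on the citation for the interface-gluing step; no auxiliary uniqueness for an exterior boundary-value problem is ever invoked.

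Your proposal instead reduces the statement to uniqueness for the exterior mixed problem on $\RR\times\Omega^\star$ with the nonlocal, $\Lambda_1$-dependent boundary condition you derive. That reduction itself is sound (your worry about $h$ being supported away from $\bdy\Omega$ is a red herring: $h$ supported in the open set $\Omega^\star$ already gives zero Cauchy data inside $\Omega$ at $t=0$, so the DN maps apply without the finite-speed detour). But you never actually establish the exterior uniqueness — you explicitly flag it as "the main obstacle" and offer two unexecuted sketches. The Gronwall sketch as stated does not work: the energy identity on $\Omega^\star$ produces a boundary flux $\int_{\bdy\Omega}\partial_t w\,\partial_\nu w$, and with $\partial_\nu w = \Lambda_1\big(\drestr{w}_{\bdy\Omega}\big)$ there is no pointwise-in-time sign or smallness to feed into Gronwall. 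What is true is that this flux, integrated in time, is nonnegative by \emph{passivity} of $\Lambda_1$ (energy absorbed by the interior is nonnegative) — but you never mention passivity, and proving it in the stated $L^\infty$/$C^1$ regularity class is itself nontrivial and is essentially equivalent to the interior energy estimate the paper sidesteps by gluing. In short: you have converted the problem into a nonlocal exterior uniqueness question whose only natural proofs either rediscover the paper's gluing argument or require a DN-passivity lemma you neither state nor prove. The duality/Volterra alternative is even vaguer and is not developed enough to assess. To salvage your route, you should either (a) prove the exterior uniqueness by constructing the interior extension via the IBVP driven by $\drestr{w}_{\bdy\Omega}$ and invoking global Cauchy uniqueness — at which point you have reproduced the paper's proof applied to the difference rather than the solution — or (b) state and prove the passivity of $\Lambda_1$ and carry the energy argument through rigorously.
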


\subsection{Recovering internal energy}				\label{s:energy}

As a direct application of the results in~\sref{s:exact-maf}, we show how scattering control can recover the energy of the harmonic almost direct transmission using only data outside $\Omega$, assuming $\supp h_0\subset \Theta\setminus\clsr\Omega$. If the Neumann series converges to some $h_\infty\in\mathbf C$, we can recover the energy directly from $h_\infty$, but if not, Theorem~\ref{t:limit-maf} allows us to recover the same quantities as a convergent limit involving the Neumann series' partial sums. In a forthcoming paper we demonstrate how these energies may be used in inverse boundary value problems for the wave equation that arise in imaging.

\begin{proposition}
	Let $h_0\in\mathbf H$, $T>0$, and suppose $(I-\pi^\star R\pi^\star R)h_\infty = h_0$. Then we can recover the harmonic almost direct transmission's energy from data observable on $\Theta^\star\cup\supp h_0$:
	\begin{align}
		 \En_{\RR^n}(h\DT) &= \En_{\RR^n} \big(h_\infty\big) - \En_{\RR^n} \big(\pi^\star Rh_\infty\big).
		 \label{e:basic-E-recovery}
	\intertext{We can also recover the kinetic energy of the almost direct transmission (with no harmonic extension) from data observable on $\Theta^\star\cup\supp h_0$:}
		\KE_{\Theta_T}(R_Th_0) &= \frac12\tform{h_0,h_0-R \pi^\star Rh_\infty - R h_\infty}.
		\label{e:basic-KE-recovery}
	\end{align}
	\label{p:basic-energy}
\end{proposition}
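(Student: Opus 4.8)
The plan is to reduce both identities to the isometry of $R$ on $\mathbf C$ together with the characterization of the wave field of $h_\infty$ supplied by Theorem~\ref{t:basic-maf}. For the energy identity~\eqref{e:basic-E-recovery}, the key observation is that $h_\infty = h_0 + (\text{tail})$ with the tail in $\mathbf H^\star$, and that $R_{-T}\clsr\pi R_{2T}h_\infty = h\DT$. First I would apply the Pythagorean theorem in $\mathbf C$: write $R h_\infty = \pi^\star R h_\infty + (I-\pi^\star) R h_\infty = \pi^\star R h_\infty + \clsr\pi R h_\infty$, so that
\begin{equation}
	\En_{\RR^n}(h_\infty) = \norm{R h_\infty}^2 = \norm{\pi^\star R h_\infty}^2 + \norm{\clsr\pi R h_\infty}^2 = \En_{\RR^n}(\pi^\star R h_\infty) + \norm{\clsr\pi R h_\infty}^2,
\end{equation}
using that $R$ is an isometry on $\mathbf C$. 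It then remains to identify $\norm{\clsr\pi R h_\infty}^2$ with $\En_{\RR^n}(h\DT)$. By Theorem~\ref{t:basic-maf}, $\clsr\pi R_{2T} h_\infty$ propagates (under $R_{-T}$, itself an isometry on the ambient energy space) to $h\DT$; since $\clsr\pi R h_\infty = \nu \clsr\pi R_{2T} h_\infty$ and $\nu$ preserves energy, we get $\norm{\clsr\pi R h_\infty} = \norm{\clsr\pi R_{2T} h_\infty} = \norm{R_{-T}\clsr\pi R_{2T} h_\infty} = \norm{h\DT}$. Care is needed because $R$ is the isometry on $\mathbf C$ while $R_{2T}$, $R_{-T}$ act on the larger space; the point is that the energy norm is the same, so the numerical value of $\En_{\RR^n}$ is unchanged, and one must check $\clsr\pi R h_\infty$ really lands where Theorem~\ref{t:basic-maf} says (which is exactly~\eqref{e:basic-maf-behavior} with $s=0$).

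For the kinetic energy identity~\eqref{e:basic-KE-recovery}, I would work with the bilinear/sesquilinear energy form and exploit that $\KE$ is the ``time-derivative'' half of the energy while $R$ flips the sign of the velocity component. The natural route is to expand $\tform{h_0, h_0 - R\pi^\star R h_\infty - R h_\infty}$ using $h_0 = (I - \pi^\star R\pi^\star R)h_\infty$ to replace the first slot, or better, to use the scattering control equation to rewrite $h_0 - R h_\infty$ and $R\pi^\star R h_\infty$ in terms of $h_\infty$ and projections. A cleaner way: note $R\pi^\star R h_\infty + R h_\infty = R(\pi^\star R + I)h_\infty$, and combine with $h_0 = h_\infty - \pi^\star R\pi^\star R h_\infty$; then the quantity becomes $\tform{h_0, h_\infty} - \tform{h_0, \pi^\star R\pi^\star R h_\infty} - \tform{h_0, R h_\infty} - \tform{h_0,\text{correction}}$, where the first two combine (again via the scattering control equation) to $\tform{h_0,h_0}$. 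The residual is then $\tform{h_0,h_0} - \tform{h_0, R h_\infty}$-type terms that one recognizes, after using $\nu$ and $R_{2T}$, as an inner product of $h\DT$-related data with itself localized to $\Theta_T$. The identity $\tfrac12\tform{g,g} - \tfrac12\tform{g,\nu g} = \KE(g)$ — which holds because $\tform{g,g} = \En(g_0\text{-part}) + \KE$ and $\tform{g,\nu g} = \En(g_0\text{-part}) - \KE$ — is the algebraic engine; the whole computation is arranging the three given terms so that exactly this combination, evaluated on $R_T h_0$ restricted to $\Theta_T$, falls out, the restriction to $\Theta_T$ appearing because $\clsr\pi$ and the interior wave field coincide there (Lemma~\ref{l:characterization-of-It}) and the harmonic extension contributes no kinetic energy.

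The main obstacle I anticipate is bookkeeping with the two different ambient spaces and the projections: $R$ is an isometry only on $\mathbf C$, $h\DT = \clsr\pi_T R_T h_0$ lives in the $\Upsilon$-energy space, and one must consistently track where $\clsr\pi$ (a projection on $\mathbf C$) versus hard restriction to $\Theta_T$ is being used, and verify that the harmonic-extension piece — which carries energy but zero kinetic energy — is correctly included in~\eqref{e:basic-E-recovery} and correctly excluded from~\eqref{e:basic-KE-recovery}. Concretely, the step where I rewrite $\norm{\clsr\pi R_{2T} h_\infty}$ as $\En_{\RR^n}(h\DT)$ relies on $R_{-T}$ being an isometry between the relevant spaces and on~\eqref{e:harmonic-ext-identity}; and the kinetic-energy step needs the fact that $R_Th_0$ and $R_Th_\infty$ agree on $\Theta_T$ (which is~\eqref{e:basic-maf-behavior} restricted to the interior, since the tail is supported outside $\Theta$ and has not yet reached $\Theta_T$), so that $\KE_{\Theta_T}(R_T h_0) = \KE_{\Theta_T}(R_T h_\infty) = \KE_{\Theta_T}(h\DT)$. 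Once these identifications are pinned down, both displayed formulas are a finite manipulation of the scattering control equation inside the inner product, and I would present that manipulation compactly rather than expanding every term.
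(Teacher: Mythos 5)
Your proof of~\eqref{e:basic-E-recovery} is exactly the paper's: Pythagoras on the orthogonal decomposition $Rh_\infty=\pi^\star Rh_\infty+\clsr\pi Rh_\infty$, then the identification $\clsr\pi Rh_\infty\leftrightarrow h\DT$ via Theorem~\ref{t:basic-maf} and the isometry of $R_{-T}$. For~\eqref{e:basic-KE-recovery} your central identity $\KE(g)=\tfrac12\tform{g,g-\nu g}$ (equivalently $\En(g-\nu g)=4\,\KE(g)$) is precisely what the paper encodes by forming the auxiliary solution $w(t,\cdot)=v(t,\cdot)-v(2T-t,\cdot)$ and invoking conservation of energy between $t=T$ and $t=2T$; the subsequent expansion you describe — using $\pi^\star R\clsr\pi Rh_\infty\eqml 0$ and the scattering-control relation $\pi^\star R\pi^\star Rh_\infty=h_\infty-h_0$ — reproduces the paper's calculation, so the route is essentially the same even though you leave the final bookkeeping unfinished.
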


\begin{proposition}
	Let $h_0\in\mathbf H$ and $T>0$, and $h_k$ as before. We can recover the energy of the harmonic almost direct transmission as a convergent limit involving data observable on $\Theta^\star\cup\supp h_0$:
	\begin{equation}
		\En_{\RR^n}(h\DT) = \lim_{k\to\infty}\left[\En_{\RR^n}(h_k) - \En_{\RR^n}(\pi^\star R h_k)\right].
		\label{e:limit-E-recovery}
	\end{equation}
	Similarly, for the kinetic energy of the almost direct transmission,
	\begin{nalign}
		4\,\KE_{\Theta_{T}}(R_Th_0) &= \lim_{k\to\infty}\Big[\En(h_k)+\En(h_0)-\En(\pi^\star R \pi^\star R h_k) \\
		&\qquad\qquad + 2\tform{\pi^\star Rh_k,\, h_k-R\pi^\star Rh_k} - 2\tform{h_0,\,R \pi^\star Rh_k + R h_k}\Big].
		\label{e:limit-KE-recovery}
	\end{nalign}
	\label{p:limit-energy}
\end{proposition}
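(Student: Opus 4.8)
The plan is to obtain both identities from Theorem~\ref{t:limit-maf}, the unitarity of $R$ on $\mathbf C$, and the orthogonal splitting $\norm{g}^2=\norm{\clsr\pi g}^2+\norm{\pi^\star g}^2$ valid for $g\in\mathbf C$ (since $\clsr\pi=I-\pi^\star$ with $\pi^\star$ an orthogonal projection), essentially re-running the proof of Proposition~\ref{p:basic-energy} with the partial sums $h_k$ in place of $h_\infty$ while grouping terms so that only convergent combinations ever appear.

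For~\eqref{e:limit-E-recovery}: since $h_0\in\mathbf H$ and $h_k-h_0=\sum_{i\ge 1}(\pi^\star R\pi^\star R)^i h_0\in\mathbf H^\star$ with $\mathbf H\perp\mathbf H^\star$, we have $\clsr\pi h_k=h_0$, and both $h_k$ and $\pi^\star Rh_k$ are supported in $\Upsilon$, so $\En_{\RR^n}(h_k)=\norm{h_k}^2$ and $\En_{\RR^n}(\pi^\star Rh_k)=\norm{\pi^\star Rh_k}^2$. As $R$ is unitary on $\mathbf C$, $\En_{\RR^n}(h_k)-\En_{\RR^n}(\pi^\star Rh_k)=\norm{Rh_k}^2-\norm{\pi^\star Rh_k}^2=\norm{\clsr\pi Rh_k}^2$, which by Theorem~\ref{t:limit-maf} decreases to $\norm{h\DT}^2=\En_{\RR^n}(h\DT)$; the identical computation with $h_\infty$ gives~\eqref{e:basic-E-recovery}.

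For~\eqref{e:limit-KE-recovery}: first reduce the kinetic energy to a quadratic form in $h\DT$. Since $h\DT$ agrees with $R_Th_0$ on $\Theta_T$ and has vanishing velocity component off $\Theta_T$ (Lemma~\ref{l:characterization-of-It}), $\KE_{\Theta_T}(R_Th_0)=\KE_{\RR^n}(h\DT)=\tfrac14\norm{h\DT-\nu h\DT}^2=\tfrac12\big(\norm{h\DT}^2-\langle h\DT,\nu h\DT\rangle\big)$, using that $\nu$ is an isometric involution and that $\langle h\DT,\nu h\DT\rangle=\int(\dabs{\gradient (h\DT)_0}^2-c^{-2}\dabs{(h\DT)_1}^2)$ is real; hence $4\,\KE_{\Theta_T}(R_Th_0)=2\norm{h\DT}^2-2\langle h\DT,\nu h\DT\rangle$. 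The first summand is handled by the previous step. For the cross term, use $h\DT=R_{-T}\clsr\pi R_{2T}h_\infty$ (Theorem~\ref{t:basic-maf}), the time-reversal relations $\nu R_s=R_{-s}\nu$ and $R_{-T}^*=R_T$, and $\clsr\pi=I-\pi^\star$ to rewrite $\langle h\DT,\nu h\DT\rangle$ in terms of $h_0$, $Rh_\infty$, $R\pi^\star Rh_\infty$, $\pi^\star Rh_\infty$, and $\clsr\pi R\pi^\star Rh_\infty$; combined with the scattering control equation $\pi^\star R\pi^\star Rh_\infty=h_\infty-h_0$ and $\clsr\pi h_\infty=h_0$, this reproduces~\eqref{e:basic-KE-recovery}. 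Replacing $h_\infty$ by $h_k$ and reorganising, each squared-norm term ($\En(h_k)$, $\En(\pi^\star R\pi^\star Rh_k)$, $\En(h_0)$) combines with its neighbours into a difference of the form $\norm{\clsr\pi R(\cdot)}^2$ or $\norm{R_{-T}\clsr\pi R_{2T}(\cdot)}^2$, convergent by Theorem~\ref{t:limit-maf}, while every bilinear term pairs the \emph{fixed} vector $h_0$ against a sequence converging in $\mathbf C$ by~\eqref{e:maf-interior-limit}; the combination on the right of~\eqref{e:limit-KE-recovery} is precisely this regrouping, with the leftover pieces — built from $h_{k+1}-h_k=(\pi^\star R\pi^\star R)^{k+1}h_0$ paired against $h_0$ — tending to $0$. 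Observability on $\Theta^\star\cup\supp h_0$ is clear, since $h_0$ is known and every partial-sum quantity is assembled from $h_0$ by applying $R$ and the exterior projection $\pi^\star$, i.e.\ from the reflection data.

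The main obstacle is that last step. Because the scattering control series need not converge in $\mathbf C$ — the tails converge only in the weighted space~\eqref{e:space-of-maf-convergence} — individual terms such as $\En(h_k)$ or $\langle h_0,Rh_k\rangle$ may genuinely diverge, so one cannot take the limit of Proposition~\ref{p:basic-energy}'s formula term by term. The work is the bookkeeping needed to verify that~\eqref{e:limit-KE-recovery} really is a telescoping-plus-fixed-vector regrouping of $2\norm{h\DT}^2-2\langle h\DT,\nu h\DT\rangle$ in which each summand converges on its own, using only the two convergences supplied by Theorem~\ref{t:limit-maf} (namely $R_{-T}\clsr\pi R_{2T}h_k\to h\DT$ in $\mathbf C$ and $\norm{\clsr\pi Rh_k}\searrow\norm{h\DT}$) together with the recursion $h_{k+1}=h_0+\pi^\star R\pi^\star Rh_k$.
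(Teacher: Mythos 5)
Your treatment of \eqref{e:limit-E-recovery} is correct and is the paper's argument verbatim: use $\clsr\pi h_k = h_0$, unitarity of $R$, and the orthogonal splitting $\norm{Rh_k}^2 = \norm{\clsr\pi Rh_k}^2 + \norm{\pi^\star Rh_k}^2$ to rewrite the bracket as $\norm{\clsr\pi Rh_k}^2$, which decreases to $\norm{h\DT}^2$ by Theorem~\ref{t:limit-maf}.

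For \eqref{e:limit-KE-recovery}, however, there is a genuine gap, and you partly flag it yourself. You correctly identify that $4\,\KE_{\Theta_T}(R_Th_0) = \norm{h\DT - \nu h\DT}^2$ (this is the same object as the paper's $\En(\mathbf w(T,\cdot))$, since $\mathbf v(T,\cdot) - \nu\mathbf v(T,\cdot) = \mathbf w(T,\cdot)$), and you correctly note that $h_{k+1}-h_k = (\pi^\star R\pi^\star R)^{k+1}h_0 \to 0$ in $\mathbf C$ (dominated convergence applied to $n^{2k+1}Zh_0$ in the spectral picture). But the proof does not consist of ``bookkeeping'' you can defer: the entire content of \eqref{e:limit-KE-recovery} is a specific algebraic identity which you never write down and never verify, namely that for \emph{every} $k$,
\begin{equation*}
	\En(\clsr\pi R h_k - \clsr\pi R\clsr\pi R h_k) = \En(h_k)+\En(h_0)-\En(\pi^\star R \pi^\star R h_k) + 2\tform{\pi^\star Rh_k,\, h_k-R\pi^\star Rh_k} - 2\tform{h_0,\,R \pi^\star Rh_k + R h_k}.
\end{equation*}
This identity is obtained by repeating the algebra of Proposition~\ref{p:basic-energy}, but substituting $\pi^\star R\clsr\pi Rh_k = \pi^\star h_k - \pi^\star R\pi^\star Rh_k$ wherever the proof of~\eqref{e:basic-KE-recovery} used the vanishing of $\pi^\star R\clsr\pi R h_\infty$; the extra cross term $2\tform{\pi^\star Rh_k,\, h_k-R\pi^\star Rh_k}$ appearing in~\eqref{e:limit-KE-recovery} but not in~\eqref{e:basic-KE-recovery} comes precisely from this substitution. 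Once you have the exact identity, the left side converges because $\clsr\pi Rh_k$ and $\clsr\pi R\clsr\pi Rh_k$ both converge in $\mathbf C$ by Theorem~\ref{t:limit-maf}, and identifying the limit with $4\,\KE_{\Theta_T}(R_Th_0)$ is done first for $h_0\in\mathcal Q$ (where $h_k\to h_\infty$ in $\mathbf C$, so Proposition~\ref{p:basic-energy} applies directly) and then for all $h_0\in\mathbf H$ by the density of $\mathcal Q$ and continuity in $h_0$ of both sides. Your sketch omits both the exact identity and the $\mathcal Q$-density step; asserting that ``the combination on the right is precisely this regrouping'' without exhibiting it, and without the density argument to identify the limit, is where the proof is incomplete.
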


\subsection{Proofs}				\label{s:exact-proofs}

\begin{proof}[Proof of Theorem~\ref{t:basic-maf}]
	The proof is mostly a simple application of unique continuation and finite speed of propagation.
	
\paragraph{Equation~\eqref{e:basic-maf-behavior} ($\Rightarrow$)}
	Let $v(t,x)=FR_{-2T}\clsr\pi R_{2T} h_\infty$ be the solution of the wave equation with Cauchy data $\clsr\pi R_{2T}h_\infty$ at $t=2T$. We will often consider Cauchy data at a particular time, and so define $\mathbf v=(v,\d_t v)$.
	
	Applying $\bar\pi$ to the defining equation $(I-\pi^\star R\pi^\star R)h_\infty=h_0$ implies $\clsr\pi h_\infty=h_0$; also $(\pi^\star \mathbf v)(0,\cdot)=0$, since
\begin{nalign}
					0 = \pi^\star h_0	&= \pi^\star (I-\pi^\star R_{-2T}\pi^\star R_{2T}) h_\infty\\
									&= \pi^\star R_{-2T}\clsr\pi R_{2T} h_\infty\\
									&= (\pi^\star \mathbf v)(0,\cdot).
\end{nalign}

Outside of $\Theta$, then, $\mathbf v(0,\cdot)$ and $\mathbf v(2T,\cdot)$ are equal to their projections in $\mathbf I$, and therefore are stationary harmonic. Equivalently, $\partial_t v$ and $\partial_{tt} v$ are zero on $\Theta^\star$ for $t=0,2T$.

Because $c$ is time-independent, $\partial_t v$ is also a (distributional) solution to the wave equation. If $\partial_t v\in C(\RR, H^1(\RR^n))$, then Lemma~\ref{l:uc-top-and-bottom} applied to $\partial_t v$ gives $\partial_t v(T,\cdot)=\partial_{tt} v(T,\cdot)=0$ on $\Theta_T^\star$; it follows that $\mathbf v(T,\cdot)$ is stationary harmonic on $\Theta_T^\star$. For the general case, choose a sequence of mollifiers $\rho_\eps\to\delta$ in $\mathcal E'(\RR)$ and apply Lemma~\ref{l:uc-top-and-bottom} to $\rho_\eps'(t) \ast v$ to obtain the same conclusion.

\begin{figure}[tb]
	\centering
	\includegraphics{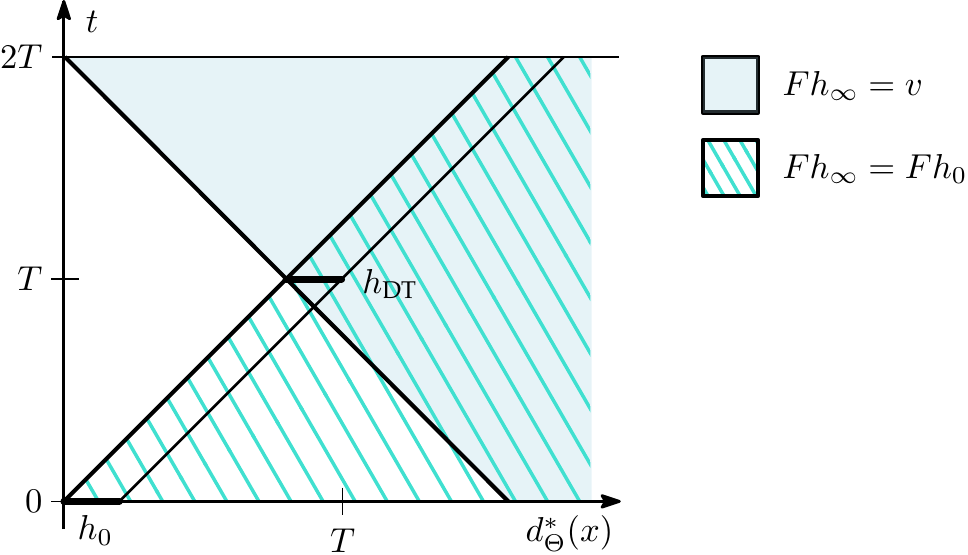}
	\caption{Finite speed of propagation applied twice to wave field $v$.}
	\label{f:double-fsp}
\end{figure}

By finite speed of propagation (FSP), $\bar\pi_{\abs s}R_s\bar\pi=\bar\pi_{\abs s}R_s$ for any $s\in\RR$.
Applying this twice, we find that in $\Theta_T$ at time $T$, the solution $v$ is equal to $h_\infty$'s wave field, which in turn is equal to $h_0$'s wave field (Figure~\ref{f:double-fsp}):
\begin{equation}
	\clsr\pi_T \mathbf v(T,\cdot) =
	\clsr\pi_T R_{-T} \clsr\pi R_{2T} h_\infty
		\eqFSP \clsr\pi_T R_{-T} R_{2T} h_\infty
		= \clsr\pi_T R_T h_\infty
		\eqFSP \clsr\pi_T R_T \bar\pi h_\infty
		= \clsr\pi_T R_Th_0
		\eqdef h\DT.
	\label{e:double-FSP}
\end{equation}
However, since $\mathbf v(T,\cdot)$ is stationary harmonic on $\Theta_T^\star$, we can remove the projection on the left-hand side: $\clsr\pi_T R_{-T} \clsr\pi R_{2T} h_\infty=R_{-T} \clsr\pi R_{2T} h_\infty$. This proves the forward direction of~\eqref{e:basic-maf-behavior}. More generally, it follows that $\clsr\pi_{T-s} R_{T+s} h_\infty=\mathbf v(T+s,\cdot)=R_s h\DT$ for $s\in[0,T]$. Indeed, $\mathbf v(T+s,\cdot)=R_{T+s} h_\infty$ on $\Theta_{T-s}$ by finite speed of propagation, and using Lemma~\ref{l:uc-top-and-bottom} as above implies $\mathbf v(T+s,\cdot)$ is stationary harmonic on $\Theta_{T-s}^\star$ for $s\in[0,T]$.

\paragraph{Equation~\eqref{e:harmonic-ext-identity}}

As above, apply Lemma~\ref{l:uc-top-and-bottom} to $\d_t v$. This implies that $\d_t v\restrictto{[0,2T]\times\Theta^\star}=0$. Hence $v$ is constant in time in $\Theta^\star$. At time $T$, we have $\mathbf v(T,\cdot)=\clsr\pi_T R_Th_0$, and the pressure field $v(T,\cdot)$ is the harmonic extension of the first component of $R_Th_0\restrictto{\bdy\Theta_T}$. At time $2T$, $\mathbf v$ equals $\clsr\pi R_{2T}h_\infty$ on $\Theta^\star$ by construction, proving~\eqref{e:harmonic-ext-identity}.

\paragraph{Equation~\eqref{e:basic-maf-behavior} ($\Leftarrow$)}

Conversely, suppose $R_{-T}\clsr\pi R_{2T} h_\infty=h\DT$.
Let $v(t,x)=(Fh\DT)(t-T,x)$ be the wave field generated by the harmonic almost direct transmission. Since $\mathbf v(T,\cdot)$ is stationary harmonic in $\Theta_T^\star$ we have $(\d_t\mathbf v)(T,\cdot)=0$ there. Applying finite speed of propagation, $(\d_t \mathbf v)(0,\cdot)=0$ on $\Theta^\star$, so $(\pi^\star \mathbf v)(0,\cdot)=0$.

Because $R_{-T}\clsr\pi R_{2T} h_\infty=h\DT$, the solution $v$ is equal to $(F\clsr\pi R_{2T}h_\infty)(t-2T,x)$, the wave field generated by $\clsr\pi R_{2T}h_\infty$. Hence $\pi^\star R_{-2T}\clsr\pi R_{2T}h_\infty = 0$, and we have
\begin{equation}
	(I - \pi^\star R \pi^\star R)h_\infty = (I - \pi^\star R (\pi^\star+\clsr\pi) R)h_\infty = (I - \pi^\star)h_\infty = \clsr\pi h_\infty.
	\label{e:some-maf-solution}
\end{equation}
Therefore $h_\infty$ is a solution of the scattering control equation for some initial pulse $\clsr\pi h_\infty$; by hypothesis, this initial pulse is $h_0$.

\paragraph{Uniqueness of $h_\infty$}
	Since $R$ is unitary and $\pi$ is a projection, any $g\in\mathbf C$ satisfies
\begin{equation}
	\norm{\pi^\star R\pi^\star Rg} \leq \norm{\pi^\star R g} \leq \dnorm g\!.
	\label{e:maf-iteration-no-energy-gain}
\end{equation}
	Now, suppose that $(I-\pi^\star R\pi^\star R)g=0$ for some $g\in\mathbf C$. As $g=\pi^\star R\pi^\star Rg$ no energy can be lost in either application of $\pi^\star$, and both inequalities of~\eqref{e:maf-iteration-no-energy-gain} are in fact equalities. Hence $\clsr\pi g$ and $\clsr\pi R_{2T}g$ must be zero, implying $g\in \mathbf G$. But by construction $\mathbf G\cap\mathbf C=\{0\}$, establishing uniqueness.
	
	Conversely, any $g\in\mathbf G$ satisfies $g=\pi^\star R\pi^\star R g$ by finite speed of propagation, so in fact $\mathbf G = \ker(I-\pi^\star R\pi^\star R)$.

\paragraph{Equation~\eqref{e:harmonic-ext-bound}}

Finally, since $i=h\DT\restrictto{\Theta^\star}=\clsr\pi_T R_Th_0\restrictto{\Theta^\star}$, it follows immediately that
\begin{equation}
	\norm i \leq\norm{\clsr\pi_T R_Th_0}\leq\norm{R_Th_0}=\norm{h_0}.
\end{equation}
The proof is complete.
\end{proof}

\noindent In the proof of Theorem~\ref{t:basic-maf}, we used the following corollary of finite speed of propagation and unique continuation:

\begin{lemma}
	\nobelowdisplayskip
	Let $u\in C(\RR,H^1(\RR^n))$ be a solution of $(\partial_t^2-c^2\Delta)u=0$ such that $u(0,\cdot)=u(2T,\cdot)=\d_t u(0,\cdot)=\d_t u(2T,\cdot)=0$ on $\Theta^\star$. Then $u$ is zero on the set
	\[
		\mathcal D = \set{(t,x)}{d^*_\Theta(x)<T-\tabs{t-T}}\!.
	\]	
	\label{l:uc-top-and-bottom}
\end{lemma}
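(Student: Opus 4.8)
The plan is to deduce Lemma~\ref{l:uc-top-and-bottom} from the unique continuation property stated in \sref{s:uc} by covering the set $\mathcal D$ with light diamonds $D(x_0,\tau)$ anchored at interior points of $\Theta$. The key observation is that $\mathcal D=\{(t,x): d^*_\Theta(x)<T-|t-T|\}$ is, after the time shift $t\mapsto t-T$, exactly the union of light diamonds $D(x_0,d^*_\Theta(x_0))$ over all $x_0$ with $d^*_\Theta(x_0)>0$, i.e.\ over $x_0$ ranging through the interior of $\Theta$ (points with $d^*_\Theta(x_0)\le 0$ contribute nothing). So it suffices to show that for each fixed $x_0\in\Theta$ the shifted solution $\tilde u(t,x)=u(t+T,x)$ vanishes on $D(x_0,d^*_\Theta(x_0))$; then taking the union over $x_0$ gives the claim.

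First I would fix $x_0\in\Theta$ and set $\tau=d^*_\Theta(x_0)=d(x_0,\bdy\Theta)>0$. The hypothesis gives that $u$, $\d_t u$ vanish on $\Theta^\star$ at $t=0$ and $t=2T$, hence $\tilde u$, $\d_t\tilde u$ vanish on $\Theta^\star$ at $t=\pm T$. I want to invoke the first bullet of the unique continuation hypothesis (finite speed of propagation type), but that bullet is stated for Cauchy data vanishing near a point, whereas here the data vanish only on the exterior $\Theta^\star$. The fix is that the ball $\{d(x,x_0)<\tau\}$ lies inside $\Theta$ by definition of $\tau$, so I instead want to use a unique continuation statement that propagates vanishing from the lateral exterior region into the diamond. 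Concretely, the set where $\tilde u$ is already known to vanish includes the two spacelike slabs near $t=\pm T$ on $\Theta^\star$; by finite speed of propagation (applied to $\tilde u$ forward from $t=-T$ where it vanishes on $\Theta^\star$, and backward from $t=T$), $\tilde u$ vanishes on the region $\{(t,x): x\in\Theta^\star,\ |t|\le T,\ d(x,\bdy\Theta)>T-\ldots\}$ — more carefully, $\tilde u=0$ on $\{(t,x): d^*_\Theta(x)<-(T-|t|)\}$ near the exterior. This puts us in a position to apply the unique continuation property of \sref{s:uc}: $\tilde u$ vanishes on a neighborhood of a segment $[-\sigma,\sigma]\times\{x_0'\}$ for suitable boundary-adjacent $x_0'$, which propagates into the light diamond.

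Actually the cleanest route, which I would adopt, is: apply the dichotomy in \sref{s:uc} directly with the \emph{second} bullet (unique continuation from a segment through a point). For any point $y$ with $d(y,\bdy\Theta)<0$, i.e.\ $y$ just outside $\clsr\Theta$, and any $\sigma$ with $0<\sigma<-d^*_\Theta(y)$... no — rather, I should anchor at boundary points. Let me instead argue by a limiting/covering argument: the statement in \sref{s:uc} says that if $u$ solves the wave equation near $D(x_0,T')$ and $u=0$ near $[-T',T']\times\{x_0\}$, then $u=0$ on $D(x_0,T')$. Given $x_0\in\Theta$ with $\tau=d^*_\Theta(x_0)$, pick any $x_1\in\Theta^\star$ close to $\bdy\Theta$; for $x_1$ close enough, $d(x_0,x_1)<\tau+\delta$ for small $\delta$. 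From the FSP consequence above, $\tilde u$ vanishes on a full spacetime neighborhood of $[-\sigma,\sigma]\times\{x_1\}$ for $\sigma$ slightly less than $d(x_1,\bdy\Theta)+(\text{reach of FSP})$; choosing $x_1\to\bdy\Theta$ and optimizing, the unique continuation conclusion propagates the zero set to $D(x_1,\sigma')$, and the union of these diamonds over admissible $x_1$ exhausts the shifted $\mathcal D$ restricted to $\Theta$. Points of $\mathcal D$ outside $\Theta$ are handled directly by finite speed of propagation from the exterior Cauchy data.

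The main obstacle I anticipate is purely bookkeeping of the travel-time geometry: matching the half-widths $T-|t-T|$ and the depth function $d^*_\Theta$ to the radii $T'$ and centers $x_0$ of the light diamonds in the \sref{s:uc} hypothesis, and verifying that the required neighborhoods on which $\tilde u$ vanishes (coming from the two vanishing slabs at $t=0,2T$ plus finite speed of propagation) actually contain a time-segment $[-T',T']\times\{x_0\}$ of the right length. Once the geometry is lined up, the unique continuation input is applied verbatim and the union-over-$x_0$ step is immediate. A secondary, minor point is the regularity issue already flagged in the proof of Theorem~\ref{t:basic-maf}: the hypothesis only gives $u\in C(\RR,H^1)$, which is exactly the regularity class in which the \sref{s:uc} property is assumed to hold, so no mollification is needed here — but I would remark on this to be safe.
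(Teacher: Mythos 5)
Your overall strategy --- use finite speed of propagation to establish a zero set in the exterior, then invoke the unique‑continuation dichotomy of \sref{s:uc} at exterior base points and cover $\mathcal D$ by the resulting light diamonds --- is the same in spirit as the paper's proof. But there is a genuine gap, and it is not the bookkeeping issue your last paragraph suggests: you are missing the bootstrap iteration, and a single pass of FSP $+$ UC provably does not reach $\mathcal D$.

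The error is where you assert that FSP gives vanishing of $\tilde u$ ``on a full spacetime neighborhood of $[-\sigma,\sigma]\times\{x_1\}$'' for $x_1$ boundary-adjacent. You correctly compute that FSP yields $\tilde u = 0$ exactly on $\{(t,x): d^*_\Theta(x) < -(T-|t|)\}$. But for a fixed $x_1\in\Theta^\star$ at depth $d^*_\Theta(x_1)=-r$ with $0<r<T$, the time slice of that set is $\{t : |t|>T-r\}$, which is the \emph{two disjoint end intervals} $[-T,-(T-r))\cup((T-r),T]$, not a segment containing $t=0$. The hypothesis of the UC dichotomy (vanishing on a time interval centered at the apex of the desired diamond) therefore fails at $(0,x_1)$ for every $x_1$ with $|d^*_\Theta(x_1)|<T$. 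A single application of UC can thus only be anchored at base points with $d^*_\Theta(x_1)\le -T$, and the resulting diamonds $D(x_1,T)$ reach $\bdy\Theta$ at best --- they never penetrate $\Theta$. Since $\mathcal D$ contains the whole interior shell $\Theta\setminus\Theta_T$, the union over ``admissible $x_1$'' does not exhaust $\mathcal D$ regardless of how one optimizes.

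The paper's fix is an explicit iteration: FSP gives $u=0$ on a neighborhood of $[0,2T]\times\Theta^\star_{-T-\delta}$; unique continuation from the segments $[0,2T]\times\{x_0\}$, $x_0\in\bdy\Theta_{-T-\delta}$, enlarges the zero set so that it contains $[0,2T]\times\Theta^\star_{-T/2-\delta}$, halving the remaining gap to $\bdy\Theta$; repeating this with base points at depth $-T/2-\delta$, then $-T/4-\delta$, etc., one obtains $u=0$ on $[0,2T]\times\Theta^\star_{-T/2^n-\delta}$ for all $n$, together with all the light diamonds this licenses, and it is only the union over all iterations that equals $\mathcal D$. If you insert this induction (each pass enlarges the zero time segment at base points one shell closer to $\bdy\Theta$, enabling the next pass), your covering argument then goes through as intended.
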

\begin{proof}
	By finite speed of propagation, $u$ is zero on a neighborhood of $[0,2T]\times \Theta_{-T-\delta}$ for all $\delta>0$, and thus by unique continuation, also zero on the union of open light diamonds centered at points in $[0,2T]\times\bdy\Theta_{-T-\delta}$. This includes $[0,2T]\times \Theta_{-T/2-\delta}$, and repeating the argument, we find that $u=0$ on all open light diamonds centered at points in $[0,2T]\times \Theta_{-T/2^n-\delta}$ for all $n\in\ZZ$ and $\delta>0$. The union of these open light diamonds is $\mathcal D$.
\end{proof}

\begin{proof}[Proof of Theorem~\ref{t:limit-maf}]
The proof is via the spectral theorem, which will also shed further light on the behavior of the Neumann series.

First, note $R=\nu\circ R_{2T}$ is self-adjoint as well as unitary, since $R^*=R_{2T}^*\circ\nu^*=R_{-2T}\circ\nu=\nu\circ R_{2T}$. Divide $R$ into two self-adjoint parts, $N$ and $Z$:
\begin{align}
	N &= \pi^\star R\pi^\star + \clsr\pi R\clsr\pi,
	&
	Z &= \pi^\star R\clsr\pi + \clsr\pi R\pi^\star.
\end{align}
In other words, thinking of $\im\pi^\star=\mathbf H^\star$ and $\im\clsr\pi=\mathbf H\oplus\mathbf I$ as two halves of $\mathbf C$, the operator $N$ describes wave movement within one half, while $Z$ describes movement from one half to the other. For any $f\in\mathbf H$ the identity $f = R^2f = (N^2+Z^2)f + (NZ+ZN)f$ holds. If $f\in\mathbf H^\star$ or $f\in\mathbf H\oplus\mathbf I$, then $(NZ+ZN)f$ is in the opposite half from $f$, so $NZ+ZN=0$, and $N^2+Z^2=I$ when the domain is restricted to either half.

Applying the spectral theorem to $N$, identify $\mathbf C$ with $L^2(X,\mu)$ for some set $X$ and measure $\mu$, upon which $N$ acts as a multiplication operator $n(x)$. As $Z$ and $N$ do not commute, $Z$ has no special form with respect to this spectral representation.

Since $\norm N\leq \norm R=1$, we have $\abs n\leq 1$. Split $X$ into two sets
\begin{nalign}
	X'	&=	n^{-1}(\{-1,1\}),			\\
	X''	&=	n^{-1}((-1,1)) = X\setminus X'.
\end{nalign}
For $h\in L^2(X',\mu)$,
\begin{equation}
	\norm{Nh}=\left(\int_X n^2\abs h^2\,d\mu \right)^{\mathrlap{1/2}} \;\; = \norm{h}=\norm{Rh}\!,
\end{equation}
implying $Zh=0$. Conversely, if $Zh=0$, then $\norm{Nh}=\norm h$, implying $n=\pm 1$ on $\supp h$. In consequence, $L^2(X',\mu)=\ker Z=\mathbf J$, and hence $\chi$ is multiplication by the characteristic function of $X'$.

Returning to the Neumann series, since $(\pi^\star)^2 = \pi^\star$, rewrite $h_k$ as
\begin{nalign}
	h_k - h_0	= \sum_{i=0}^{k-1} (\pi^\star R\pi^\star R\pi^\star )^i
						 (\pi^\star R\pi^\star)(\pi^\star R\bar\pi) h_0
		=  \sum_{i=0}^{k-1} n^{2i+1} Z h_0
		&=  n\frac{1-n^{2k}}{1-n^2} Zh_0.
	\label{e:spectral-tail}
\end{nalign}
Turning to $\clsr\pi R h_k$ now, since $Zn=-nZ$ on $\im\pi^\star\owns n^iZh_0$ and $Z^2=1-n^2$,
\begin{nalign}
	\clsr\pi R h_k
	=
	Z(h_k-h_0)+nh_0
	&=
	Zn\frac{1-n^{2k}}{1-n^2}Zh_0 + nh_0\\
	&=
	-n\frac{1-n^{2k}}{1-n^2}Z^2h_0 + nh_0\\
	&=
	n^{2k+1}h_0.
\end{nalign}
$n^{2k+1}h_0$ converges pointwise, monotonically, as a function in $L^2(X,\mu)$:
\begin{equation}
	(\clsr\pi Rh_k)(x) = n^{2k+1}h_0(x) \to \when{nh_0(x), & \abs{n(x)}=1;\\
						0, & \abs{n(x)}<1.}
	\qquad
	\forall x\in X.
	\label{e:lim-pibar-Rhk}
\end{equation}
The convergence holds not only pointwise but also in $L^2(X,\mu)$ by dominated convergence. Its limit function is exactly $n\chi h_0=R\chi h_0$, the projection of $Rh_0$ onto $\mathbf J$, proving the first limit in~\eqref{e:maf-interior-limit}. Also, as a consequence of the monotonicity, $\norm{\clsr\pi Rh_k}\searrow \norm{R\chi h_0}=\norm{\chi h_0}$.

Hence, while the Neumann series $\{h_k\}$ may diverge, the component of $Rh_k$ in $\mathbf H\oplus\mathbf I$ (and therefore inside $\Theta$) converges and is actually decreasing in energy.

\paragraph{Proof of~\eqref{e:space-of-maf-convergence}}

Starting from~\eqref{e:spectral-tail}, we wish to commute $Z$ and the powers of $n$.
In the weighted space $L^2(X'',(1-n^2)^2\mu)$,
\begin{equation}
	h_k-h_0\to \frac{n}{1-n^2}Zh_0 = \frac{n}{1-n^2}Z(1-\chi) h_0 = -Z\frac{n}{1-n^2}(1-\chi) h_0.
\end{equation}
The factor $(1-\chi)$ is a projection away from the kernel of $Z$, where $(1-n^2)^{-1}$ blows up. We may insert it because $\mathbf J=\ker Z$, and therefore $Z\chi=0$. After doing so, the second equality holds because $(1-\chi)h_0$ lies in the inside half $\mathbf H\oplus\mathbf I$.

Any $j\in\mathbf H$ (or $\mathbf H^\star$) satisfies $\norm j^2 = \norm{Rj}^2=\norm{Zj}^2+\norm{Nj}^2$, so
\begin{equation}
	\dnorm{Zj}^2 = \int_X (1-n^2)\abs j^2\,d\mu = \dnorm{\sqrt{1-n^2}\,j}^2\!.
\end{equation}
Applying this relation to $h_k-h_0$,
\begin{equation}
	\norm{h_k-h_0} = \norm{n\frac{1-n^{2k}}{\sqrt{1-n^2}}(1-\chi) h_0}.
\end{equation}
Therefore, $h_k-h_0$ lies in the weighted space $L^2(X'',(1-n^2)\mu)$, and, by dominated convergence, converges to a function $h_\infty-h_0\in L^2(X'',(1-n^2)\mu)$. Formally, this latter space can be written $(I-N^2)^{-1/2} (1-\chi) \mathbf C$, establishing~\eqref{e:space-of-maf-convergence}.

\paragraph{Density of $\mathcal Q$}

Decompose $X$ as the disjoint union of the family of sets
\begin{nalign}
	X_{-1}	&= n^{-1}(\{-1,0,1\});\\
	X_i		&= n^{-1}((-1+2^{-i-1},-1+2^{-i})\cup(1-2^{-i},1-2^{-i-1})) \qquad i=0,1,\dotsb.
\end{nalign}
Let $h_0^{\smash{(i)}}=h\cdot\mathbf 1_{X_{-1}\sqcup\dotsb\sqcup X_i}$, where $\mathbf 1_A$ denotes the indicator function of $A\subseteq X$. Then $h_0^{\smash{(i)}}\to h_0$ in $L^2(X,\mu)$. Using the fact that $Zn=-nZ$ on $\mathbf H^\star$, as before the $k\mith$ partial sum of the Neumann series for $h_0^{\smash{(i)}}$ is
\begin{equation}
	h^{(i)}_k = h^{(i)}_0 + n\frac{1-n^{2k}}{1-n^2} Zh^{(i)}_0
		= h^{(i)}_0 - Zn\frac{1-n^{2k}}{1-n^2} (1-\chi) h^{(i)}_0.
\end{equation}
Since either $n=\pm1$ (so that $1-\chi=0$) or $\abs n<1-2^{-i-1}$, the multiplier $n\frac{1-n^{2k}}{1-n^2} (1-\chi)$ is bounded in $k$ and the Neumann series converges in $\mathbf C$. Hence $h^{\smash{(i)}}_0\in\mathcal Q$ for all $i$, proving $\mathcal Q$ is dense.

\paragraph{Proof of $R \chi h_0=h\DT$}

When $h_k$ converges in $\mathbf C$, by Theorem~\ref{t:basic-maf} we have
\begin{equation}
\lim_{k\to\infty} R_{-T}\clsr\pi R_{2T}h_k=h\DT.
\end{equation}
The left hand side is equal to $R\chi h_0$; hence for $h_0\in\mathcal Q$,
\begin{equation}
	R\chi h_0=h\DT.
	\label{e:restricted-chi-dt-link}
\end{equation}
By the unitarity of $R$ and~\eqref{e:harmonic-ext-bound}, $h_0\mapsto h\DT$ is a continuous map from $\mathbf H$ to $\mathbf C$. The left-hand side is likewise continuous in $h_0$. So, since $\mathcal Q$ is dense in $\mathbf H$,~\eqref{e:restricted-chi-dt-link} holds for all $h_0\in\mathbf H$. This together with our earlier work establishes~\eqref{e:maf-interior-limit}. By the same argument, $h\DT=\lim_{k\to\infty} R_{-s}\clsr\pi_{T-s} R_{T+s}h_k$ for any $s\in[0,T]$.
\end{proof}

\begin{proof}[Proof of Proposition~\ref{p:basic-energy}]

Equation~\eqref{e:basic-E-recovery} follows directly from~\eqref{e:basic-maf-behavior}:
\begin{nalign}
	\En(h\DT) = \En(R_{-T} \clsr\pi R_{2T} h_\infty) = \En(\clsr\pi R h_\infty) = \En(R h_\infty) - \En(\pi^\star R h_\infty) = \En(h_\infty) - \En(\pi^\star R h_\infty).
\end{nalign}

For \eqref{e:basic-KE-recovery}, let $v(t,x)=(F\clsr\pi R_{2T}h_\infty)(t-2T,x)$, as in the proof of Theorem~\ref{t:basic-maf}.
Subtract its time-reversal to get the solution $w(t,x)=v(t,x)-v(2T-t,x)$, and as before write $\mathbf v=(v,\d_t v)$, $\mathbf w=(w,\d_t w)$. Consider the energy of $\mathbf w$ at $t=T$. Now $w(T,\cdot)=0$ everywhere and $\d_tw = 2\d_t v=0$ on $\Theta_T^\star$ (as shown by the proof of Theorem~\ref{t:basic-maf}), so the only energy of $\mathbf w$ at time $T$ is inside $\Theta_{T}$:
\begin{nalign}
	\En(\mathbf w(T,\cdot)) &= \int_{\RR^n} c^{-2} \dabs{\d_t w(T,\cdot)}^2\,dx = \int_{\RR^n} c^{-2} \dabs{2\d_t v(T,\cdot)}^2\,dx\\
	&\qquad\qquad\quad = 4\,\KE_{\Theta_T}(\mathbf v(T,\cdot)) \eqFSP
	4\,\KE_{\Theta_T}(R_Th_\infty) \eqFSP 4\,\KE_{\Theta_T}(R_Th_0).
	\label{e:KE-w-link}
\end{nalign}
The last two equalities are by finite speed of propagation, as in~\eqref{e:double-FSP}. By conservation of energy,
\begin{align}
	\En(\mathbf w(T,\cdot)) = \En(\mathbf w(2T,\cdot)) &= \En(\clsr\pi R h_\infty - \clsr\pi R\clsr\pi R h_\infty).
	\label{e:w-energy}
\end{align}
Expanding out the energy norm on the right hand side,
\begin{equation}
	4\,\KE_{\Theta_T}(R_Th_0) = \dnorm{\clsr\pi R h_\infty}^2 + \dnorm{\clsr\pi R \clsr\pi Rh_\infty}^2 - 2\tform{\clsr\pi R h_\infty,\clsr\pi R \clsr\pi Rh_\infty}.
\end{equation}
Using $\clsr\pi R\clsr\pi Rh_\infty + \clsr\pi R\pi^\star Rh_\infty=\clsr\pi h_\infty=h_0$, and $\pi^\star R\clsr\pi Rh_\infty=0$,
\begin{nalign}
	\dnorm{\clsr\pi R h_\infty}^2 &= \dnorm{Rh_\infty}^2 - \dnorm{\pi^\star Rh_\infty}^2
	\\
	&= \dnorm{h_\infty}^2 - \dnorm{\pi^\star Rh_\infty}^2;\\
	\dnorm{\clsr\pi R \clsr\pi Rh_\infty}^2 &= \dnorm{h_0 - \clsr\pi R \pi^\star Rh_\infty}^2\\
	&= \dnorm{h_0}^2 + \dnorm{\clsr\pi R\pi^\star Rh_\infty}^2 - 2\form{h_0, \clsr\pi R\pi^\star Rh_\infty}\\
	&= \dnorm{h_0}^2 + \dnorm{\pi^\star Rh_\infty}^2 - \dnorm{\pi^\star R\pi^\star Rh_\infty}^2 - 2\tform{h_0, R\pi^\star Rh_\infty};\\
	\form{\clsr\pi R h_\infty,\clsr\pi R \clsr\pi Rh_\infty}
	&=
	\form{R h_\infty, R \clsr\pi Rh_\infty}
	-
	\form{\pi^\star R h_\infty,\pi^\star R \clsr\pi Rh_\infty}	
	\\
	&=
	\form{h_\infty, \clsr\pi Rh_\infty}\\
	&=
	\tform{h_0, Rh_\infty}.
	\label{e:expand-ke-energy-diff}
\end{nalign}
Recalling $\pi^\star R\pi^\star R h_\infty = h_0-h_\infty$ and simplifying yields~\eqref{e:basic-KE-recovery}.
\end{proof}

\begin{proof}[Proof of Proposition~\ref{p:limit-energy}]

\hfill			

\paragraph{Proof of~\eqref{e:limit-E-recovery}}

The energy recovery formula follows directly from Theorem~\ref{t:limit-maf}:

\begin{nalign}
	\lim_{k\to\infty}\left[\En(h_k)-\En(\pi^\star Rh_k)\right]
		&=
	\lim_{k\to\infty} \dnorm{Rh_k}^2-\dnorm{\pi^\star Rh_k}^2\\
		&=
	\lim_{k\to\infty} \dnorm{\clsr\pi Rh_k}^2\\
		&=
	\dnorm{h\DT}^2.
\end{nalign}

\paragraph{Proof of~\eqref{e:limit-KE-recovery}}

The proof is similar to~\eqref{e:basic-KE-recovery}, but with extra terms.
By~\eqref{e:KE-w-link}--\eqref{e:expand-ke-energy-diff}, $h_\infty$ satisfies
\begin{align}
	4\,\KE_{\Theta_T}(R_Th_0)
		&=
	\En(\clsr\pi R h_\infty - \clsr\pi R\clsr\pi R h_\infty)
	\label{e:ke-expansion-h-infinity-1}
	\\
		&=
	\En(h_\infty)+\En(h_0)-\En(\pi^\star R \pi^\star R h_\infty) - 2\tform{h_0,R \pi^\star Rh_\infty + R h_\infty}.
	\label{e:ke-expansion-h-infinity-2}
\end{align}
For $h_k$, we must modify the second equality as $\pi^\star R\clsr\pi R h_k$ is no longer zero. Instead, write $\pi^\star R\clsr\pi Rh_k$ as $\pi^\star h_k-\pi^\star R\pi^\star Rh_k$ to obtain
\begin{nalign}
	\En(\clsr\pi R h_k - \clsr\pi R\clsr\pi R h_k) &= \En(h_k)+\En(h_0)-\En(\pi^\star R \pi^\star R h_k) \\
		&\qquad\qquad + 2\tform{\pi^\star Rh_k,\pi^\star h_k- \pi^\star R\pi^\star Rh_k}- 2\tform{h_0,R \pi^\star Rh_k + R h_k}.
	\label{e:ke-expansion-h-k}
\end{nalign}
The right-hand side is the quantity in the limit in~\eqref{e:limit-KE-recovery}. As $k\to\infty$, it converges to~\eqref{e:ke-expansion-h-infinity-2} by continuity as long as $h_0\in\mathcal Q$; hence its limit is $4\,\KE_{\Theta_T}(R_Th_0)$. This proves~\eqref{e:limit-KE-recovery} when $h_0\in\mathcal Q$. Then, by continuity and the density of $\mathcal Q$, \eqref{e:limit-KE-recovery} must hold for all $h_0\in\mathbf H$.

Interestingly, to obtain kinetic energy we used initial data
\begin{nalign}
	\lim_{k\to\infty} \left[\clsr\pi R h_k - \clsr\pi R\clsr\pi R h_k\right]= R\chi h_0-\clsr\pi\chi h_0 = (n-1)\chi h_0,
\end{nalign}
equal to $-2$ times the projection of $h_0$ onto $L^2(n^{-1}(\{-1\}),\mu)$.
\end{proof}

\begin{proof}[Proof of Lemma~\ref{l:characterization-of-It}]
	The proof is essentially that of the Dirichlet principle. First, while ${\mathbf H} = {\mathbf H}_t^\star \oplus \mathbf I_t\oplus {\mathbf H}_t$, we note that also (with tildes)
	\begin{equation}
		\tilde{\mathbf H} = \tilde{\mathbf H}_t^\star \oplus \mathbf I_t\oplus {\mathbf H}_t.
	\end{equation}
	This is true simply because $\mathbf I_t$ is orthogonal to $\mathbf G$ and hence to $\tilde{\mathbf H}_t^\star={\mathbf H}_t^\star\oplus \mathbf G$.
	
	Now, for one direction of the proof, consider an arbitrary $i=(i_0,i_1)\in\mathbf I_t$. Since $\Theta_t$ is Lipschitz, its boundary has measure zero, so $L^2(\Upsilon)=L^2(\Theta_{t}^\star)\oplus L^2(\Theta_{t})$. Hence $i_1$ must be zero.
	
	Let $\phi\in \mathbf H_t$ be nonzero and $a>0$. Then $\norm{i+a\phi}^2=\norm i^2+a^2\norm\phi^2>\norm i^2$ by orthogonality. Hence $a=0$ is a local minimum of $\norm{i+a\phi}^2$, and the derivative of this quantity with respect to $a$ is zero at $a=0$:
	
	\begin{nalign}
		0 = \left.\smd a \dnorm{i+a\phi}^2\right|_{a=0} = 2\dform{i,\phi} = 2\int_{\mathrlap{\Upsilon}} \quad \gradient i_0\cdot\gradient\phi_0.
		\label{e:i0-weakly-harmonic}
	\end{nalign}
	Since $i_0$ is weakly harmonic on $\Theta_{t}$, it is strongly harmonic; in the same way it is harmonic on $\Theta_{t}^\star$.
	
	Conversely, if $i_0\in H^1_0(\Upsilon)$ is harmonic on $\Upsilon\setminus\bdy\Theta_t$, it is weakly harmonic, immediately implying $(i_0,0)$ is orthogonal to $\mathbf H_t$ and $\mathbf H_t^\star$.
\end{proof}

\begin{proof}[Proof of Proposition~\ref{p:only-neumann}]
	First, we have the equivalence
	\begin{equation}
		(I-\pi^\star R\pi^\star R)h_\infty=h_0
		\iff
		(I-\pi^\star R\pi^\star R)(h_\infty-h_0)=\pi^\star R\pi^\star Rh_0.
	\end{equation}
	Since $\pi^\star R\pi^\star$ is self-adjoint and $\norm{\pi^\star R\pi^\star}\leq 1$ (cf.\ the proof of Theorem~\ref{t:limit-maf}),
	it suffices to apply the following lemma.
\end{proof}

\begin{lemma}
	Let $A$ be a self-adjoint linear operator on a Hilbert space $X$ with $\norm A\leq 1$. If $x,y\in X$ satisfy $(I-A^2)y=x$, then the Neumann series $\sum_{k=0}^\infty A^{2k}x$ converges to the minimal-norm solution $y=y^*$ to $(I-A^2)y=x$.
	\label{l:only-neumann}
\end{lemma}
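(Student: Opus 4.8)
The plan is to use the spectral theorem to diagonalize the self-adjoint operator $A$, reducing the statement to a pointwise computation about the scalar function $1-a^2$ for $a\in[-1,1]$. First I would invoke the spectral theorem to identify $X$ with $L^2(Y,\sigma)$ for some measure space $(Y,\sigma)$, upon which $A$ acts as multiplication by a measurable function $a$ with $|a|\le 1$ $\sigma$-a.e. Under this identification, $A^2$ becomes multiplication by $a^2$, so the equation $(I-A^2)y=x$ becomes $(1-a^2)y=x$ pointwise, and the Neumann series $\sum_{k=0}^\infty A^{2k}x$ becomes $\sum_{k=0}^\infty a^{2k}x$.

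Next I would analyze the series pointwise. On the set $\{|a|<1\}$ the geometric series $\sum_k a^{2k}$ converges to $(1-a^2)^{-1}$, so the partial sums converge pointwise to $(1-a^2)^{-1}x = y\cdot\mathbf 1_{\{|a|<1\}}$ there (using the hypothesis $(1-a^2)y=x$). On the set $\{|a|=1\}$ we have $a^2=1$, hence $x=(1-a^2)y=0$ $\sigma$-a.e.\ there, so every partial sum is zero on that set. Thus the partial sums $s_N=\sum_{k=0}^N a^{2k}x$ converge pointwise a.e.\ to the function $y^*:=(1-a^2)^{-1}x\cdot\mathbf 1_{\{|a|<1\}}$. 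I would then check that this limit lies in $L^2$: since $(1-a^2)^{-1}x$ agrees with $y$ on $\{|a|<1\}$ and $y\in X$, the candidate limit $y^*$ is just $y$ restricted to $\{|a|<1\}$ (extended by $0$), which is certainly in $L^2(Y,\sigma)$.

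For the $L^2$ convergence of the partial sums (not merely pointwise), I would note that the partial sums are $s_N = \frac{1-a^{2(N+1)}}{1-a^2}x$ on $\{|a|<1\}$ and $0$ on $\{|a|=1\}$, and the multipliers $\frac{1-a^{2(N+1)}}{1-a^2}\mathbf 1_{\{|a|<1\}}$ converge pointwise to $(1-a^2)^{-1}\mathbf 1_{\{|a|<1\}}$ while being dominated in the sense that $|s_N|\le |y^*|\in L^2$ pointwise (using $0\le \frac{1-a^{2(N+1)}}{1-a^2}\le \frac{1}{1-a^2}$ and the finiteness of $|y^*|$ a.e.). Hence dominated convergence gives $s_N\to y^*$ in $L^2$, i.e.\ the Neumann series converges to $y^*$ in $X$. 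Finally, to see $y^*$ is the minimal-norm solution: any solution $y$ of $(1-a^2)y=x$ must agree with $y^*$ on $\{|a|<1\}$ (where $1-a^2$ is invertible), and is unconstrained on $\{|a|=1\}$; since $y^*$ vanishes on $\{|a|=1\}$, we have $\|y\|^2 = \|y^*\|^2 + \|y\mathbf 1_{\{|a|=1\}}\|^2 \ge \|y^*\|^2$, with equality iff $y=y^*$.

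The main obstacle, such as it is, is purely bookkeeping: being careful that the hypothesis $(1-a^2)y=x$ is used to control $x$ on the degenerate set $\{|a|=1\}$ (forcing $x=0$ there), and setting up the dominated convergence argument with the correct dominating function so that $L^2$ convergence — not just a.e.\ or weak convergence — is obtained. There is no deep difficulty; the spectral theorem does all the real work, and everything else is a one-variable computation. One could alternatively avoid the spectral theorem and argue via the functional calculus directly, bounding $\|s_N - s_M\|$ using $\sup_{|a|\le 1}\big|\sum_{k=M+1}^N a^{2k}(1-a^2)\big|$ applied to $y$, but this runs into the same degeneracy at $|a|=1$ and is less transparent, so I would stick with the diagonalization approach.
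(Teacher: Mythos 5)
Your proof is correct and takes essentially the same route as the paper: apply the spectral theorem to realize $A$ as multiplication by $a$ on an $L^2$ space, observe that $x$ vanishes a.e.\ on $\{|a|=1\}$, show the partial sums converge pointwise and monotonically in magnitude to the solution supported on $\{|a|<1\}$, and upgrade to $L^2$ convergence by dominated convergence. You spell out the dominating function and the behavior on the degenerate set a bit more explicitly than the paper does, but the argument is the same.
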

\begin{proof}
	By the spectral theorem, $X$ can be identified with $L^2(W,\mu)$ for some set $W$ and measure $\mu$, upon which $A$ acts as a (real-valued) multiplication operator $a(w)$; also $\norm A\leq 1$ implies $\abs a\leq1$ for all $w\in W$. If $i(w)$ denotes the indicator function of $a^{-1}(\pm1)$, then $y=y^*=iy$ is the minimal-norm solution of $(I-A^2)y=x$.
	
	Let $y_n=y_n(w)=\sum_{k=0}^n a^{2k} x$ be the $n\mith$ partial sum of the Neumann series; then $y_n(w)$ converges monotonically away from zero to $yi$ for each $w$. Hence $y_n\to y^*$ in $L^2(W,\mu)$.
\end{proof}

\begin{proof}[Proof of Proposition~\ref{p:H-star-characterization}]
	Our first task is to characterize $\mathbf G$, the space of functions staying outside $\Theta$ in time $2T$. We make a guess $\mathbf G_1$ for $\mathbf G$ and show that the two are equal by unique continuation, using Lemma~\ref{l:uc-top-and-bottom}. After identifying $\mathbf G$, it will be easy to identify $\mathbf H^\star$, its complement in $\tilde{\mathbf H}^\star$.
	
	First, define
	\begin{nalign}
		\mathbf G_0 &= H_0^1(\Theta^\star_{-2T})\oplus L^2(\Theta^\star_{-2T}),\\
		\mathbf G_1 &= G_0 + R_{2T}G_0.
	\end{nalign}
	By finite speed of propagation, $\mathbf G_0,\, R_{2T}\mathbf G_0\subseteq\mathbf G$, so $\mathbf G_1\subseteq \mathbf G$. We want to show that in fact $\mathbf G=\mathbf G_1$. Accordingly, suppose $g\in\mathbf G$ and $g\perp\mathbf G_1$.%

	Having $g\perp\mathbf G_0$ implies $\pi^\star_{-2T}g = 0$; similarly $g\perp R_{2T}\mathbf G_0$ implies $\pi^\star_{-2T}Rg=0$. That is, the wave field of $g$ is stationary harmonic outside a $2T$-neighborhood of $\Theta$ at $t=0,2T$. As in the proof of Theorem~\ref{t:basic-maf}, we can apply Lemma~\ref{l:uc-top-and-bottom} to (a smoothed version of) $\d_t Fg$ to conclude that $R_Tg$ is stationary harmonic outside a $T$-neighborhood of $\Theta$ at time $T$; i.e.,
	\begin{equation}
		\pi^\star_{-T}R_T g=0.
		\label{e:perp-to-G0-implies}
	\end{equation}
	On the other hand, $g\in \mathbf G$ implies that $\clsr\pi g = \clsr\pi Rg=0$; the wave field of $g$ is zero on $\Theta$ at $t=0,2T$. Applying Lemma~\ref{l:uc-top-and-bottom}, we can conclude that the wave field of $g$ is zero on a $T$-neighborhood of $\Theta$ at time $T$; i.e.
	\begin{equation}
		\clsr\pi_{-T} R_T g = 0.
		\label{e:perp-to-RG0-implies}
	\end{equation}
	Hence $R_T g = \pi^\star_{-T}R_Tg + \clsr\pi_{-T}R_Tg = 0$; we conclude that $g=0$, and therefore $\mathbf G = \mathbf G_1$.
	
	Now, we can prove~\eqref{e:H-star-characterization}. $\mathbf H^\star$ is the complement of $\mathbf G$ in $\tilde{\mathbf H}^\star$.
	For Cauchy data $h\in \tilde{\mathbf C}$,
	\begin{align}
		h \in \tilde{\mathbf H}^\star 	&\iff		\clsr\pi h = 0,
	\end{align}
	and since $\mathbf G = \mathbf G_1$, equations~(\ref{e:perp-to-G0-implies}--\ref{e:perp-to-RG0-implies}) imply
	\begin{align}
		h \perp \mathbf G 	&\iff		h\perp\mathbf G_0 \text{ and } h\perp R\mathbf G_0	\iff	\pi^\star_{-2T} h = 0 \text{ and } \pi^\star_{-2T}Rh = 0. 	\mbox\qedhere
	\end{align}
\end{proof}

\begin{proof}[Proof of Proposition~\ref{p:DN-determines-Cauchy}]
	Let $h\in H^1(\Omega^\star)\oplus L^2(\Omega^\star)$, and let $u_1=F_1h$ be the solution with respect to $c_1$. Define $u_2$ to be the solution of the IBVP~\eqref{e:DN-def} with boundary data $\restr{u_1}_{\RR\times\bdy\Omega}$. Since $c_1$ and $c_2$ have identical Dirichlet-to-Neumann maps, it follows that $\restr{\d_\nu u_1}_{\RR\times\bdy\Omega}=\restr{\d_\nu u_2}_{\RR\times\bdy\Omega}$. Therefore, $u_2$ may be extended to $\RR\times\RR^n$ by setting it equal to $u_1$ outside $\Omega$, and both $u_2$ and $\d_\nu u_2$ will be continuous on $\RR\times\bdy\Omega$. Hence $u_2$ satisfies the wave equation with respect to $c_2$ inside and outside $\Omega$, and satisfies the interface conditions at $\bdy\Omega$. Therefore, it is a solution of the $c_2$ wave equation on all of $\RR^n$~\cite[Theorem 2.7.3]{StolkThesis}. By uniqueness of the Cauchy problem, $u_2 = F_2h$, and by definition $u_2=u_1=F_1h$ on $\Omega^\star$.
\end{proof}

\section{Microlocal analysis of scattering control}				\label{s:microlocal}

In this section, we turn from our exact analysis of scattering control to a study of its microlocal (high-frequency limit) behavior, allowing us to study reflections and transmissions of wavefronts naturally. To accomodate the microlocal analysis, we first narrow the setup somewhat, and consider a microlocally-friendly version of the scattering control equation in~\sref{s:ml-maf}. Section~\ref{s:ml-adt} introduces a natural analogue of the almost direct transmission, based on depths of singularities (covectors), rather than points.

Just as before, isolating the microlocal almost direct transmission is sufficient for solving the microlocal scattering control equation (\sref{s:ml-isolating}). If the wave speed $c$ is known, it is not hard, as~\sref{s:ml-construct} shows, to construct solutions assuming some natural geometric conditions. Our main result, Theorem~\ref{t:ml-convergence}, is that the scattering control iteration converges to a similar solution, to leading order in amplitude, under the same conditions. Finally,~\sref{s:ml-uniqueness} discusses uniqueness for the microlocal scattering control equation. Proofs of the key results follow in~\sref{s:ml-proofs}.

\paragraph{Notation}

Throughout, ``$\eqml$'' denotes equality modulo smooth functions or smoothing operators, and $\To^*M=T^*M\setminus 0$ ($M$ a manifold). A \emph{graph FIO} is a Fourier integral operator associated with a canonical graph. Finally, for a set of covectors $W\subseteq T^*M$, let $\mathcal D'_W$, $\mathcal E'_W$ denote the spaces of distributions with wavefront set in $W$.

\subsection{Microlocal scattering control}	\label{s:ml-maf}

In this section, we begin by restricting $\Omega$ and $c$ suitably in order to study reflection and transmission of singularities. We also adjust the scattering control equation slightly, replacing projections with smooth cutoffs, and employing a parametrix for wave propagation.

Let $\Omega\subseteq\RR^n$ be a smooth open submanifold, and $c$ a piecewise smooth%
	\footnote{As usual, ``smooth'' means $C^\infty$ throughout.}
wave speed that is singular only on a set of disjoint, closed%
		\footnote{If $c$ is singular on some non-closed hypersurface $\Gamma_i$, we may be able to ``close up'' $\Gamma_i$ in such a way that it does not intersect the other hypersurfaces.}%
, connected, smooth hypersurfaces $\Gamma_i$ of $\clsr\Omega$, called \emph{interfaces}. Let $\Gamma=\bigcup\Gamma_i$; let $\{\Omega_j\}$ be the connected components of $\RR^n\setminus\Gamma$. Also assume each smooth piece of $c$ extends smoothly to $\RR^n$.

The projections $\clsr\pi$, $\pi^\star$ arose quite naturally in the exact setting, taking the roles of cutoffs inside and outside $\Theta$. Because they introduce singularities along $\bdy\Theta$, it is natural to replace them by smooth cutoffs for a microlocal study. We will also separate the initial data $h_0$ from the cutoff region. To accommodate both aims, choose nested open sets $\Theta',\,\Theta''$ between $\Omega$ and $\Theta$:
\begin{equation}
	\Omega\subseteq\Theta' \subseteq\clsr{\Theta'}\subseteq\Theta''\subseteq\clsr{\Theta''}\subseteq\Theta,
\end{equation} 
and smooth cutoffs $\sigma,\sigma^\star\colon\RR^n\to[0,1]$ such that
\begin{align}
	\sigma(x) &= \when{1, & x\in\Theta''\!,\\
				 0, & x\notin\Theta,}
	&
	&\supp\sigma = \Theta,
	\\
	\sigma^\star &= 1 - \sigma,
	&
	&\supp\sigma^\star = \RR^n\setminus\Theta''.
\end{align}
The sets $\Theta',\,\Theta''$ should be thought of as arbitrarily close to $\Theta$; we will write $\Theta'^\star=\RR^n\setminus\clsr{\Theta'}$.

Finally, a standard parametrix $\tilde R$ accounting for reflections and refractions will frequently replace the exact propagator $R$, discussed at greater length in Appendix~\ref{s:parametrix-construction}. Most importantly, $\tilde R$ includes microlocal cutoffs along glancing rays, so that $Rh_0\eqml\tilde Rh_0$ as long as $\WF(h_0)$ is disjoint from a set of covectors $\mathcal W\subset T^*(\RR^n\setminus\Gamma)$ producing near-glancing broken bicharacteristics.

The object of study is now the
 \emph{microlocal scattering control equation}
\begin{equation}
	(I - \sigma^\star R\sigma^\star R) h_\infty \eqml h_0,
												\label{e:ml-maf}
\end{equation}
and accompanying formal Neumann series
\begin{equation}
	h_\infty \eqml \sum_{i=0}^\infty (\sigma^\star R)^{2i} h_0.
												\label{e:ml-maf-series}
\end{equation}
In general, the operator $(\sigma^\star R)^2$ preserves but does not improve Sobolev regularity, preventing us from assigning any meaning to this infinite sum \emph{a priori}.\footnote{Were $(\sigma^\star R)^2$ to have negative Sobolev order,~\eqref{e:ml-maf-series} may be interpreted as an asymptotic series. This situation occurs, for example, for $c$ with $C^{1,\alpha}$ or weaker singularities~\cite{dHUV}, in the absence of diving rays.}
Instead, we will consider the limiting behavior of its partial sums.

\subsection{Microlocal almost direct transmission}		\label{s:ml-adt}

\begin{figure}[tb]
	\centering
	\hbox{}\hfill
	\subfloat[Wavefront set of solution at time $T$]{\includegraphics[page=1]{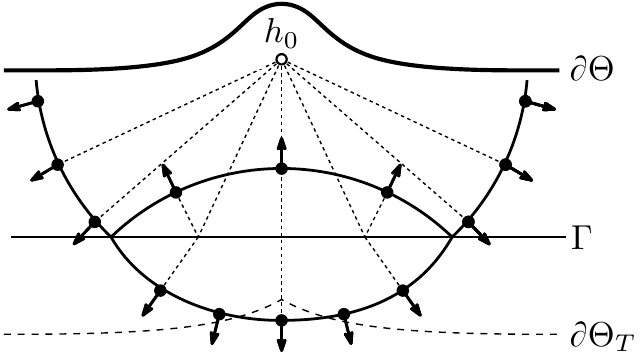}}
	\hfill
	\subfloat[Wavefront set of microlocal almost direct transmission]{\includegraphics[page=2]{Figures/NewMicroADT.pdf}}
	\hfill\hbox{}

	\hbox{}\hfill
	\subfloat[Wavefront set of almost direct transmission]{\includegraphics[page=3]{Figures/NewMicroADT.pdf}}
	\hfill\hbox{}
	
	\caption{Microlocal almost direct transmission. (a) The wavefront set of the solution with point source $h_0$ includes reflected and refracted singularities due to an interface $\Gamma$. (b) The microlocal almost direct transmission does not include the reflected singularities; their depth is less than $T$. (c) Wavefront set of the (non-microlocal) almost direct transmission, for comparison.}

	\label{f:ml-adt}
\end{figure}

The almost direct transmission played a central role in the exact analysis of scattering control. We begin by studying its natural microlocal analogue. Intuitively, the \emph{microlocal almost direct transmission} $h\MDT$ is the microlocal restriction of the solution at time $T$ to singularities in $\To^*\Theta$ whose distance from the surface $\bdy T^*\Theta$ is at least $T$ (Figure~\ref{f:ml-adt}). 
The distance here should be defined as the length of the shortest broken bicharacteristic segment connecting a covector to the boundary (Figure~\ref{f:cotangent-depth}). In general, our $h\MDT$ is not equivalent to the ideal direct transmission, which would contains only transmitted waves, but it may still serve as a useful proxy.

In the remainder of the section, we briefly define distance in the cotangent bundle, then use it to define the microlocal almost direct transmission $h\MDT$.

\paragraph{Distance in the Cotangent Bundle}

Let $V = \RR\times(\RR^n\setminus\Gamma)$.
For brevity, we shall simply say $\gamma\colon(s_-,s_+)\to \To^*V_\pm$ is a \emph{bicharacteristic} if it is a bicharacteristic for $\d_t^2-c^2\Delta$; is \emph{unit speed}, i.e., $dt/ds=1$ on $\gamma$; and is \emph{maximal}, i.e., cannot be extended. Here $s_\pm$ may be infinite.

\begin{figure}
	\centering
	\includegraphics{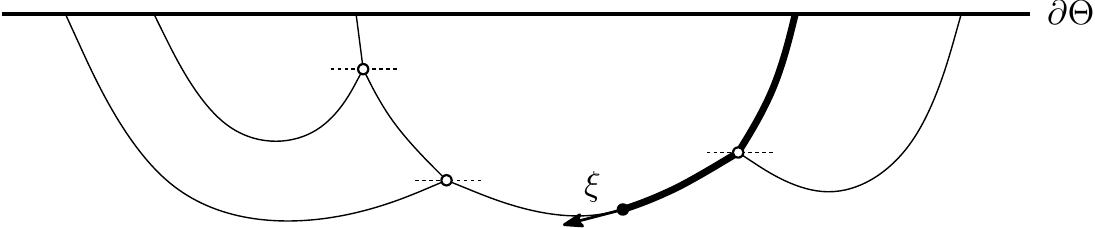}
	
	\caption{Depth of a singularity. The broken bicharacteristic segments joining covector $\xi$ to the boundary are shown, projected to $\RR^n$ (solid); they reflect and refract at interfaces (dotted lines). The depth of $\xi$ in $T^*\Theta$ is defined as the length of the shortest of these paths to the boundary (bold).}
	\label{f:cotangent-depth}
\end{figure}

A \emph{broken bicharacteristic} $\gamma\colon(s_0,s_1)\cup(s_1,s_2)\cup\dotsb\cup(s_{k-1},s_k)\to \To^*V$ is a sequence of bicharacteristics connected by reflections and refractions obeying Snell's law: for $i=1,\dotsc,k-1$,
\begin{align}
	\gamma(s_i^-), \; \gamma(s_i^+) &\in \To^*([0,2T]\times\Gamma),
	&
	(di_\Gamma)^*\gamma(s_i^-)&=(di_\Gamma)^*\gamma(s_i^+),
	\label{e:broken-bichar-interfaces}
\end{align}
where $i_\Gamma\colon\Gamma\hookrightarrow\Omega$ is inclusion. Since any broken bicharacteristic may be parameterized by time, we will often abuse notation and consider $\gamma$ as a map from $t\in\RR$ into $\To^*\mathbf(\RR^n\setminus\Gamma)$.

The \emph{distance} of a covector $\xi\in\To^*(\RR^n\setminus\Gamma)$ from the boundary of $M\subseteq\RR^n$ is
\begin{equation}
	d(\xi,\,\bdy T^*M) = \min \cset{\abs{a-b}}{\gamma(a) = \xi,\ \gamma(b)\in\bdy T^*\mathbf M},
\end{equation}
the minimum taken over broken bicharacteristics $\gamma$. Extend $d(\cdot,\bdy T^*\mathbf M)$ to all $\xi\in\To^*\RR^n$ by lower semicontinuity. In general, $d$ will not be continuous at $\To^*(\RR\times\Gamma)$.

\emph{Depth} is the same as distance, but with a sign indicating whether $\xi$ is inside or outside $M$:
\begin{equation}
	d^*_{T^*M}(\xi) = \when{
		+d(\xi,\,\bdy T^*M), &		\xi\in T^*\mathbf M,\\
		-d(\xi,\,\bdy T^*M), &			\text{otherwise}.%
	}
\end{equation}
\begin{figure}[p]
	\centering
	\includegraphics[page=1]{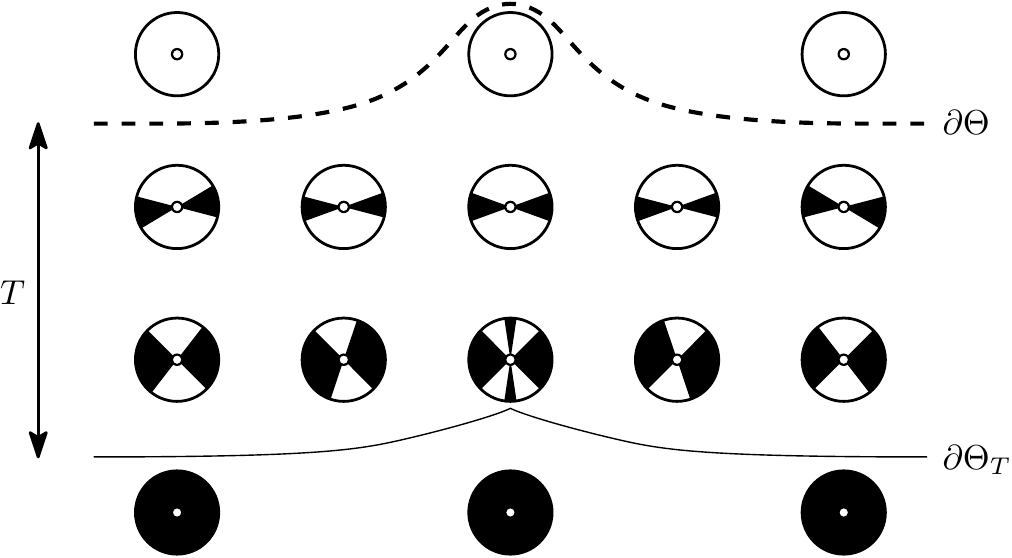}
	
	\caption{Example of a depth sublevel set $(T^*\Theta)_T$, with wave speed $c=1$. Each marked circle describes the unit covectors based at its center point: those inside $(T^*\Theta)_T$ are marked in black, those outside in white. Near the boundary, $(T^*\Theta)_T$ contains only nearly horizontal covectors, while below $\Theta_T$ it contains covectors in all directions, as the distance to the surface in any direction is greater than $T$.}
	\label{f:ml-depth-set}
\end{figure}%

\paragraph{Microlocal Almost Direct Transmission}

Let $(T^*M)_t$ be the set of covectors of depth greater than $t$ in a manifold $M$:
\begin{equation}
	(T^*M)_t = \set{\xi\in T^*M}{d^*_{T^*M}(\xi)>t}.
\end{equation}
Figure~\ref{f:ml-depth-set} illustrates $(T^*M)_t$ in a simple case. Note $(T^*M)_t\supsetneq T^*(M_t)$ in general, where $M_t$ is defined as in~\eqref{e:def-Theta-t-star}.

A \emph{microlocal almost direct transmission} of $h_0$ at time $T$ is a distribution $h\MDT$ satisfying
\begin{align}
	h\MDT  &\eqml R_Th_0 \quad\text{on $(T^*\Theta')_T$}
	&
	\WF(h\MDT) &\subseteq \clsr{(T^*\Theta'')_T}.
	\label{e:ml-adt}
\end{align}
Essentially, $h\MDT$ is any sufficiently sharp microlocal cutoff of $R_Th_0$ outside $(T^*\Theta')_T$. Note that there is a gap $G=\clsr{(T^*\Theta'')_T}\setminus (T^*\Theta')_T$ in which we do not characterize $h\MDT$; the gap is needed in case $\WF(R_Th_0)$ intersects $\bdy(T^*\Theta')_T$, since then the cutoff may not be infinitely sharp. The solutions of~\eqref{e:ml-adt} form an equivalence class modulo $\mathcal D'_G+C^\infty(\RR^n)$, since any two choices of $h\MDT$ differ exactly by a distribution with wavefront set in $G$. With this equivalence class in mind, we denote by $h\MDT$ any solution of~\eqref{e:ml-adt} and refer to it simply as \emph{the} microlocal almost direct transmission. Note that
\begin{equation}
	\begin{array}{r@{\;}c@{\;}l}
		\WF(h\MDT) 	& \subset 	& (T^*\Theta)_T\\
					&		& \quad\rotatebox{90}{$\subset$}\\
		\WF(h\DT) 	& \subset 	& T^*(\Theta_T).\\					
	\end{array}
\end{equation}
\begin{figure}
	\centering
	\hbox{}\hfill
	\subfloat[Depths of singularities in wave field]{\quad\includegraphics[page=1]{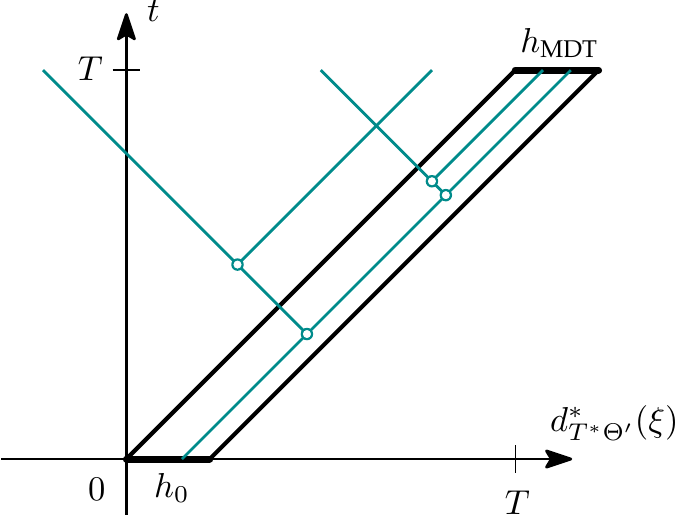}}
	\hfill
	\subfloat[Wave field of $h_0$]{\qquad\includegraphics[page=2]{Figures/MDTDepthDiagram.pdf}\qquad}
	\hfill\hbox{}
	\caption{Microlocal almost direct transmission: $h\MDT$ contains the singularities in $R_Th_0$ of depth at least $T$ in $T^*\Theta'$. (a) Depth diagram; interfaces marked with small circles. (b) Projection onto $\RR^n$; interfaces dotted.}
	\label{f:ml-mdt-depth}
\end{figure}%
It is natural to visualize $h\MDT$ with a \emph{depth diagram} plotting the depths of the wave field's singularities over time (Figure~\ref{f:ml-mdt-depth}). The depth of a singularity traveling along any broken bicharacteristic $\gamma$ is a piecewise linear function of time, with derivative $\pm1$ almost everywhere, so a depth diagram consists of line segments of slope $\pm1$. Note that the depth of $\gamma(t)$ is (up to sign) the shortest distance from $\gamma(t)$ to the surface along \emph{any} broken bicharacteristic, not only along $\gamma$.

\begin{figure}
	\centering
	
	\centering
	\hbox{}\hfill
	\subfloat[Ray configuration with one interface]{\includegraphics{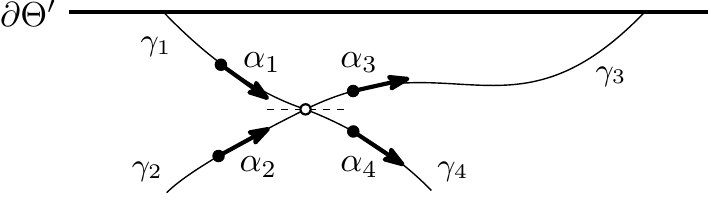}}
	\hfill
	\subfloat[Depth diagram of bicharacteristics $\gamma_i$]{\quad\includegraphics{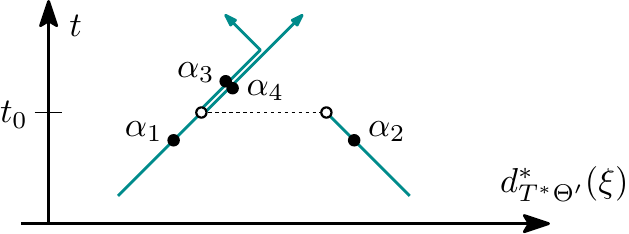}\label{f:discont-depth-rd-diagram}}
	\hfill\hbox{}

	\caption{Depth discontinuity at interfaces. (a) Covectors $\alpha_3$, $\alpha_4$ are closer to the boundary (via $\gamma_1$) than $\alpha_2$, which cannot take this path. (b) Depths of the positive bicharacteristics $\gamma_i$ through these $\alpha_i$, meeting the interface at time $t_0$. A jump occurs at the interface along either broken bicharacteristic through $\alpha_2$.}
	\label{f:discont-depth}
\end{figure}

\begin{remarks}\hfill	
\begin{itemize}
\item
	Along a broken bicharacteristic, $d^*_{T^*\Theta'}$ is often discontinuous at interfaces, as illustrated in Figure~\ref{f:discont-depth}.
	
	To see why, consider a bicharacteristic $\gamma_1$ encountering an interface; let $\gamma_3,\gamma_4$ be the reflected and transmitted bicharacteristics, and let $\gamma_2$ be the opposite incoming bicharacteristic. In general, one of the $\gamma_i$, say $\gamma_1$, provides the shortest route from the interface to the boundary. Singularities along $\gamma_3$ or $\gamma_4$ can reach the boundary along $\gamma_1$, while those along $\gamma_2$ cannot and must take a longer path. Consequently, a jump in depth occurs when passing from $\gamma_2$ to either $\gamma_3$ or $\gamma_4$.

\item
	Along a singly reflected bicharacteristic, depth does not switch from increasing to decreasing at the moment of reflection in general. Instead, depth will change from increasing to decreasing halfway along; compare the broken bicharacteristic $\gamma_1\cup\gamma_3$ in Figure~\ref{f:discont-depth}.

\item
	Depth (and hence $h\MDT$) cannot intrinsically distinguish reflections from transmissions. This is possible only under geometric assumptions ensuring that reflected waves travel toward the boundary, and transmitted waves travel away from it; e.g., $\Theta=\{x_n>0\}$ a half\-space, and $c$ a function of $x_n$ alone.
	
\end{itemize}
\end{remarks}

\subsection{Isolating the microlocal almost direct transmission}			\label{s:ml-isolating}

One of our earlier key facts, expressed in Theorem~\ref{t:basic-maf}, is that solving the (exact) scattering control equation $(I-\pi^\star R\pi^\star R)h_\infty=h_0$ for $h_\infty$ is equivalent to isolating the almost direct transmission: $\clsr\pi R_{2T}h_\infty=R_Th\DT$ (assuming $h_\infty=h_0$ on $\clsr{\Theta}$). In other words, the wave field of $h_\infty$ at $t=2T$ inside the domain $\Theta$ is exactly the almost direct transmission's wave field, undisrupted by any waves from shallower regions.

\begin{figure}
	\centering
	\mbox{}
	\hfill
	\subfloat[Wave field of $h_0$]{%
		\includegraphics[page=1]{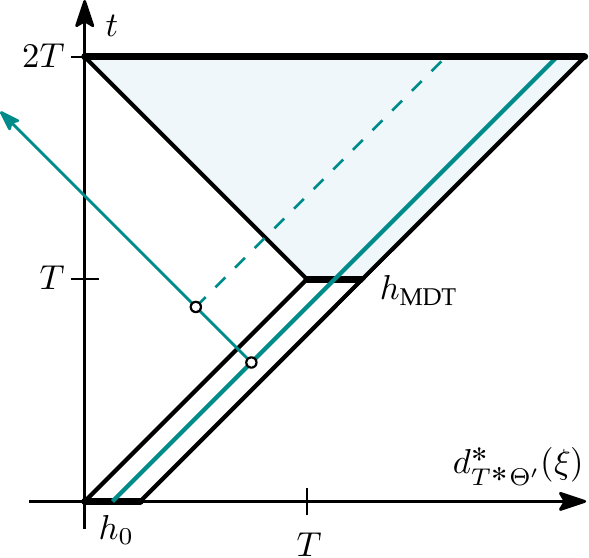}%
		\label{f:isolate-mdt-before}%
	}
	\hfill\hfill
	\subfloat[Wave field of $h_\infty$]{%
		\includegraphics[page=2]{Figures/NewIsolatingMDT.pdf}%
		\label{f:isolate-mdt-after}%
	}
	\hfill\hfill
	\mbox{}
		
	\caption{Isolating $h\MDT$. A singularity from $h_0$ travels inward, reflecting and refracting from two interfaces (indicated by open circles). The multiply-reflected ray (dotted) will enter the domain of influence of $h\MDT$ (shaded). To prevent this, $h_\infty$ must include an appropriate singularity to eliminate the multiply-reflected ray. The horizontal axis is depth in the cotangent bundle.
	}
	\label{f:isolate-mdt}
\end{figure}

Our main goal now is to consider the microlocal version of this equivalence: is solving the microlocal scattering control equation~\eqref{e:ml-maf} equivalent to isolating $h\MDT$? As before, one direction is easy: if a tail $h_\infty$ is found that isolates $h\MDT$ (in the sense that $R_{2T}h_\infty\eqml R_Th\MDT$ on $\Theta$) it is a solution of~\eqref{e:ml-maf}. The idea behind crafting such an $h_\infty$ we have seen already in Figure~\ref{f:mr-demo}: $h_\infty$ should include appropriate extra singularities that ensure singularities in the wave field of $h_0$ at depth less than $T$ do not interfere with $h\MDT$'s wave field. Figure~\ref{f:isolate-mdt} illustrates the situation.

\begin{lemma}
	Let $h_0\in \mathcal E'(\Theta'\setminus\Gamma)\oplus \mathcal E'(\Theta'\setminus\Gamma)$. Suppose $h_\infty\in \mathcal E'(\RR^n\setminus\Gamma)\oplus \mathcal E'(\RR^n\setminus\Gamma)$ isolates the microlocal almost direct transmission, in the sense that
	\begin{equation}
	 	\drestr{h_\infty}_{\Theta}\eqml \drestr{h_0}_{\Theta} \text{\ \ and } \drestr{R_{2T}h_\infty}_{\Theta}\eqml\drestr{R_Th\MDT}_{\Theta}.
	\end{equation}
	Then $h_\infty$ satisfies the microlocal scattering control equation, $(I-\sigma^\star R\sigma^\star R)h_\infty \eqml h_0$. The same holds true with $\tilde R$ replacing $R$.
	
	\label{l:isolate-mdt}
\end{lemma}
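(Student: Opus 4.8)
The plan is to mimic the structure of the proof of the exact version (the $\Leftarrow$ direction of~\eqref{e:basic-maf-behavior} in Theorem~\ref{t:basic-maf}), but replacing the orthogonal projections $\clsr\pi,\pi^\star$ with the smooth cutoffs $\sigma,\sigma^\star$ and working modulo smoothing operators. The key algebraic identity to exploit is $\sigma+\sigma^\star = 1$, which gives $(I-\sigma^\star R\sigma^\star R) = (I - \sigma^\star R(\sigma+\sigma^\star)R)(\text{up to what $\sigma R\sigma R$ contributes})$; more precisely $I - \sigma^\star R\sigma^\star R = (I-\sigma^\star) + \sigma^\star R\sigma R$, so applying this to $h_\infty$,
\begin{equation}
	(I-\sigma^\star R\sigma^\star R)h_\infty = (I-\sigma^\star)h_\infty + \sigma^\star R\sigma R\, h_\infty.
\end{equation}
The first term is $\sigma h_\infty$, and since $h_\infty\eqml h_0$ on $\Theta$ while $\sigma$ is supported in $\Theta$ and equals $1$ on $\Theta''\supseteq\supp h_0$, we get $\sigma h_\infty \eqml \sigma h_0 \eqml h_0$. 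So the whole argument reduces to showing that the second term is smoothing: $\sigma^\star R\sigma R\,h_\infty \eqml 0$.

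To handle $\sigma^\star R\sigma R\, h_\infty$, I would argue microlocally as in the proof of Theorem~\ref{t:basic-maf}. Let $v(t,x) = (F\sigma R_{2T}h_\infty)(t-2T,x)$ be the wave field generated (backward from $t=2T$) by $\sigma R_{2T}h_\infty$; equivalently its Cauchy data at $t=2T$ is $\sigma R_{2T}h_\infty$. By the hypothesis $R_{2T}h_\infty\eqml R_T h\MDT$ on $\Theta$ and the fact that $\sigma$ is supported in $\Theta$, we have $\sigma R_{2T}h_\infty \eqml \sigma R_T h\MDT$, so $v$ agrees microlocally with the wave field of $\sigma R_T h\MDT$. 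Now $\WF(h\MDT)\subseteq\clsr{(T^*\Theta'')_T}$, i.e. all its singularities have depth at least (essentially) $T$ in $T^*\Theta''$; propagating backward in time by $T$ along broken bicharacteristics, the depth can drop by at most $T$, so every singularity of $R_{-T}h\MDT$ still lies in $\clsr{T^*\Theta''}$ — in particular, using that the cutoff $\sigma^\star$ vanishes on $\Theta''$, the composition $\sigma^\star$ applied to $R_{-T}(\sigma R_T h\MDT)$ is smoothing. This is exactly the statement that $\sigma^\star R\sigma R\,h_\infty \eqml \sigma^\star R_{-2T}\sigma R_T h\MDT\eqml 0$: we propagate $h\MDT$ back from $t=T$ to $t=0$ (net time $-T$ after the inner $R_{2T}=\nu R_{4T}$... — more carefully, track that $R\sigma R h_\infty$ has Cauchy data at $t=0$ obtained by propagating $\sigma R_{2T}h_\infty\eqml \sigma R_T h\MDT$ backward by $2T$, i.e. to $R_{-2T}\sigma R_T h\MDT$, whose wavefront set lies in $\clsr{T^*\Theta''}$ by the depth estimate, and $\sigma^\star$ kills that).

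The main obstacle is making the depth-propagation estimate precise: I must verify that $\WF(R_{-T}h\MDT)$ stays inside $\clsr{T^*\Theta''}$, which requires the statement that along a broken bicharacteristic the depth function $d^*_{T^*\Theta''}$ changes at rate at most $1$ in absolute value (this is noted in the text after Figure~\ref{f:ml-mdt-depth}), and that singularities of $h\MDT$ have depth $\geq T$ up to the gap $G$. There is a subtlety at glancing/grazing directions and at the interfaces $\Gamma$ where $d^*$ is discontinuous — this is precisely why the parametrix $\tilde R$ carries the glancing cutoffs $\mathcal W$, and why the statement includes "the same holds with $\tilde R$ replacing $R$." For the $\tilde R$ version one repeats the argument using propagation of singularities for $\tilde R$ (a finite sum of graph FIOs along broken bicharacteristics), observing that $\tilde R$ respects the same depth bound on wavefront sets away from $\mathcal W$; the cleftover near-glancing contributions are already smoothed by $\tilde R$'s built-in cutoffs. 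The remaining steps — that $\sigma h_\infty\eqml h_0$ and that composing with a smooth cutoff supported off $\clsr{T^*\Theta''}$ is smoothing — are routine microlocal bookkeeping.
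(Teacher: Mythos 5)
Your proof is correct and follows essentially the same strategy as the paper's: decompose $I-\sigma^\star R\sigma^\star R$ using $\sigma+\sigma^\star=1$ and $R^2=I$, identify $\sigma h_\infty\eqml h_0$, and then show $\sigma^\star R\sigma R h_\infty$ is smoothing by tracking the wavefront set of $R_{-T}h\MDT$ under propagation of singularities and the depth bound. The paper's version is more compact (it introduces the auxiliary wave field $\mathbf v$ from $\sigma R_{2T}h_\infty$ and reads off $\sigma^\star R\sigma Rh_\infty=\sigma^\star\mathbf v(0,\cdot)$ directly), but the substance is identical.
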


\begin{proof}
	Let $v(t,x)=(F\sigma R_{2T}h_\infty)(t-2T,x)$ be the wave field generated by $\sigma R_{2T}h_\infty$, and $\mathbf v=(v,\d_t v)$. Since $\WF(h\MDT)\subseteq \clsr{(T^*\Theta'')_T}$, propagation of singularities limits the wavefront set of $R_Th\MDT$ to $\clsr{T^*\Theta''}$, where the cutoff $\sigma$ is identity. Hence $\mathbf v$ at time $2T$ agrees with $R_Th\MDT$. Moving to time $T$, we have $\mathbf v(T,\cdot)\eqml f\MDT$; by propagation of singularities again, $\WF(\mathbf v(0,\cdot))\subseteq \clsr{T^*\Theta''}$. In particular, $\sigma^\star R\sigma Rh_\infty = \sigma^\star \mathbf v(0,\cdot)$ is smooth. We conclude that
	\begin{equation}
		\sigma^\star R\sigma^\star R h_\infty = \sigma^\star R(1 - \sigma) Rh_\infty \eqml \sigma^\star h_\infty - 0 \eqml h_\infty - h_0.
	\end{equation}
	The same argument holds with the parametrix $\tilde R$ in place of $R$.
\end{proof}

Just like Theorem~\ref{t:basic-maf}, Lemma~\ref{l:isolate-mdt} assures us that solving the microlocal scattering control equation is necessary for producing a tail $h_\infty-h_0$ that isolates $h\MDT$.

The other direction of the problem (does a solution of the microlocal scattering control equation isolate $h\MDT$?) is a more subtle question, taken up in the following sections. Our overarching goal is to show that $h\MDT$, like its non-microlocal version $h\DT$, may be found by the Neumann-type iteration~\eqref{e:ml-maf-series}.
We start by explicitly constructing a Fourier integral operator $A$ that isolates $h\MDT$, given $c$. By Lemma~\ref{l:isolate-mdt} this FIO is a microlocal inverse for $I-\sigma^\star R\sigma^\star R$. Now, Neumann iteration also provides a (formal) microlocal inverse for this operator. The existence of $A$ can be used to show that Neumann iteration isolates $h\MDT$ as well, in a principal symbol sense. This leads to the question of injectivity for $I-\sigma^\star R\sigma^\star R$, explored in greater depth in Section~\ref{s:ml-uniqueness}.

\subsection{Constructive parametrix for $I-\sigma^\star R\sigma^\star R$}						\label{s:ml-construct}

In this section, we lay out conditions on $\Theta$, $c$, $h_0$ under which we can show the existence of an $h_\infty$ isolating $h\MDT$, and thereby $I-\sigma^\star R\sigma^\star R$. The motivation for this relatively straightforward task is that it enables the study the convergence behavior of the microlocal Neumann iteration in the following section.

We start by making a number of definitions; most of which are illustrated in Figure~\ref{f:ml-constructive-terms}.%
\footnote{Note that for simplicity Figure~\ref{f:ml-constructive-terms} is not generic; in light of the remarks in \sref{f:ml-adt}, the behavior of $d^*_{T^*\Theta'}$ is typically much more complicated.}

\begin{defn}\hfill
	\renewcommand\labelenumi{(\alph{enumi})}
	\renewcommand\labelenumii{\roman{enumii}.}
	\begin{enumerate}
		\item The forward and backward \emph{microlocal domains of influence} $\mathcal D^+\MDT$, $\mathcal D^-\MDT$ are defined by:
		\begin{nalign}
			\mathcal D^-\MDT &= \set{(t,\eta)\in[0,T]\times \To^*\RR^n}{d^*_{T^*\Theta'}(\eta)>t}\!,\\
			\mathcal D^+\MDT &= \set{(t,\eta)\in[T,2T]\times \To^*\RR^n}{d^*_{T^*\Theta'}(\eta)>2T-t}\!.
		\end{nalign}
		By propagation of singularities, every $\eta\in\WF(h\MDT)$ is connected to some $\eta'\in\WF(h_0)$ by a broken bicharacteristic inside $\mathcal D^-\MDT$.
		\item A \emph{returning} bicharacteristic $\gamma:(t_-,t_+)\to\To^*(\RR^n\setminus\Gamma)$ is one that leaves $\mathcal D^-\MDT$ before $t=T$. More precisely, $\gamma(t_0)\in\mathcal D^-\MDT$ and $\lim_{t\to t_1}\gamma(t)\notin\mathcal D^-\MDT$ for some $t_0,t_1\in(t_-,t_+]$, $t_0<t_1$.

\begin{figure}
	\centering
	\includegraphics{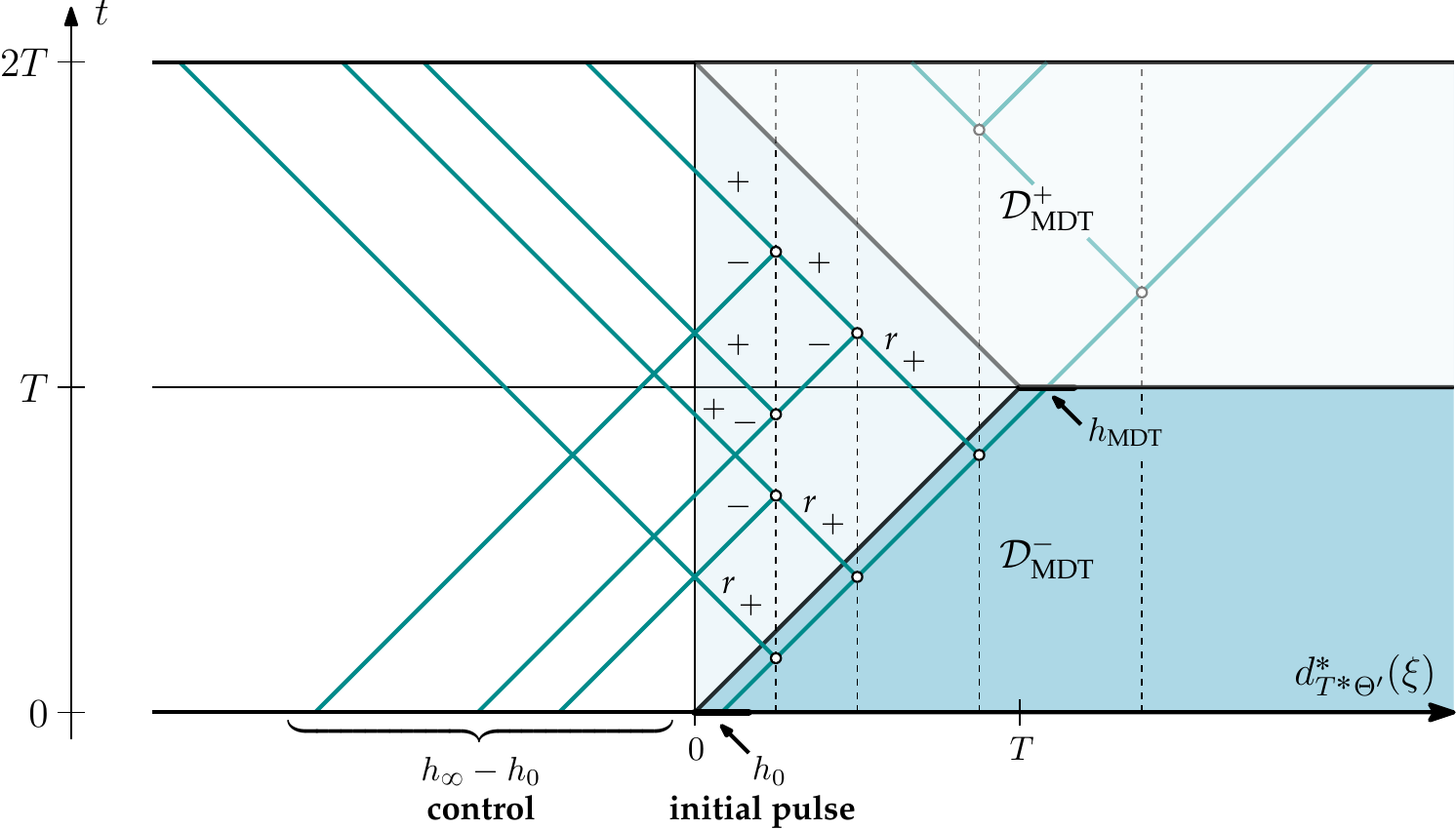}
	
	\caption{Terminology for constructing an inverse of $I-\sigma^\star R\sigma^\star R$. Here $\Theta$ is a halfspace $\{x_n>0\}$ and $c$ is piecewise constant with discontinuities along planes of constant $x_n$ (dashed lines). The wavefront set of the initial pulse $h_0$ is a single ray; to isolate $h\MDT$ three additional singularities are added to $h_\infty$ as indicated.
	Returning, $(+)$-, and $(-)$-escapable bicharacteristics are labeled \textit r, $+$, and~$-$ respectively.}
	\label{f:ml-constructive-terms}
\end{figure}

		\item Bicharacteristics $\gamma_1$, $\gamma_2$ are \emph{connected} if their union $\gamma_1\cup\gamma_2$ is a broken bicharacteristic. A bicharacteristic $\gamma_1$ terminating in an interface may have one (totally reflected), or two (reflected and transmitted) connecting bicharacteristics there. If it has two, there exists an \emph{opposite} bicharacteristic $\gamma_3$ sharing $\gamma_1$'s connecting bicharacteristics.
		\item A bicharacteristic $\gamma\colon(t_-,t_+)\to\To^*(\RR^n\setminus\Gamma)$ is \emph{$(\pm)$-escapable} if either:
		\begin{enumerate}
			\item it has \emph{escaped}: $\gamma$ is defined at $t=T\pm T$ and $\gamma(T\pm T)\notin T^*\Theta$,
		\end{enumerate}
		or recursively, after only finitely many recursions, either
		\begin{enumerate}
			\stepcounter{enumii}
			\item all of its connecting bicharacteristics at $t_\pm$ are $(\pm)$-escapable;
			\item one of its connecting bicharacteristics at $t_\pm$ is $(\pm)$-escapable, and the opposite bicharacteristic is $(\mp)$-escapable.
		\end{enumerate}
		In the final case, if the $(\pm)$-escapable connecting bicharacteristic is a reflection, we also require $c$ to be discontinuous at $\lim_{t\to t_\pm}\gamma(t)$ to ensure the reflection operator has nonzero principal symbol there.
	\end{enumerate}
\end{defn}

Roughly speaking, we may ensure a singularity traveling along a $(+)$-escapable bicharacteristic never creates a singularity in $\mathcal D^+\MDT$ by choosing $h_\infty$ appropriately. Similarly, we may \emph{produce} a singularity along a $(-)$-escapable bicharacteristic without introducing any extra singularities inside $\mathcal D^+\MDT$.

Now, if every returning bicharacteristic in $\WF(Fh_0)$ is $(+)$-escapable, we can find an $h_\infty$ isolating $h\MDT$ with an FIO construction, leading to a microlocal inverse of $I-\sigma^\star R\sigma^\star R$. Accordingly, let $\mathcal S\subset T^*\Theta'$ be the set of $\xi\notin\mathcal W$ such that every returning bicharacteristic belonging to a broken bicharacteristic through $\xi$ is $(+)$-escapable%
\footnote{Recall from~\sref{s:ml-maf} that $\mathcal W$ is the set of covectors for which the parametrix $\tilde R$ is valid.}%
. We then have the following result:

\begin{proposition}
	There is an FIO $A\colon \mathcal E'(\Theta')\oplus \mathcal E'(\Theta')\to\mathcal D'(\RR^n)\oplus\mathcal D'(\RR^n)$ of order 0 satisfying
	\begin{nalign}
		(I-\sigma^\star R\sigma^\star R) A &\eqml I
		\quad\text{on $\mathcal D'_{\mathcal S}$}.
	\end{nalign}
	Furthermore, $R_{2T}Ah_0\eqml R_T h\MDT$ for any $\WF(h_0)\subset\mathcal S$.
	
	\label{p:constructive-parametrix}
\end{proposition}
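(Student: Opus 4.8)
The strategy is to build $A$ as an explicit sum of graph FIOs, one for each singularity we must add to $h_\infty$ in order to cancel the returning bicharacteristics. Concretely, given $h_0$ with $\WF(h_0)\subset\mathcal S$, propagate forward: $R_Th_0$ has singularities lying on broken bicharacteristics emanating from $\WF(h_0)$. Those that stay in $\mathcal D^-\MDT$ up to time $T$ contribute to $h\MDT$; the returning ones must be neutralized. For each returning bicharacteristic $\gamma$ through $\WF(h_0)$, the $(+)$-escapability hypothesis gives us, by unwinding the recursive definition, a finite tree of bicharacteristics ending in ones that have ``escaped'' $T^*\Theta$ by time $2T$. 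The plan is to run the construction backwards along this tree: starting from a covector outside $T^*\Theta$ at time $2T$ (where $\sigma^\star=1$, so a singularity there is harmless), follow the reflection/transmission operators of the parametrix $\tilde R$ backward, using the nonvanishing-principal-symbol condition imposed in case (c.iii) to invert the relevant reflection coefficients, until we reach a covector in $\mathcal S\subset T^*\Theta'$ at $t=0$. This defines, for each such escape route, a zeroth-order graph FIO carrying that piece of $h_0$ to the corresponding correction term; $A$ is $I$ plus the sum of all these operators (finitely many, by the finite-recursion clause, and with disjoint enough canonical relations that the sum is a well-defined FIO of order $0$ microlocally over $\mathcal S$).

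\textbf{Verifying the two claims.} With $A$ so constructed, one checks $R_{2T}Ah_0\eqml R_Th\MDT$ on $\Theta$ directly: the added terms are designed precisely so that each returning singularity of $FAh_0$ is cancelled before it can re-enter $\mathcal D^+\MDT$, while the non-returning part is untouched, so the wave field of $Ah_0$ at $t=2T$ inside $\Theta$ agrees with that of $h\MDT$ propagated from time $T$. More carefully, one propagates $Ah_0$ to time $T$ and shows its restriction to $(T^*\Theta')_T$ equals that of $R_Th_0$ (the corrections live outside this set since their bicharacteristics have left $\mathcal D^-\MDT$), and its wavefront set is contained in $\clsr{(T^*\Theta'')_T}$ after a final microlocal cutoff — i.e. it satisfies~\eqref{e:ml-adt}. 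Also $Ah_0\eqml h_0$ on $\Theta$: the correction terms are supported microlocally along bicharacteristics that have left $\Theta'$, so they do not disturb $h_0$ there. Then Lemma~\ref{l:isolate-mdt} (with $\tilde R$ replacing $R$) immediately gives $(I-\sigma^\star R\sigma^\star R)A\eqml I$ on $\mathcal D'_{\mathcal S}$, since $R\eqml\tilde R$ on the relevant covectors by the defining property of the parametrix (here the exclusion of $\mathcal W$ in the definition of $\mathcal S$ is used).

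\textbf{Main obstacle.} The delicate point is the backward construction along the escape tree in case (c.iii), where a bicharacteristic's reflected branch is $(+)$-escapable but its transmitted (opposite) branch must be routed via a $(-)$-escapable path. Here one is not simply following a single canonical graph backward; one must balance amplitudes across the Snell's-law splitting so that \emph{both} outgoing branches are produced with the correct principal symbols from a single incoming singularity, which forces us to solve a small linear system involving the reflection and transmission coefficients of the parametrix at that interface — this is exactly why we demanded $c$ be discontinuous there (so the reflection symbol is invertible). Keeping track of the bookkeeping so that the total operator $A$ remains a bona fide FIO of order $0$ (no loss of Sobolev regularity, canonical relation a finite union of graphs, symbols smooth where they need to be) across the whole finite recursion is the technical heart of the argument; everything else is propagation of singularities plus the already-established Lemma~\ref{l:isolate-mdt}.
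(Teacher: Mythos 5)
Your plan coincides with the paper's proof: both construct $A=I+(\text{correction FIO})$ by recursively unwinding the $(+)$-escapability tree (the paper encodes this in explicit operators $\Xi^{\text{i/o}}_\pm$), invert the reflection operator $M_{\text{R}}^{-1}$ at case~(c.iii) branch points using the hypothesis that $c$ be discontinuous there, patch the local operators $A_\eta=I+\Xi^{\text{o}}_+\sum_{s}\mathcal P_s$ with a microlocal partition of unity, and then invoke Lemma~\ref{l:isolate-mdt}. One small imprecision worth flagging: in case~(c.iii) the ``opposite'' bicharacteristic is the other \emph{incoming} ray sharing the same reflected and transmitted connecting branches, not the transmitted outgoing branch; one adds a fresh singularity along this $(-)$-escapable incoming ray and solves a $2\times2$ system with the original incoming amplitude, which is the ``balancing'' you describe.
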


Note that, because any broken ray intersects only finitely many interfaces in the time interval $t\in[0,2T]$, the condition of being $(\pm)$-escapable is open, and in particular $\mathcal S$ is open.

\subsection{Convergence of microlocal Neumann iteration}					\label{s:ml-convergence}

With the microlocal inverse $A$ constructed for $I-\sigma^\star R\sigma^\star R$ (knowing $c$), we may now examine the behavior of Neumann iteration (which does not require knowing $c$). Recalling~\eqref{e:ml-maf-series}, define the Neumann iteration operators
\begin{equation}
	N_k = \sum_{i=0}^k (\sigma^*\tilde R)^{2i}.
	\label{e:ml-neumann-partial-sum-ops}
\end{equation}
In this section we present our main microlocal theorem: the operators $N_k$ isolate $h\MDT$ in a particular leading order sense as $k\to\infty$. Throughout, as in~\eqref{e:ml-neumann-partial-sum-ops} we substitute for $R$ the parametrix $\tilde R$ having cutoffs near glancing rays.

Since $\lim N_k$ has no microlocal interpretation in general we will instead consider the convergence of the partial sum operators' principal symbols. Technically, of course, these symbols belong to separate spaces, since each $N_k$ is associated with a different Lagrangian in general. Hence, we first define a suitable symbol space containing the principal symbols of $A$ and $N_k$, and any reasonable FIO parametrix of~\eqref{e:ml-maf}. We then introduce a natural $\ell^2$ norm, which acts as a \emph{microlocal energy norm}, on restrictions of the symbol space, and state the convergence theorem.

To describe the principal symbols of $A$ and $N_k$, we split them into finite sums of \PsiDO{}s composed with fixed unitary FIO, then record the \PsiDO{}s' principal symbols; this is a kind of polar decomposition. As is well-known (see appendix~\ref{s:parametrix-construction}), after a standard microlocal splitting of the wave equation into positive and negative wave speeds, $\tilde R$ is a sum of graph FIO $R_s$, one for each finite sequence $s\in\{\text R,\text T\}^j$, $j\geq 0$ of reflections and transmissions. For each $s$, let $C_s$ be the canonical transformation of $R_s$; form the set of all possible compositions
\begin{equation}
\sC=\set{C_{s^{(1)}}\circ\dotsb\circ C_{s^{(m)}}\!}{m\geq 0}.
\end{equation}
and enumerate this resulting set with a single index $i$:
\begin{equation}
\sC=\set{\sC_i}{i\in\mathcal I}.
\end{equation}
Hence, each composition of reflections, transmissions, and time-reversals leads to a canonical transformation $\sC_i$; in general, a single $\sC_i$ might be represented by (infinitely many) different compositions $C_{s^{(1)}}\circ\dotsb\circ C_{s^{(m)}}$. We term an FIO \emph{$\sC$-compatible} if it is associated with a finite union of $\sC_i$.

Next, fix a set of elliptic FIO $(J_i)_{i\in\mathcal I}$ associated with the $\sC_i$ that are microlocally unitary, that is, $J_i^*J_i\eqml I$.
Any $\sC$-compatible FIO $\mathcal Z$ may now be written in the form $\mathcal Z = \sum_{i\in\mathcal I} P_iJ_i$ for appropriate \PsiDO{}s $P_i$. Define the \emph{principal symbol of $\mathcal Z$ with respect to $(J_i)_{i\in\mathcal I}$} to be the tuple of principal symbols of the $P_i$, restricted to the cosphere bundle:
\begin{equation}
	\sigma_0=\sigma_0(\mathcal Z) = \big(\sigma_0(P_i)\big)_{i\in\mathcal I} \in C^\infty\big(S^*(\bRR^n\setminus\bGamma)\times \mathcal I\big),
\end{equation}
The boldface $\bRR^n\setminus\bGamma$ denotes a doubled space containing two copies of $\RR^n\setminus\Gamma$; due to the microlocal splitting this is a natural space for Cauchy data. For convenience, we consider the tuple $\sigma_0$ as a function on a single domain having one copy of $S^*(\bRR^n\setminus\bGamma)$ for each $i\in\mathcal I$. Note that a full symbol for $\mathcal Z$ (not needed here) could be defined analogously.

Now, for $\eta\in S^*(\bRR^n\setminus\bGamma)$ define
\begin{equation}
	\mathcal G_{\eta} = \set{(\sC_i(\eta),i)}{i\in\mathcal I,\,\eta\in\mathcal D(\sC_i)} \subset S^*(\bRR^n\setminus\bGamma)\times \mathcal I,	
\end{equation}
where $\mathcal D(\sC_i)$ is the domain of $\sC_i$. That is, $\mathcal G_{\eta}$ contains all covectors reachable from $\eta$, together with a knowledge of the paths $i$ taken for each.

Consider the restriction of a principal symbol $\sigma_0(\mathcal Z)$ to the space $\mathcal G_\eta$. Here, $\sigma_0(\mathcal Z)$ may be viewed both as an element of $\mathcal G_\eta$ and the unique linear operator on $\mathcal G_\eta$ defined by left-composition:
\begin{equation}
	\sigma_0(\mathcal Z)\colon \sigma_0(\mathcal Z')\big|_{\mathcal G_\eta} \mapsto \sigma_0(\mathcal Z\mathcal Z')\big|_{\mathcal G_\eta},
\end{equation}
for $\sC$-compatible FIOs $\mathcal Z'$. The composition $\mathcal Z\mathcal Z'$ is well-defined as an FIO since all operators involved are sums of graph FIO.

The key idea is that the $\ell^2$ norm on $\mathcal G_\eta$ provides a natural microlocal energy operator norm for $\mathcal Z$. In particular (see Lemma~\ref{l:ml-energy-conservation} in~\sref{s:ml-proofs}), just as $\norm R=1$ w.r.t.\ the exact operator norm, so composition with $\tilde r$ has operator norm 1 on the $\ell^2(\mathcal G_\eta)$ principal symbol space, in the absence of glancing ray cutoffs.
Combining this norm with existence of an $\ell^2$-bounded microlocal inverse of $I-\sigma^\star\tilde R\sigma^\star\tilde R$, we can prove principal symbol convergence for Neumann iteration. In the limit, furthermore, the wave field produced by Neumann iteration at $t=2T$ inside $\Theta'$ agrees with that produced by the given microlocal inverse, modulo $C^\infty$.

\begin{theorem}
	Suppose $\tilde{\mathcal S} \subset\To^*(\bRR^n\setminus\bGamma)$ is a conic set on which $I-\sigma^\star \tilde R\sigma^\star \tilde R$ has a $\sC$-compatible right parametrix $\tilde A$ on $\tilde{\mathcal S}$; that is, $(I-\sigma^\star \tilde R\sigma^\star \tilde R)\tilde A\eqml I$ on $\tilde{\mathcal S}$. 
	Assume that $\sigma_0(\tilde A)$ restricts to a bounded operator on $\ell^2(\mathcal G_\eta)$ for each $\eta\in\tilde{\mathcal S}\cap S^*(\bRR^n\setminus\bGamma)$.
	
	Then, for every $\eta\in\tilde{\mathcal S}\cap S^*(\bRR^n\setminus\bGamma)$, the Neumann series principal symbols $\sigma_0(N_k)$ converge to some $n_\infty\in\ell^2(\mathcal G_{\eta})$. Furthermore, $\sigma_0(\tilde RN_k)\to \sigma_0(\tilde R\tilde A)$ in $\ell^2(\mathcal G_\eta\cap S^*\bTheta')$.
		
	\label{t:ml-convergence}
\end{theorem}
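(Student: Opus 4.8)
Proof strategy. The plan is to pass quickly to the principal-symbol level, reducing the statement to a Hilbert-space question about iterating a contraction on $\ell^2(\mathcal G_\eta)$, and then to exploit the microlocal self-adjointness of $\tilde R$ together with a symmetrization that sidesteps the non-idempotency of the cutoff $\sigma^\star$.

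First I would fix $\eta\in\tilde{\mathcal S}\cap S^*(\bRR^n\setminus\bGamma)$ and write $H=\ell^2(\mathcal G_\eta)$, $S=\sigma_0(\sigma^\star)$, $U=\sigma_0(\tilde R)$. Since $0\le\sigma^\star\le 1$, the operator $S$ is diagonal and self-adjoint with $0\le S\le I$; and since $\tilde R$ is microlocally self-adjoint (a parametrix for the self-adjoint $R=\nu R_{2T}$) while Lemma~\ref{l:ml-energy-conservation} bounds its principal-symbol action by $1$, the operator $U$ is a self-adjoint contraction on $H$ (a norm-one involution absent glancing, hence unitary and self-adjoint there; the glancing cutoffs only shrink norms). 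Then $B:=\sigma_0\big((\sigma^\star\tilde R)^2\big)=SUSU$ satisfies $\|B\|\le 1$. Restricting the symbol identity coming from $(I-\sigma^\star\tilde R\sigma^\star\tilde R)\tilde A\eqml I$ on $\tilde{\mathcal S}$ to $\mathcal G_\eta$ (which lies in $\tilde{\mathcal S}$, the conic set being taken invariant under the canonical relations at play so that the symbol calculus closes) yields $(I-B)\mathcal A=I$ on $H$, with $\mathcal A=\sigma_0(\tilde A)$ bounded by hypothesis. Putting $a:=\mathcal Ae$ and $e:=\sigma_0(I)|_{\mathcal G_\eta}$ gives $(I-B)a=e$, so by telescoping $\sigma_0(N_k)|_{\mathcal G_\eta}=\sum_{i=0}^k B^ie=(I-B^{k+1})a=a-B^{k+1}a$. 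Both conclusions now reduce to the behaviour of $B^{k+1}a$.

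The crux is to prove $B^n$ converges strongly, even though $B$ is neither self-adjoint nor a square of a self-adjoint operator — the obstruction being that $\sigma^\star$, unlike the exact theory's projection $\pi^\star$, is not idempotent, so $\sigma^\star\tilde R\sigma^\star\tilde R\neq(\sigma^\star\tilde R\sigma^\star)^2$ and the argument of Proposition~\ref{p:only-neumann}/Lemma~\ref{l:only-neumann} does not apply verbatim. I would introduce the self-adjoint contraction $D:=S^{1/2}US^{1/2}$ and verify the algebraic intertwining $B^nS^{1/2}=S^{1/2}D^{2n}$ for all $n$, which needs no invertibility of $S$. Since $D^2\ge 0$ and $\|D^2\|\le 1$, the spectral theorem and dominated convergence ($t^n\to\mathbf 1_{\{1\}}$ on $[0,1]$) give $D^{2n}\to\Pi$ strongly, $\Pi$ the orthogonal projection onto $\ker(I-D^2)$. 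Hence $B^n$ converges strongly on $\overline{\operatorname{ran}(S)}$ — first on the dense subspace $\operatorname{ran}(S^{1/2})$ via the intertwining, then everywhere by the uniform bound $\|B^n\|\le 1$ — and since $\operatorname{ran}(B)\subseteq\operatorname{ran}(S)\subseteq\operatorname{ran}(S^{1/2})$, writing $B^nx=B^{n-1}(Bx)$ shows $B^nx$ converges for every $x\in H$. In particular $B^{k+1}a\to v_\infty$, and as $B$ is continuous, $v_\infty\in K:=\ker(I-B)$. This gives the first assertion $\sigma_0(N_k)\to n_\infty:=a-v_\infty$, the microlocal analogue of Proposition~\ref{p:only-neumann} (the hypothesised bounded parametrix playing the role of $h_0\in\mathcal Q$).

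For the interior convergence I would compute $\sigma_0(\tilde RN_k)|_{\mathcal G_\eta}=U\sigma_0(N_k)|_{\mathcal G_\eta}=Ua-UB^{k+1}a\to Ua-Uv_\infty$ in $\ell^2(\mathcal G_\eta)$, while $\sigma_0(\tilde R\tilde A)|_{\mathcal G_\eta}=U\mathcal Ae=Ua$; so it remains to show $Uv_\infty$ vanishes on $\mathcal G_\eta\cap S^*\bTheta'$. This follows from a characterization of $K$: if $Bv=v$ then $\|v\|=\|SUSUv\|\le\|USUv\|\le\|SUv\|\le\|Uv\|\le\|v\|$ forces equality throughout, and $\|SUv\|=\|Uv\|$ forces $Uv$ to be supported where $\sigma^\star=1$, i.e.\ on covectors based outside $\Theta$ — the microlocal shadow of $\mathbf G=\ker(I-\pi^\star R\pi^\star R)$, namely a $v$ whose wave field stays outside $\supp\sigma$ at $t=0$ and $t=2T$. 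As $\Theta'\subseteq\Theta$, $Uv$ then vanishes on $S^*\bTheta'$; applied to $v=v_\infty$ this closes the argument. The step I expect to fight hardest with is the symmetrization of the previous paragraph: establishing $B^nS^{1/2}=S^{1/2}D^{2n}$ and transporting the strong limit of $D^{2n}$ through the non-invertible $S^{1/2}$ onto all of $H$ is precisely where the failure of the microlocal cutoffs to be projections must be absorbed; once that is in place, the remaining bookkeeping — multiplicativity and $*$-compatibility of $\sigma_0$ on $\sC$-compatible FIOs, and that everything restricts cleanly to $\mathcal G_\eta$ — is routine in the framework set up above.
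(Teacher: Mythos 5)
Your proof is correct and takes essentially the same route as the paper: both symmetrize to the self-adjoint contraction $\sqrt{\sigma^\star}\,\tilde r\,\sqrt{\sigma^\star}$, invoke the spectral theorem for a positive contraction to get strong convergence of the iterates, and characterize $\ker(I-\sigma^\star\tilde r\sigma^\star\tilde r)$ via a chain of norm equalities to show the residual discrepancy is microsupported where $\sigma^\star=1$ and hence vanishes on $S^*\bTheta'$. Your intertwining $B^nS^{1/2}=S^{1/2}D^{2n}$ with $D=S^{1/2}US^{1/2}$ is a tidy algebraic repackaging of the paper's substitution $w=(u-i)/\sqrt{\sigma^\star}$ followed by the appeal to Lemma~\ref{l:only-neumann}.
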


Of course, we have in mind for $\tilde A$ the concrete parametrix $A$ of Proposition~\ref{p:constructive-parametrix}. This parametrix is $\sC$-compatible~(cf.~\sref{s:ml-construct-proof}); it also has finitely many graph FIO components, so it is a bounded operator on $\ell^2(\mathcal G_\eta)$. Taking $\tilde A=A$ we have the following direct corollary of Proposition~\ref{p:constructive-parametrix} and Theorem~\ref{t:ml-convergence}:

\begin{corollary}
	For every $\eta\in\mathcal S\cap S^*(\bRR^n\setminus\bGamma)$, the Neumann series principal symbols $\sigma_0(N_k)$ converge in $\ell^2(\mathcal G_{\eta})$. Furthermore, $\sigma_0(RN_k)\to \sigma_0(RA)$ in $\ell^2(\mathcal G_\eta\cap S^*\bTheta')$.
		
	\label{c:ml-convergence-mdt}
\end{corollary}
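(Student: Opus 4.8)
The plan is to obtain the corollary as a purely formal consequence of Theorem~\ref{t:ml-convergence} by taking $\tilde A = A$, the explicit parametrix furnished by Proposition~\ref{p:constructive-parametrix}, and taking $\tilde{\mathcal S}$ to be the lift of $\mathcal S$ to the doubled cotangent bundle $\To^*(\bRR^n\setminus\bGamma)$ coming from the standard microlocal splitting into positive and negative wave speeds (recall that Cauchy data already live on this doubled space, since $h_0\in\mathcal E'(\Theta')\oplus\mathcal E'(\Theta')$, so $\mathcal S$ has a canonical such lift). To invoke Theorem~\ref{t:ml-convergence} I must check its two standing hypotheses for this choice: that $A$ is $\sC$-compatible, and that $\sigma_0(A)$ restricts to a bounded operator on $\ell^2(\mathcal G_\eta)$ for each $\eta$ in $\mathcal S\cap S^*(\bRR^n\setminus\bGamma)$.

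First I would record $\sC$-compatibility of $A$. This is built into the construction of $A$ (cf.~\sref{s:ml-construct-proof}): $A$ is assembled from compositions of the graph FIO $R_s$ for finite reflection/transmission words $s$ together with time reversals, so its canonical relation is a finite union of the $\sC_i$. Second, by construction $A$ has only finitely many nonzero graph-FIO components; writing $A\eqml\sum_{i\in\mathcal I_0}P_iJ_i$ with $\mathcal I_0\subset\mathcal I$ finite and the $P_i$ \PsiDO{}s, left-composition by $\sigma_0(A)$ on $\ell^2(\mathcal G_\eta)$ is a finite sum of operators, each of which performs a coordinate shift by one of the finitely many $\sC_i$, $i\in\mathcal I_0$ (a partial isometry of $\ell^2(\mathcal G_\eta)$), followed by multiplication by the bounded symbol $\sigma_0(P_i)$. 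Hence its operator norm is at most $\sum_{i\in\mathcal I_0}\sup\dabs{\sigma_0(P_i)}<\infty$, and $\sigma_0(A)$ is $\ell^2(\mathcal G_\eta)$-bounded. So both hypotheses hold, with $\tilde A=A$ and $\tilde{\mathcal S}$ the lift of $\mathcal S$.

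Next I would reconcile the exact propagator $R$ with the parametrix $\tilde R$. Proposition~\ref{p:constructive-parametrix} gives $(I-\sigma^\star R\sigma^\star R)A\eqml I$ on $\mathcal D'_{\mathcal S}$; since by definition $\mathcal S$ avoids the glancing set $\mathcal W$, and propagation of singularities keeps the relevant flow-out away from $\mathcal W$, we have $R\eqml\tilde R$ on every wavefront set occurring, so $(I-\sigma^\star\tilde R\sigma^\star\tilde R)A\eqml I$ on $\tilde{\mathcal S}$ as well. Theorem~\ref{t:ml-convergence} then applies verbatim: for each $\eta\in\mathcal S\cap S^*(\bRR^n\setminus\bGamma)$ the symbols $\sigma_0(N_k)$ converge in $\ell^2(\mathcal G_\eta)$, and $\sigma_0(\tilde RN_k)\to\sigma_0(\tilde RA)$ in $\ell^2(\mathcal G_\eta\cap S^*\bTheta')$. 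Finally, on $S^*\bTheta'$ and for covectors in $\mathcal G_\eta$ with $\eta\in\mathcal S$ the glancing cutoffs built into $\tilde R$ act as the microlocal identity, so $\sigma_0(\tilde RN_k)=\sigma_0(RN_k)$ and $\sigma_0(\tilde RA)=\sigma_0(RA)$ there; substituting yields $\sigma_0(RN_k)\to\sigma_0(RA)$, which is the second assertion of the corollary.

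I do not expect a genuinely hard step here — the paper itself calls this a direct corollary. The only point needing care is the bookkeeping in the previous paragraph: matching the single-space, $R$-based statement of Proposition~\ref{p:constructive-parametrix} (with $\mathcal S\subset T^*\Theta'$) to the doubled-space, $\tilde R$-based hypotheses and conclusion of Theorem~\ref{t:ml-convergence}, and verifying that the passage between $R$ and $\tilde R$ is free on the region of interest because $\mathcal S$ and its broken-bicharacteristic flow-out stay clear of $\mathcal W$.
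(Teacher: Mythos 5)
Your proposal is correct and follows exactly the paper's own route: the text just before the corollary remarks that $A$ is $\sC$-compatible, has finitely many graph-FIO components and hence an $\ell^2(\mathcal G_\eta)$-bounded principal symbol, and then states the corollary as a direct consequence of Proposition~\ref{p:constructive-parametrix} and Theorem~\ref{t:ml-convergence} with $\tilde A=A$. You have merely spelled out the bookkeeping the paper leaves implicit, notably the lift of $\mathcal S$ to the doubled space and the identification of $R$ with $\tilde R$ away from $\mathcal W$, both of which are correct and needed to parse the statement literally.
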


According to Proposition~\ref{p:constructive-parametrix}, we have $R_{2T}Ah_0\eqml R_T h\MDT$ on $T^*\bTheta'$. Hence, the corollary implies that to leading order, the same is true of the $N_k$ as $k\to\infty$; they also isolate $h\MDT$.

Note that Theorem~\ref{t:ml-convergence} does not claim that the principal symbol limit $n_\infty$ is itself the principal symbol of some FIO. In particular, the support of $n_\infty$ on some fiber $\mathcal G_\eta$ may be infinite, that is, $n_\infty$ maps $\eta$ to infinitely many singularities. In this case it is not obvious that $n_\infty$ corresponds to any FIO. Conversely, if $n_\infty$ is smooth and its restriction to every $\mathcal G_\eta$ has finite support, an FIO $N_\infty$ with principal symbol $n_\infty$ is easily constructed.

\subsection{Microlocal uniqueness}						\label{s:ml-uniqueness}

The previous two sections treated the solution of $(I-\sigma^\star R\sigma^\star R)h_\infty\eqml h_0$, both constructively and iteratively. In this section we turn to the question of uniqueness; i.e.~the solutions of $g\eqml \sigma^\star R\sigma^\star Rg$. As we will see, the microlocal scattering control equation displays two distinct kinds of nonuniqueness: a normal type, due to diving rays and total reflections, and a pathological type, involving an infinite-energy sequence of reinforcing singularities.

The first type is analogous to the nonuniqueness seen in the exact setting. In the exact case, the kernel $\mathbf G$ of $I-\pi^\star R\pi^\star R$ consists only of initial data whose wave fields are supported outside $\Theta$, due to unique continuation. In other words, no waves can enter $\Theta$, completely reflect, and leave in finite time $2T$. Microlocally, however, there is a much richer space of completely reflecting wave fields, including totally reflecting and diving rays. Note that these rays do not affect $\trestr{h_\infty}_{\Theta'}$ and in particular do not interfere with the wave field of $h\MDT$, up to smoothing.

The second type of nonuniqueness is unique to the microlocal setting. In this case, the wave field produced by initial data $g$ does include singularities inside $\bTheta'$ at time $2T$, which $\sigma^\star$ cuts off. The (microlocal) energy lost in this cutoff must be replenished by a second singularity in the initial data, which in turn must be replenished a third, and so on, necessitating an infinite chain of singularities. Since $Rg$ is not smooth in $\bTheta'$, the converse of Lemma~\ref{l:isolate-mdt} fails.

In the following examples, we illustrate these two nonuniqueness types at length.

\begin{figure}
\centering

\mbox{}
\subfloat[An element in the microlocal kernel of $(I-\sigma^\star R\sigma^\star R)$]{%
	\qquad\qquad\includegraphics[page=1]{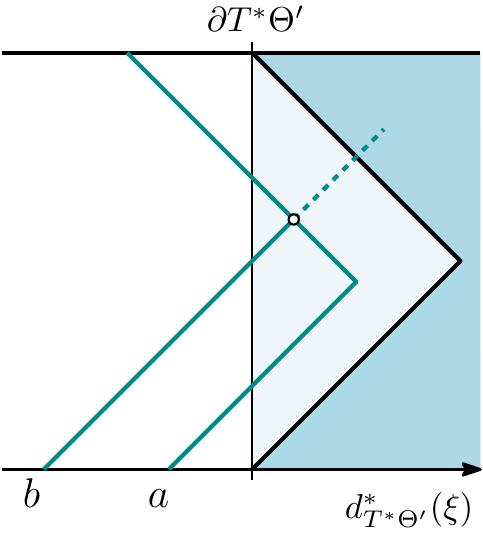}\qquad\qquad%
	\label{f:regular-ml-nonuniqueness-kernel}%
}
\hfill
\quad\includegraphics[page=3]{Figures/NewNormalNonuniqueness.pdf}%
\hfill
\subfloat[Nonuniqueness for microlocal scattering control]{%
	\qquad\includegraphics[page=2]{Figures/NewNormalNonuniqueness.pdf}\qquad%
	\label{f:regular-ml-nonuniqueness-h0}%
}
\mbox{}

\caption{Regular nonuniqueness for microlocal scattering control; interfaces are marked with discs. (a) An appropriate combination of singularities at $a$ and $b$ is smooth on the dashed bicharacteristic and reflects from $\Theta$. (b) A singularity from $h_0$ can be cancelled at either $a$ or $b$.}

\label{f:regular-ml-nonuniqueness}
\end{figure}

\begin{example}
Figure~\subref*{f:regular-ml-nonuniqueness-kernel} presents an element of the microlocal kernel of $(I-\sigma^\star R\sigma^\star R)$, with a diving or totally reflecting ray and one interface. If $g$ has singularities at $a$ and $b$ satisfying an appropriate pseudodifferential relation, its wave field will be smooth along the dashed ray. Thus the cutoffs $\sigma^\star$ have no effect, and $\sigma^\star R\sigma^\star R g\eqml RRg=g$, implying $(I-\sigma^\star R\sigma^\star R)g\eqml 0$.

Figure~\subref*{f:regular-ml-nonuniqueness-h0} illustrates how this lack of injectivity leads to multiple solutions $h_\infty$. Here, a stray ray from the direct transmission can be cancelled by an appropriate singularity at either $a$ or $b$, or a linear combination of them. The proof of Theorem~\ref{t:ml-convergence} shows that Neumann iteration converges in principal symbol to a solution operator having ``least microlocal energy'' in the sense of a weighted $\ell^2$ norm on its principal symbol.

\label{x:ml-nonuniqueness-1}
\end{example}

\begin{example}

\begin{figure}
\centering

\includegraphics[page=1]{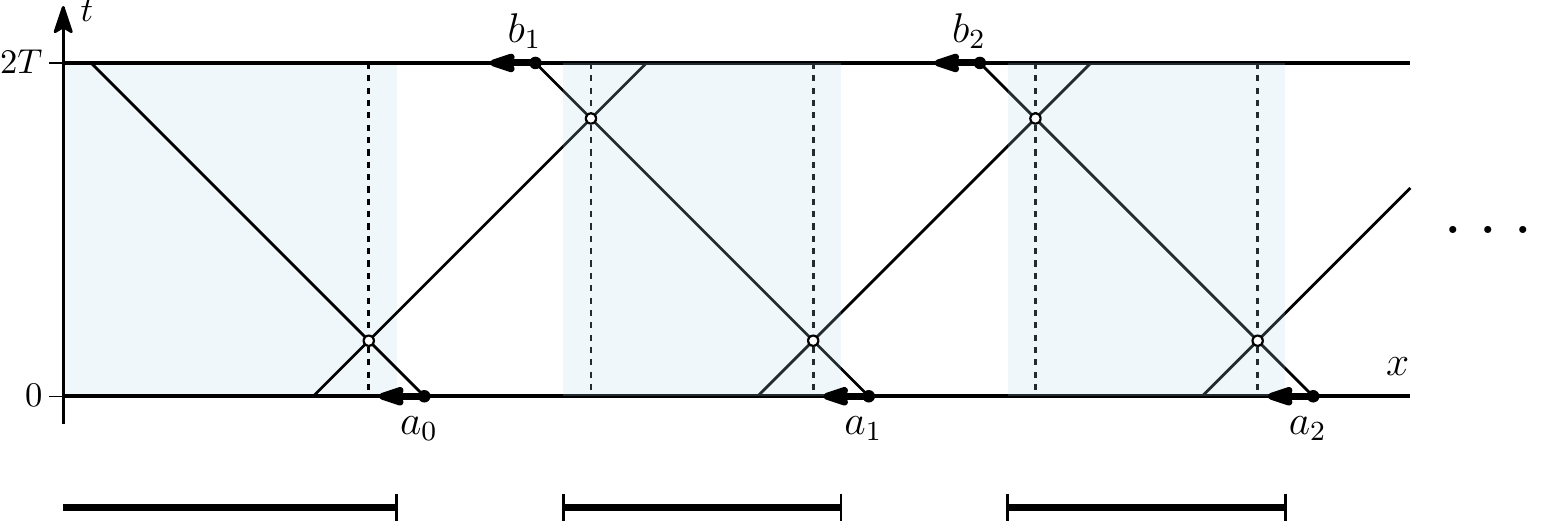}

\caption[One-dimensional example of the second type of uniqueness]{One-dimensional example of pathological nonuniqueness. $\Theta$ is a union of infinitely many intervals; dotted lines are interfaces. The pattern continues indefinitely as $x\to +\infty$.}

\label{f:pathological-ml-nonuniqueness-1D}
\end{figure}

\begin{figure}
\centering

\includegraphics{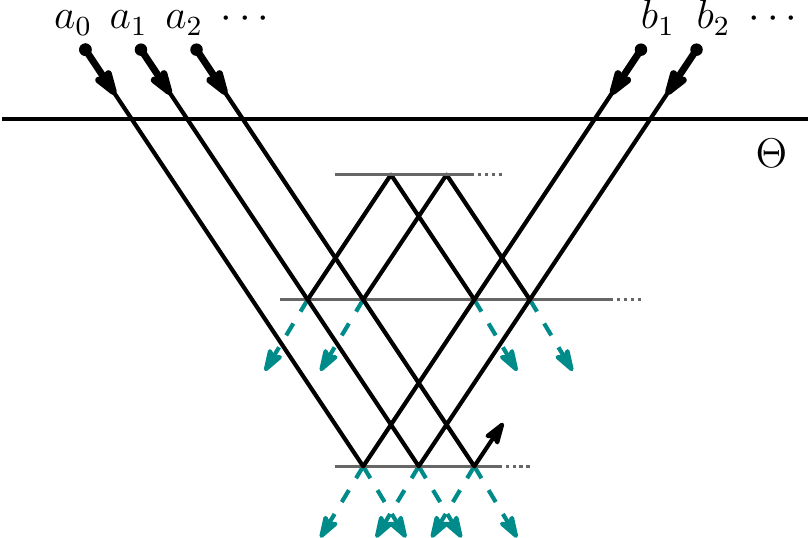}

\caption{Two-dimensional version of Figure~\ref{f:pathological-ml-nonuniqueness-1D}. Thin lines represent interfaces; dashed rays never reach the surface. Total internal reflection occurs at the upper interface.}

\label{f:pathological-ml-nonuniqueness-2D}
\end{figure}

Figure~\ref{f:pathological-ml-nonuniqueness-1D} shows a one-dimensional setup exhibiting the second type of nonuniqueness. While this example is contrived, Figure~\ref{f:pathological-ml-nonuniqueness-2D} shows how an equivalent and more realistic higher-dimensional version may be constructed. (Both examples involve non-compact domains, and we conjecture noncompactness is required for this type of nonuniqueness.)

Here $\Theta$ consists of an infinite series of disconnected open intervals $(-\infty,w_0)\cup(v_1,w_1)\cup(v_2,w_2)\cup\dotsb$. On each finite interval $c$ has two jump discontinuities; assume $\Theta'$ is sufficiently close to $\Theta$ to contain these singularities.
Two sequences of unit covectors $\{a_i\}_{i=0}^\infty,\,\{b_i\}_{i=1}^\infty\subset S^*\bTheta^\star\setminus\mathcal W$ are chosen so that the canonical relation of $\sigma^\star\tilde R$ sends $a_i$ to $\{b_i,b_{i+1}\}$ and $b_i$ to $\{a_{i-1},a_i\}$. 

We now construct a $g$ in the microlocal kernel of $I-\sigma^\star R\sigma^\star R$ with an infinite sequence of singularities at $a_0,a_1,a_2,\dotsc$. First, note that the canonical relation of $\sigma^\star \tilde R\sigma^\star \tilde R$ sends $a_i$ $(i>1)$ to $\{a_{i-1},a_i,a_{i+1}\}$. Suppose now that we choose some initial data $g$ with a singularity at $a_0$. After applying $\sigma^\star R\sigma^\star R$, some portion of this singularity's amplitude will be lost due to the $\sigma^\star$ cutoffs. We may, however, restore the lost amplitude by adding an appropriate singularity to $g$ at $a_1$. In turn, some of this new singularity's amplitude will be lost under $\sigma^\star R\sigma^\star R$, which we make up for with an appropriate singularity at $a_2$, and so on.

Rigorously, decompose $\sigma^\star \tilde R\sigma^\star\tilde R$ near each $a_i$ as the sum of three graph FIO $A_{-1}$, $A_0$, $A_1$ whose canonical graphs map $a_i$ to $a_{i-1}$, $a_i$, and $a_{i+1}$ respectively. Modify $A_0$, say, by a smooth operator so that $\sigma^\star R\sigma^\star R=A_{-1}+A_0+A_1$ exactly. It can be shown (cf.~\eqref{e:geometric-ps-refl-trans}) that the $A_k$ are elliptic. 

Now, choosing any $g_0\in L^2(\Theta^\star)$ with $\WF(g_0)=\RR^+ a_0$, we look for $g_i$, $i=1,2,\dotsc$ with wavefront sets at $\RR^+a_i$ such that the sum $g=\sum g_i$ satisfies $(I-\sigma^\star R\sigma^\star R)g\eqml 0$. This leads to the infinite matrix equation
\begin{equation}
	\setlength\arraycolsep{3pt}
	\left(
	I -
	\begin{bmatrix*}[l]
		A_0				& A_{-1}	&		&  \\
		A_1				& A_0	& A_{-1}	& \phantom\ddots\\
		\phantom{A_{-1}}	& A_1	& A_0	& \ddots\\
						&		& \ddots	& \ddots
	\end{bmatrix*}
	\right)
	\begin{bmatrix}
		g_0				\\
		g_1 \vphantom\vdots \\
		g_2 \vphantom\vdots \\
		\vdots
	\end{bmatrix}
	\eqml
	0.
	\label{e:path-eg-matrix-eqn}
\end{equation}

By ellipticity,~\eqref{e:path-eg-matrix-eqn} has a solution, namely $g_{i+1} \eqml (A_{-1})^{-1} \big((I-A_0)g_i+A_1g_{i-1}\big)$. To construct an associated $g$, we use the fact that the $\{a_i\}$ are discrete in $S^*(\bTheta^\star)$ (which implies $\Theta$ is unbounded). 

Each $g_i$ is locally $L^2$, so after multiplying by a smooth cutoff near the base point of $a_i$, we may assume $g_i\in L^2$. Applying radial cutoffs in the Fourier domain, we may assume that $\norm{g_i}_{L^2}\leq 2^{-i}$, so $g=\sum g_i$ converges in $L^2$. Defining $g_{-1}=0$, consider
\begin{equation}
	(I - \sigma^\star R\sigma^\star R)g = \sum_{i=0}^\infty - A_{1}g_{i-1} + (I-A_0)g_i - A_{-1}g_{i+1}.
	\label{e:error-path-nonuniqueness}
\end{equation}
Each summand is smooth by construction, and compactly supported near the base point of $a_i$. Because the $\{a_i\}$ are discrete, we can ensure only finitely many summands of~\eqref{e:error-path-nonuniqueness} are nonzero at any given point. Hence the entire sum is smooth, showing $g$ is in the microlocal kernel of $I-\sigma^\star R\sigma^\star R$. As expected, $Rg$ is not smooth in $\Theta'$; it is not hard to see it must be singular at every $b_i$. Hence, solving $(I-\sigma^\star R\sigma^\star R)h_\infty\eqml h_0$ is not sufficient for isolating $h\MDT$.

\label{x:ml-nonuniqueness-2}
\end{example}

\paragraph{Uniqueness and Isolating $h\MDT$}

We now close the circle, and return to the question of whether solving $(I-\sigma^\star R\sigma^\star R)h_\infty\eqml h_0$ is equivalent to isolating $h\MDT$. Of our two types of nonuniqueness, only the second interferes with isolating $h\MDT$. We may rule it out, to leading order, by assuming the same kind of microlocal energy boundedness seen earlier in Theorem~\ref{t:ml-convergence}: namely, $\ell^2$ boundedness of the parametrix's principal symbol. Assuming this condition, we reach a partial converse of Lemma~\ref{l:isolate-mdt}: a solution of the microlocal scattering control equation isolates $h\MDT$ to leading order as long as this is possible. We frame our proposition as a uniqueness result.

\begin{proposition}
	\nobelowdisplayskip
	Suppose $B_1,B_2$ are $\sC$-compatible microlocal right inverses for $I-\sigma^\star \tilde R\sigma^\star \tilde R$ on a conic subset $\tilde{\mathcal S}\subset\To^*(\bRR^n\setminus\bGamma)$. If their principal symbols restrict to elements of $\ell^2(\mathcal G_\eta)$ for all $\eta\in\tilde{\mathcal S}$,
	\begin{align}
		\trestr{\tilde RB_1h_0}_{\bTheta'}&\eqml\trestr{\tilde RB_2 h_0}_{\bTheta'}
		\bmod{H^{s+1}(\bRR^n\setminus\bGamma)}
		& \text{for all } h_0\in H^s(\bRR^n\setminus\bGamma)\cap\mathcal D'_{\tilde{\mathcal S}}.
	\end{align}
	\label{p:ml-uniqueness}
\end{proposition}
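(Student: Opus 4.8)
The plan is to reduce the statement to a convergent Neumann-type expansion on the level of principal symbols, exploiting the $\ell^2(\mathcal G_\eta)$ boundedness hypothesis in exactly the way the proof of Theorem~\ref{t:ml-convergence} does. Write $P=I-\sigma^\star\tilde R\sigma^\star\tilde R$, so that $PB_1\eqml PB_2\eqml I$ on $\tilde{\mathcal S}$, hence $P(B_1-B_2)\eqml 0$ there: the operator $D=B_1-B_2$ maps $\mathcal D'_{\tilde{\mathcal S}}$ into the microlocal kernel of $P$. The content of the proposition is that such a $D$ produces nothing new in $\bTheta'$ after one more application of $\tilde R$, i.e.~$\tilde R D h_0$ is one order smoother than $h_0$ inside $\bTheta'$. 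First I would show that the principal symbol $\sigma_0(D)$, restricted to each fibre $\mathcal G_\eta$, lies in $\ell^2(\mathcal G_\eta)$ (it is the difference of two such symbols) and satisfies $\sigma_0(P)\sigma_0(D)=0$ as an operator identity on $\mathcal G_\eta$; here one uses that $\sigma_0$ is multiplicative on $\sC$-compatible FIO (composition is well-defined since everything is a sum of graph FIO) and that $P$ has order $0$, so that the principal symbol sees the $\eqml 0$ statement exactly.

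The crux is the structure of $\sigma_0(P)$ on $\mathcal G_\eta$. Just as in Section~\ref{s:ml-construct}, near any covector $\sigma^\star\tilde R\sigma^\star\tilde R$ decomposes into finitely many graph pieces, and by Lemma~\ref{l:ml-energy-conservation} composition with $\tilde r$ has operator norm $1$ on $\ell^2(\mathcal G_\eta)$ away from glancing cutoffs; composing with the cutoff $\sigma^\star$ can only decrease $\ell^2$ mass. Thus $\sigma_0(\sigma^\star\tilde R\sigma^\star\tilde R)$ is a contraction on $\ell^2(\mathcal G_\eta)$, so $\sigma_0(P)=I-\sigma_0(\sigma^\star\tilde R\sigma^\star\tilde R)$ is of the form $I-(\text{contraction})$. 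An element of its kernel in $\ell^2(\mathcal G_\eta)$ must therefore be a ``norm-preserving'' vector: every factor of $\sigma^\star$ and every factor of $\tilde R$ in the two passes through $\sigma^\star\tilde R\sigma^\star\tilde R$ must act isometrically on it. The isometry of $\sigma^\star$ on the relevant covectors forces $\sigma_0(D)$ to vanish on every covector lying over $\bTheta''$ after applying $\tilde R$ once — otherwise $\sigma^\star=1-\sigma$, which is cut off to zero on $\bTheta''$, would strictly decrease the $\ell^2$ norm. This is the mechanism behind ``only pathological nonuniqueness interferes,'' and $\ell^2$ boundedness is precisely what excludes it: a genuine kernel element with singularities persisting in $\bTheta'$ (Example~\ref{x:ml-nonuniqueness-2}) has infinite $\ell^2(\mathcal G_\eta)$ symbol mass.

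Concretely, then, I would argue: $\tilde R D h_0$ restricted to $\bTheta'$ has principal symbol (in the $(J_i)$ representation) given by $\sigma_0(\tilde R)\sigma_0(D)$ evaluated on $\mathcal G_\eta\cap S^*\bTheta'$; by the isometry-saturation argument this symbol is zero, so $\trestr{\tilde R D h_0}_{\bTheta'}$ is one Sobolev order smoother than $h_0$, which is the claimed congruence $\trestr{\tilde RB_1 h_0}_{\bTheta'}\eqml\trestr{\tilde RB_2 h_0}_{\bTheta'}\bmod H^{s+1}$. Since $D$ lands in the kernel, I also need to invoke propagation of singularities to control $\WF(\tilde R D h_0)$ and confirm the bookkeeping of orders ($\sigma^\star\tilde R$ preserves Sobolev order, so the gain of one order is the best possible and is exactly accounted for by the vanishing principal symbol). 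I expect the main obstacle to be making the ``saturation of the contraction forces vanishing on $\bTheta''$'' step fully rigorous: one must handle the possibly infinitely many graph components contributing to a given $\mathcal G_\eta$, track which ones pass through $\sigma^\star$ nontrivially, and verify that the operator-norm-$1$ statement of Lemma~\ref{l:ml-energy-conservation} combines with the strict loss from $\sigma^\star$ on $\bTheta''$ to give a strict inequality unless the offending symbol component is zero — essentially a careful unpacking of the equality case in $\|\sigma_0(\sigma^\star\tilde R\sigma^\star\tilde R)v\|=\|v\|$ on $\ell^2(\mathcal G_\eta)$, which is where the $\ell^2$ hypothesis does all its work.
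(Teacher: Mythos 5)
Your proposal is essentially the paper's own proof, spelled out at greater length: you set $D=B_1-B_2$, pass to principal symbols, observe $\sigma_0(D)\in\ell^2(\mathcal G_\eta)$ lies in the nullspace of $I-\sigma^\star\tilde r\sigma^\star\tilde r$, and invoke the equality case of the contraction estimate (exactly as at the end of the proof of Theorem~\ref{t:ml-convergence}, via Lemma~\ref{l:ml-energy-conservation}) to conclude $\tilde r\sigma_0(D)$ is supported in $T^*\bTheta'^\star$, yielding the one-order gain on $\bTheta'$. The one small imprecision is attributing the vanishing to $\sigma^\star\equiv 0$ on $\bTheta''$; in fact the saturation of $\lVert\sigma^\star\tilde r g\rVert=\lVert\tilde r g\rVert$ already forces support wherever $\sigma^\star=1$, i.e.\ outside all of $\bTheta$, but since $\bTheta'\subset\bTheta''\subset\bTheta$ this has no effect on the conclusion.
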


In particular, as long as there is some ``finite microlocal energy'' parametrix isolating $h\MDT$ on a conic set $\tilde{\mathcal S}\subset\To^*(\bRR^n\setminus\bGamma)$, all other finite microlocal energy parametrices on $\tilde{\mathcal S}$ also isolate $h\MDT$.

\subsection{Proofs}						\label{s:ml-proofs}

\subsubsection{Microlocal convergence (\sref{s:ml-convergence})}

	The major task in proving Theorem~\ref{t:ml-convergence} is to show that composition with $\tilde R$ has operator norm at most 1 on $\ell^2(\mathcal G_{\eta})$ for any $\eta$ --- a microlocal version of energy conservation. We begin with its proof.
	
	To present the energy conservation lemma, note that composition with $\tilde R$ is linear and well-defined on $\sC$-compatible FIO. It therefore induces a linear operator $\tilde r$ on their principal symbols in the space $C^\infty\left(S^*(\bRR^n\setminus\bGamma)\times\mathcal I\right)$. Since $\mathcal G_\eta$ is closed under the canonical relation of $\tilde R$, operator $\tilde r$ restricts to a linear operator on $\ell^2(\mathcal G_\eta)$ for any $\eta\in S^*(\bRR^n\setminus\bGamma)$. 
	
\begin{lemma}[Microlocal Energy Conservation]
	Let $\eta\in S^*(\bRR^n\setminus\bGamma)$. Then $\norm{\tilde r}\leq 1$ with respect to the operator norm on $\ell^2(\mathcal G_\eta)$.
	\label{l:ml-energy-conservation}
\end{lemma}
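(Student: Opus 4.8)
The plan is to reduce the statement to a pointwise fact about the principal symbols of the reflection/transmission operators at each interface point, combined with unitarity of the graph FIO $R_s$ away from interfaces. First I would recall the structure of $\tilde R$: after the standard microlocal splitting of $\partial_t^2 - c^2\Delta$ into half-wave factors, $\tilde R$ decomposes as $\nu\circ R_{2T}$ where $R_{2T}$ is a sum of graph FIO $R_s$, one for each admissible finite word $s\in\{\mathrm R,\mathrm T\}^j$ recording a sequence of reflections and transmissions, each $R_s$ carrying a canonical transformation $C_s$ built from free bicharacteristic flow interspersed with Snell's-law reflection/transmission maps at interfaces. The induced operator $\tilde r$ on $\ell^2(\mathcal G_\eta)$ is then (up to the harmless time-reversal $\nu$, which acts by an isometry on symbols) the transpose of the "branching" map on $\mathcal G_\eta$ sending a covector/path pair to all its successors, weighted by the scalar principal symbols of the reflection and transmission operators along each branch, together with the glancing-ray cutoffs built into $\tilde R$.

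The key step is the \emph{local energy balance at an interface}: at any point of $\Gamma$ where a bicharacteristic $\gamma$ hits with reflected branch $\gamma_{\mathrm R}$ and transmitted branch $\gamma_{\mathrm T}$, the principal symbols $r,t$ of the microlocal reflection and transmission operators satisfy a relation of the form $|r|^2 + (\text{Jacobian factor})\,|t|^2 = 1$, i.e.\ the total (microlocally-measured) energy flux is conserved across the interface. This is the geometric-optics version of the Fresnel relations; it is referenced in the excerpt as \eqref{e:geometric-ps-refl-trans} and established in Appendix~\ref{s:parametrix-construction}. Combined with the fact that the free propagators between interfaces are microlocally unitary graph FIO (so composition with them is an isometry on the $\ell^2$ symbol space, again up to Jacobian weights that are exactly absorbed by the natural half-density/energy normalization of the symbol space), one gets that the one-step branching map $\tilde r$ is norm-nonincreasing on $\ell^2(\mathcal G_\eta)$: the $\ell^2$ mass at $\eta$ is split among the successor covectors with weights whose squares sum to at most $1$ (exactly $1$ in the absence of glancing cutoffs, at most $1$ because the glancing-ray cutoffs in $\tilde R$ have symbol bounded by $1$). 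Summing over all of $\mathcal G_\eta$ — using that $\mathcal G_\eta$ is closed under the canonical relation and that distinct paths $i$ land in distinct copies in the index $\mathcal I$, so there is no constructive interference to worry about at the level of $\ell^2$ norms of symbol tuples — gives $\|\tilde r\,\sigma\|_{\ell^2(\mathcal G_\eta)} \le \|\sigma\|_{\ell^2(\mathcal G_\eta)}$ for every $\sigma$, hence $\|\tilde r\|\le 1$.

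The main obstacle I anticipate is bookkeeping the normalization so that the Jacobian factors coming from the canonical transformations (change of variables in the fiber) cancel precisely against the density weights implicit in calling this an "energy" norm; this is exactly the point where one must be careful that the chosen unitary representatives $J_i$ with $J_i^*J_i\eqml I$ are compatible with the way reflection/transmission symbols are normalized, so that "microlocally unitary" for the free flow really does mean "isometry on $\ell^2(\mathcal G_\eta)$". A secondary subtlety is treating glancing and near-glancing directions: one must invoke that $\tilde R$ already contains smooth cutoffs supported away from the glancing set $\mathcal W$ with symbols valued in $[0,1]$, so that near glancing the branching weights only shrink, never grow — this is what turns the equality (pure energy conservation) into the desired inequality and also sidesteps the points where $C_s$ fails to be a local diffeomorphism. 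Once the local balance and the normalization are pinned down, assembling the global estimate over the (at most countable, locally finite in the relevant time window) tree $\mathcal G_\eta$ is routine.
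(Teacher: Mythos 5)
Your plan and the paper's proof are aimed at the same result but take genuinely different routes, and the step you flag as the ``main obstacle'' is precisely the one the paper is designed to avoid. The paper never verifies a Fresnel-type energy balance at interfaces. Instead it exploits the global algebraic identity: away from glancing cutoffs, $\tilde R^2\eqml R^2=I$, hence $\tilde r^2=I$ on the symbol space, and $\norm{\tilde r}=1$ would follow at once if $\tilde r$ were self-adjoint. Almost the entire argument is then devoted to proving that self-adjointness: microlocal self-adjointness of $\tilde R$ does not automatically transfer to the symbol-level operator $\tilde r$ because of Maslov factors, and the paper tracks these explicitly through the singular symbol calculus and the Kashiwara index identity $\kappa(\ud\sC_i(V_\eta),V_\alpha,\ud C_s^{-1}(V_\beta)) = -\kappa(\ud\sC_j(V_\eta),V_\beta,\ud C_s(V_\alpha))$. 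The glancing case is then handled, as you anticipate, by factoring $\tilde R$ into glancing-free pieces (each an isometry by the preceding argument) interleaved with \PsiDO{} cutoffs of symbol modulus at most $1$. Your proposal does not touch the Maslov/Kashiwara bookkeeping at all, which is the main technical content of the published proof.

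Beyond that, two concrete worries about the route you sketch. First, the local balance $\abs{r}^2 + (\text{Jacobian})\abs{t}^2 = 1$ is not a short step to fill in: the paper explicitly warns, right after~\eqref{e:geometric-ps-refl-trans}, that $\sigma_0(\iota M\subT)$ can exceed $1$, because $M\subT$ acts on boundary traces rather than on Cauchy data in the energy norm. To get your balance you would have to conjugate by the boundary-to-solution maps and the chosen unitary representatives $J_i$ and show the density factors come out exactly right --- which is exactly the normalization problem you defer, and which the paper sidesteps entirely by going through $\tilde r^2=I$. Second, your assertion that ``distinct paths land in distinct copies in $\mathcal I$'' conflicts with the paper's own setup: $\mathcal I$ enumerates distinct \emph{canonical transformations}, and the text warns that a single $\sC_i$ may be represented by infinitely many different compositions $C_{s^{(1)}}\circ\dotsb\circ C_{s^{(m)}}$. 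So two branching histories can in principle land in the same slot $(\beta,j)\in\mathcal G_\eta$ and interfere as amplitudes rather than adding as energies, undermining the sum-over-the-tree argument; the paper's self-adjointness argument is insensitive to this because it never needs to add local energy budgets. As written, then, the proposal has a real gap at its central step, not just a bookkeeping loose end.
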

\begin{proof}	
	First, assume that there are no cutoffs in the parametrix $\tilde R$ due to glancing rays originating in $\mathcal G_\eta$.	In this case, $\tilde R^2\eqml R^2=I$, so $\tilde r^2=I$ likewise. If $\tilde r$ were self-adjoint, it would follow that $\norm{\tilde r}_{\ell^2}=1$. Certainly $\tilde R$ is microlocally self-adjoint, since $\tilde R^*\eqml R^*=R\eqml \tilde R$. This property does not immediately carry over to $\tilde r$ due to the presence of Maslov factors; fortunately, it is still possible to show $\tilde r$ is self-adjoint.
	
	Let $(\alpha,i),\,(\beta,j)\in \mathcal G_{\eta}$, and let $e_{\alpha,i},\,e_{\beta,j}\in \ell^2(\mathcal G_\eta)$ be the vectors having 1 in the $(\alpha,i)$ or $(\beta,j)$ position respectively and zeros elsewhere. It suffices to show that
	\begin{equation}
		\form{\tilde r e_{\alpha,i},\,e_{\beta,j}} = \overline{\form{\tilde r e_{\beta,j},\,e_{\alpha,i}}}.
		\label{e:tilde-r-sa-comparison}
	\end{equation}
	To compute each side, we choose \PsiDO{}s $P,P'\in\Psi^0$ with $\sigma_0(P)=\sigma_0(P')=1$ near $\alpha,\,\beta$ respectively. Decompose
	\begin{align}
		\tilde RPJ_i &\eqml \sum_{j\in\mathcal I} Q_j J_j,		&
		\tilde RP'J_j &\eqml \sum_{i\in\mathcal I} Q'_iJ_i.
		\label{e:tilde-r-sa-decomp}
	\end{align}
	The left- and right-hand sides of~\eqref{e:tilde-r-sa-comparison} then become $\overline{\sigma_0(Q_j)(\beta)}$ and $\sigma_0(Q'_i)(\alpha)$.
	
	If there is no $C_s$ carrying $(\alpha,i)$ to $(\beta,j)$ (that is, $C_s(\alpha)=\beta$ and $C_s\circ\sC_i=\sC_j$ on their common domain of definition), there is also no $C_{s'}$ carrying $(\beta,j)$ to $(\alpha,i)$, and vice versa. In this case, both sides of~\eqref{e:tilde-r-sa-comparison} are zero. Otherwise, there are unique $C_s$ and $C_{s'}$ satisfying the above; let $R_s$ and $R_{s'}$ be the microlocal restrictions of $\tilde R$ to each of these canonical relations near $\alpha$ and $\beta$ respectively. We may replace $\tilde R$ in the first and second equations of~\eqref{e:tilde-r-sa-decomp} by $R_s$ and $R_{s'}$, respectively. Furthermore, $R_{s'}\eqml R_s^*$ since $\tilde R$ is microlocally self-adjoint and $C_{s'}=(C_{s})^{-1}$.
	
	Now we apply singular symbol calculus (see~\cite{C}) to both sides of the first equation of~\eqref{e:tilde-r-sa-decomp} and evaluate at $\beta$ and $\alpha$. Let lowercase letters ($r_s$, $j_i$, etc.) denote singular principal symbols (of $R_s$, $J_i$, etc.). This yields
	\begin{nalign}
		r_s(\beta)j_i(\eta)i^{\kappa(\ud\sC_i(V_\eta),\,V_\alpha,\,\ud C_s^{-1}(V_\beta))/2} &= q_j(\beta) j_j(\eta),	\\
		r_{s'}(\alpha)j_j(\eta)i^{\kappa(\ud\sC_j(V_\eta),\,V_\beta,\,\ud C_s(V_\alpha))/2} &= q'_i(\alpha) j_i(\eta),
	\end{nalign}
	where $V_\gamma$ denotes the vertical subspace in $T_\gamma T^*\mathbf (\bRR^n\setminus\bGamma)$, and $\kappa$ is the Kashiwara index~\cite{LV,S}. Solving for $\overline{q_j(\beta)}$ and $q'_i(\alpha)$ we obtain
	\begin{nalign}
		\form{\tilde re_{\alpha,i},\,e_{\beta,j}} &= \overline{q_j(\beta)} = \overline{r_s(\beta)}\frac{\;\overline{j_i(\eta)}\;}{\overline{j_j(\eta)}} i^{-\kappa(\ud\sC_i(V_\eta),\,V_\alpha,\,\ud C_s^{-1}(V_\beta))/2},			\\
		\overline{\form{\tilde r e_{\beta,j},\,e_{\alpha,i}}} &= q'_i(\alpha) = r_{s'}(\alpha)\frac{j_j(\eta)}{j_i(\eta)} i^{\kappa(\ud\sC_j(V_\eta),\,V_\beta,\,\ud C_s(V_\alpha))/2}.
	\end{nalign}
	Comparing terms, $\overline{r_s(\beta)}=r_{s'}(\alpha)$ since $R_{s'}=R_s^*$, and similarly $\overline{j_i(\eta)}/\overline{j_j(\eta)}=j_j(\eta)/j_i(\eta)$, because $J_i$ being unitary implies $\abs{j_i}=1$. As for the Kashiwara indices, since $\kappa$ is coordinate-invariant and alternating,
	\begin{nalign}
		\kappa(\ud\sC_i(V_\eta),V_\alpha,\ud C_s^{-1}(V_\beta))
												&= \kappa(\ud\sC_j(V_\eta),\ud C_s(V_\alpha),V_\beta)			\\
												&= -\kappa(\ud\sC_j(V_\eta),V_\beta,\ud C_s(V_\alpha)).
	\end{nalign}
	The conclusion is that $\tilde r$ is self-adjoint, and therefore $\norm{\tilde r}=1$, since $\norm{\tilde r^2}=\norm{I}=1$.
	
	In the presence of near-glancing rays in $\mathcal G_\eta$, the parametrix constructed in appendix~\ref{s:parametrix-construction} includes pseudodifferential cutoffs away from glancing rays (in constructing $\varphi^+$ and $\JBS$). In a neighborhood of any $\alpha\in\mathcal G_\eta$ for which some broken ray is at least partially cut off, $\tilde R$ is microlocally equivalent to a composition of propagators and pseudodifferential cutoffs
	\begin{equation}
		\tilde R \eqml \upsilon\circ \tilde R_{t_m} \circ P_{m-1} \circ \tilde R_{t_{m-1}} \circ \dotsb \circ P_{1} \circ \tilde R_{t_1},
	\end{equation}
	where $t_1+\dotsb+t_m=2T$ and $P_1,\dotsc,P_{m-1}\in\Psi^0$ have principal symbols of magnitude at most 1, and none of the intermediate propagators $\tilde R_{t_k}$ involve glancing ray cut offs when $\tilde R$ is restricted to the neighborhood of $\alpha$.
	
	For each $k=0,\dotsc,m$, we let $\sC^{(k)}=\{\smash{C^{\smash{(k)}}_s}\circ \sC_i\}$ be the set of compositions of $\sC_i$'s with canonical graphs $C^{\smash{(k)}}_s$ defined as in~\sref{s:ml-convergence} but with $2T$ replaced by $t_1+\dotsb+t_k$. Naturally, $\sC^{(0)}\!=\sC^{(m)}\!=\sC$. Choose sets of corresponding unitary operators $\{J^{\smash{(k)}}_i\}$ as before for each $k$. Then composition by each $\tilde R_{t_k}$ sends $\sC^{(k)}$- to $\sC^{(k+1)}$-compatible FIO, and as before induces a map between their principal symbol spaces; the argument above shows it is an isometry with respect to the $\ell^2$ norms.
	 
	 Composition with the pseudodifferential cutoffs $P_k$ acts by pointwise multiplication by $p_k$ on these $\ell^2$ spaces, and hence has operator norm at most 1. Since $\sC^{(m)}=\sC$, operator $\tilde r$ is given by the composition of all these operators $\tilde r_{t_m}\circ p_{m-1}\circ \tilde r_{t_{m-1}}\circ\dotsb$, and thus $\norm{\tilde r}\leq 1$.
\end{proof}

\begin{proof}[Proof of Theorem~\ref{t:ml-convergence}]	
	We begin with the first statement of the theorem: convergence of the $N_k$'s principal symbols in $\ell^2(\mathcal G_{\eta})$.
	
	 Since composition with $\sigma^\star$ multiplies principal symbols pointwise by $\sigma^\star$, it is a linear operator on $\ell^2(\mathcal G_\eta)$ with norm at most 1. Therefore $\sigma^\star\tilde r\sigma^\star\tilde r$, the operation of principal symbol composition with $\sigma^\star\tilde R\sigma^\star\tilde R$, has norm at most 1 as an operator on $\ell^2(\mathcal G_\eta)$. 

	Let $n_k$, $\tilde a$, and $i$ denote the principal symbols of $N_k$, $\tilde A$, and the identity with respect to the $J_i$. We will see that $\tilde a$'s existence implies the convergence of $n_k$ by the spectral theorem, applied to a symmetrization of $\sigma^\star\tilde r$.
	
	Restricting to $\mathcal G_\eta$, suppose
	\begin{equation}
		(I-\sigma^\star \tilde r\sigma^\star\tilde r)u=i \text{\qquad for some } u\in \ell^2(\mathcal G_\eta).
		\label{e:posited-u}
	\end{equation}
	Then $u=i+v$ for some $v$ in the range of $\sigma^\star$. In particular, $v$ is supported in $\mathcal G_\eta\cap T^*\bTheta'^\star$. Solving~\eqref{e:posited-u} for $w=v/\sqrt{\sigma^\star}$ gives
	\begin{align}
		(I-\sqrt{\sigma^\star} \tilde r\sigma^\star\tilde r \sqrt{\sigma^\star})\frac{v}{\sqrt{\sigma^\star}}	&=\sqrt{\sigma^\star}\tilde r\sigma^\star\tilde ri.
		\label{e:posited-w}
	\end{align}
	As the process is reversible, $u$ is a solution of~\eqref{e:posited-u} if and only if $w=(u-i)/\sqrt{\sigma^\star}$ solves~\eqref{e:posited-w} in the weighted space $\ell^2(\mathcal G_\eta\cap T^*\bTheta'^\star,\sigma^\star)$. Now, if there is any solution to~\eqref{e:posited-w}, applying Lemma~\ref{l:only-neumann} to the self-adjoint operator $\sqrt{\sigma^\star}\tilde r \sqrt{\sigma^\star}$ shows that the Neumann series
	\begin{equation}
		w_0 = \sum_{k=0}^\infty \big[\,\sqrt{\sigma^\star}\tilde r\sigma^\star \tilde r\sqrt{\sigma^\star}\,\big]^k \sqrt{\sigma^\star}\tilde r\sigma^\star\tilde ri
	\end{equation}
	converges in $\ell^2(\mathcal G_\eta\cap T^*\bTheta'^\star,\sigma^\star)$ to the minimal-norm solution of~\eqref{e:posited-w}. The corresponding $u_0=i+\sqrt{\sigma^\star} w_0\in\ell^2(\mathcal G_\eta)$ is exactly $\lim n_k$.
	
	In particular, $u=\tilde a$ is a solution of~\eqref{e:posited-u} and it is in $\ell^2(\mathcal G_\eta)$ since its support in $\mathcal G_\eta$ is finite. Hence, the Neumann series partial sum principal symbols converge in $\ell^2(\mathcal G_\eta)$. They may not converge to $\tilde a$, as $I-\sigma^\star \tilde r\sigma^\star \tilde r$ may have a nontrivial nullspace.

	Consider this nullspace. Suppose $(I-\sigma^\star\tilde r\sigma^\star\tilde r)g=0$ for some $g\in\ell^2(\mathcal G_\eta)$, so that $g = \sigma^\star\tilde r\sigma^\star\tilde r g$. But since the operator norms of $\sigma^\star$ and $\tilde r$ are at most 1, we must have 
	\begin{equation}
		\norm{g} = \norm{\tilde r g} = \norm{\sigma^\star\tilde r g} = \norm{\tilde r\sigma^\star \tilde r g} = \norm{\sigma^\star\tilde r\sigma^\star\tilde rg}. 
	\end{equation}
	The second equality implies that $\tilde r g$ is supported in $T^*\bTheta'^\star$. Taking $g=\tilde a-\lim n_k$, we conclude $\tilde r a$ and $\tilde r\circ \lim n_k$ are equivalent in $T^*\bTheta'^\star$, finishing the proof.
\end{proof}

\subsubsection{Constructive parametrix (\sref{s:ml-construct})}       \label{s:ml-construct-proof}

\begin{proof}[Proof of Proposition~\ref{p:constructive-parametrix}]
	The proof is purely technical, specifying a recursive procedure for constructing a set of incoming singularities that ensure that only the directly-transmitted singularity reaches $D^+\MDT$. The notation of Appendix~\ref{s:parametrix-construction} will be used throughout.
	
	Our key constructions will be order-0 FIO $\Xi\supi_\pm,\Xi\supo_\pm\colon C^\infty(\RR\times\bdy Z)\to \mathcal D'(\mathbf Z)$ producing tails outside $\Theta$ for $(\pm)$-escapable bicharacteristics. Following~\sref{s:ml-construct}, the $\Xi_+\supio$-constructed tail for a singularity on a $(+)$-escapable bicharacteristic ensures this singularity escapes $\Theta$ at time $2T$, without generating any singularities in $h\MDT$'s microlocal forward domain of influence, $D^+\MDT$. The $\Xi\supio_-$-constructed tail generates a given singularity on a $(-)$-escapable bicharacteristic, again without causing any singularities to enter $D^+\MDT$. The $\Xi_\pm\supo$ are defined on outgoing boundary data while the $\Xi_\pm\supi$ are defined on incoming data, microlocally near the final, resp., initial covectors of $(\pm)$-escapable bicharacteristics.
	
	Let $\gamma\colon(t_-,t_+)\to T^*\mathbf Z$ be a $(\pm)$-escapable bicharacteristic. Denote by $\beta\supo$ the pullback to the boundary of its final point: $\beta\supo=(di_\Gamma)^*\gamma(t_\pm)$, where by abuse of notation we consider $\gamma(t_\pm)$ as a space-time covector, in $\To^*(\RR\times\mathbf Z)$. Define $\beta\supi=(di_\Gamma)^*\gamma(t_\mp)$ similarly. We now define $\Xi\supio_\pm$ microlocally near $\beta\supio$, starting with the incoming maps $\Xi\supi_\pm$.
\begin{itemize}
	\item
	\emph{If $t_\pm\in(0,2T)$:} We simply follow the bicharacteristic and apply $\Xi\supo_\pm$ at the other end. In the $(+)$ case define $\Xi\supi_+\eqml \Xi\supo_+\JBB$ near $\beta\supi$. In the $(-)$ case, define $\Xi_-\eqml \Xi_-  \JBB^- M$ near $\beta\supi$, where $\JBB^-=\upsilon\JBB\upsilon$ is like $\JBB$ but propagating backward in time. 
	
	\item
	\emph{If $\gamma$ escapes, $t_\pm\notin[0,2T]$:} This is the terminal case. In the $(+)$ case, there is nothing to do: define $\Xi_+\eqml 0$ near $\beta\supi$. For the $(-)$ case, define $\Xi_-\eqml \JCB^{-1}$ near $\beta\supi$ to obtain the necessary Cauchy data.
\end{itemize}
	
\noindent	We now turn to $\Xi\supo_\pm$, considering each case in the definition of $(\pm)$-escapability.
	
\begin{itemize}
	\item
	\emph{If $\gamma$ escapes:} This case never arises: $\Xi\supi_\pm$ is not defined in terms of $\Xi\supo_\pm$ for such $\gamma$.

	\item	
	\emph{If all outgoing bicharacteristics are $(\pm)$-escapable:} Recursively apply $\Xi\supi_\pm$ to the reflected and transmitted (if any) bicharacteristics, defining $\Xi\supo_\pm\eqml \Xi\supi_\pm M$ near $\beta\supo$.
	
	\item
	\emph{If one outgoing bicharacteristic is $(\pm)$-escapable, and the opposite incoming ray is $(\mp)$-escapable:} This is the core case. In the $(+)$ case, near $\beta\supo$ let
	\begin{equation}
		\Xi\supo_+\eqml
		\when{
			-\Xi\supi_-M\subR^{-1} M^{}\subT + \Xi\supi_+ (M^{}\subR-M^{}\subT M\subR^{-1} M^{}\subT), 	& \qquad\text{case (R),}\\
			-\Xi\supi_-M\subT^{-1} M^{}\subR + \Xi\supi_+ (M^{}\subT-M^{}\subR M\subT^{-1} M^{}\subR), 	& \qquad\text{case (T),}
		}
	\end{equation}
	according to whether the reflected (R) or transmitted (T) outgoing ray is $(+)$-escapable. The inverses are all microlocal. The $(-)$ case is slightly different: near $\beta\supo$,
	\begin{equation}
		\Xi\supo_-\eqml
		\when{
			\Xi\supi_-M\subR^{-1} + \Xi\supi_+ M^{}\subT  M\subR^{-1}, 	& \qquad\text{case (R),}\\
			\Xi\supi_-M\subT^{-1} + \Xi\supi_+ M^{}\subR  M\subT^{-1}, 	& \qquad\text{case (T).}
		}
	\end{equation}
	For case (R), the requirement in the definition that $c$ be discontinuous at $\beta\supio$ implies that $M\subR$'s principal symbol is nonzero there (cf.~\eqref{e:geometric-ps-refl-trans}), guaranteeing the existence of a parametrix $M\subR^{-1}$ near $\beta\supio$. For case (T), $M\subT$ always has positive principal symbol, regardless of $c$.
\end{itemize}

	While $\Xi\supio_\pm$ is defined recursively, by definition only finitely many recursions are needed to reach the non-recursive case where $\gamma$ escapes. Since all the cases are open conditions on $\beta$, operators $\Xi\supio_\pm$ are well-defined (assuming that in regions where both the second and third cases hold, we decide between them consistently). Furthermore, the $\Xi\supio_\pm$ are order-0 FIO, since they are microlocally sums of compositions of order-0 FIO associated with invertible canonical graphs.
	
	We now use $\Xi\supio_\pm$ to define a parametrix $A$.
	Given $\eta\in \mathcal S\subset \To^*\bTheta'$, consider the escaping bicharacteristics starting at $\eta$. Each is associated with a distinct sequence of reflections and transmissions $s=(s_1,\dotsc,s_k)\in \{R,T\}^k$ for some $k$, and a corresponding propagation operator
	\begin{equation}
		\mathcal P_s = \JBB M_{s_k}\dotsb\JBB M_{s_2}\JBB M_{s_1}\JCB.
	\end{equation}
Let $\mathfrak S$ be the set of escaping bicharacteristic sequences $s$, and define
	\begin{equation}
		A_\eta = I + \Xi\supo_+\sum_{s\in\mathfrak S} \mathcal P_s,
		\label{e:def-A-eta}
	\end{equation}
	Then define $A$ by patching together the $A_\eta$ with a microlocal partition of unity. As $\Xi\supio_\pm$\!, $\mathcal P_s$ are FIO of order 0, so is $A$.
	
	We now check that $A$ isolates $h\MDT$ and is therefore a microlocal right inverse for $I-\sigma^\star R\sigma^\star R$ by Lemma~\ref{l:isolate-mdt}. Let $h_0$ be microsupported in a sufficiently small neighborhood of $\eta\in\mathcal S$ and let $h_\infty = Ah_0$. Define the outgoing boundary parametrix
	\begin{equation}
		\mathfrak B = \JBS \sum_{k=0}^\infty (M\JBB)^k.
	\end{equation}
	With $\mathcal P_s$, $\mathfrak S$ as before, define $\mathfrak S^\perp$ to be the set of sequences $s$ for which no $s'\in\mathfrak S$ is a prefix. Then $\tilde Fh_\infty$ splits into three components:
	\begin{equation}
		\tilde Fh_\infty = \tilde F(h_\infty-h_0) + \mathfrak B M \sum_{s\in\mathfrak S}\mathcal P_sh_0 + \sum_{s\in\mathfrak S^\perp} \tilde F_s.
	\end{equation}
	For $t\in[T,2T]$, the last term is the wave field of $h\MDT$; accordingly, it suffices to prove that the sum of first two terms are smooth in $D^+\MDT$. Rewrite
	\begin{equation}
		\tilde F(h_\infty-h_0) + \mathfrak BM \sum_{s\in\mathfrak S}\mathcal P_sh_0
		=
		\sum_{s\in\mathfrak S} (\tilde F\Xi\supo_+ + \mathfrak BM) \mathcal P_s h_0.
	\end{equation}
	By construction, $\tilde F\Xi\supo_+ + \mathfrak BM$ is smoothing at the terminal end of $(+)$-escapable bicharacteristics, and in particular on $\WF(\mathcal P_s h_0)$ for each $s\in\mathfrak S$, as desired. Hence $\tilde R_{2T} h_0\eqml\tilde R_T h\MDT$. Applying Lemma~\ref{l:isolate-mdt}, we conclude $(I-\sigma^\star\tilde R\sigma^\star\tilde R)Ah_0\eqml h_0$. The same result holds for all $h_0\in\mathcal D'_{\mathcal S}$ by a microlocal partition of unity.
\end{proof}

\subsubsection{Uniqueness (\sref{s:ml-uniqueness})}

\begin{proof}[Proof of Proposition~\ref{p:ml-uniqueness}]
	Let $b_1$, $b_2$, $i$ be the principal symbols of $B_1$, $B_2$, and the identity. Letting $\sigma^\star$ and $\tilde r$ denote the operators on the space of principal symbols induced by multiplication with $\sigma^\star$ and composition with $\tilde R$, respectively, $(I-\sigma^\star\tilde r\sigma^\star\tilde r)(b_1-b_2)=0$. As in the proof of Theorem~\ref{t:ml-convergence}, it follows that $\tilde r(b_1-b_2)$ is supported in $T^*\bTheta'^\star$.
\end{proof}

\section{Comparison of the exact and microlocal analyses}					\label{s:compare}

Both the exact analysis of Section~\ref{s:exact} and the microlocal analysis of Section~\ref{s:microlocal} prove that scattering control isolates a certain portion of the wave field of $h_0$ at $t=T$, while effectively erasing the rest. Our two analyses, however, predict the isolation of two \emph{different} portions of the wave field. Surprising at first glance, this disparity provides further insight on scattering control, which we explore in this section.

While the arguments are quite general, we consider for simplicity two particular examples that illustrate the fundamental differences between dimensions $n=1$ and $n>1$. In the one-dimensional example, the microlocal and exact analyses align as $h\DT$ and $h\MDT$ are essentially equal; the result is unconditional convergence of the Neumann iteration, both exactly and microlocally. In higher dimensions, however, $h\DT$ and $h\MDT$ can be quite different, causing a loss of convergence in finite energy space.

\subsection{Convergence in $n=1$ dimension}

Let $\Omega=(\eps,\infty)$ and $\Theta=(0,\infty)$ for fixed $\eps>0$; let $\Theta',\Theta''$ be arbitrary. Let $c$ be piecewise smooth on $\RR$, and equal to 1 on $\Omega^\star$. 
In general, the distance of a point from $\bdy\Theta$ is the minimum distance of a singularity at that point from $\bdy\Theta$:
\begin{equation}
	d(x,\bdy\Theta) = \min_{\xi\in \To^*_x\RR\vphantom{\mathring T}}d(\xi,\bdy T^*\Theta).
	\label{e:1D-cotangent-distance}
\end{equation}
In one dimension, this means $d^*_{T^*\Theta}(\xi)=d^*_\Theta(x)$ if $\xi\in\To^*_x\RR$. Hence, $h\DT$ and $h\MDT$ are essentially equivalent, differing only in their respective usage of harmonic extensions and smooth cutoffs. We now discuss the microlocal and exact behaviors that arise in scattering control.

On the microlocal side,~\eqref{e:1D-cotangent-distance} implies every returning bicharacteristic is trivially $(+)$-escapable, as no glancing or totally reflected waves arise. Consequently, the constructive parametrix $A$ may be defined everywhere in $\To^*\bTheta'$, and hence by Theorem~\ref{t:ml-convergence} microlocal Neumann iteration always converges in principal symbol.

On the exact side, the \emph{exact} Neumann series converges to a finite energy solution $h_\infty$ of $(I-\pi^\star R\pi^\star R)h_\infty=h_0$, thanks again to microlocal analysis. To see why, first separate the initial data into rightward- and leftward-traveling waves (possible since $c=1$ there). The rightward-traveling portion has a directly transmitted component inside $\Theta$, which is its image under an elliptic graph FIO. Due to the ellipticity this directly transmitted wave carries a positive fraction of the initial energy, by \Garding's inequality and unique continuation (compare Stefanov and Uhlmann's work~\cite{SU-TATBrain}). Leftward-traveling waves, meanwhile, may be safely ignored, since $c$ is constant for $x<0$. The full proof requires some care, and we defer it to~\sref{s:compare-proofs}.

\begin{proposition}
	Let $\Omega$, $\Theta$, $c$ be as above, and $\eps<2T$. Then $\norm{\pi^\star R\pi^\star R}<1$ on $H^1(\Omega^\star)\oplus L^2(\Omega^\star)$; in particular $\sum_{k=0}^\infty (\pi^\star R)^{2k}h_0$ always converges.
	\label{p:1D-convergence}
\end{proposition}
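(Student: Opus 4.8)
The plan is to show that propagation for time $2T$ returns a definite fraction of the energy of any admissible tail $g\in\mathbf H^\star$ to $\Theta$, and to extract the operator-norm bound via a \Garding{}-plus-unique-continuation argument in the spirit of~\cite{SU-TATBrain}. First I would reduce the claim. Since $\pi^\star R\pi^\star R=(\pi^\star R)^2$ and $\pi^\star Rh\in\mathbf H^\star$ for every $h\in\mathbf C$, it suffices to find $\delta>0$ with $\norm{\pi^\star Rg}^2\le(1-\delta)\norm g^2$ for all $g\in\mathbf H^\star$. As $R$ is unitary and $I-\pi^\star=\clsr\pi$ projects orthogonally onto $(\mathbf H^\star)^\perp$, we have $\norm{\pi^\star Rg}^2=\norm g^2-\norm{\clsr\pi Rg}^2$; and since $\clsr\pi w$ agrees with $w$ on $\Theta$ (Lemma~\ref{l:characterization-of-It}) while $\nu$ leaves energy inside $\Theta$ unchanged,
\begin{equation*}
	\norm{\clsr\pi Rg}^2=\En(\clsr\pi Rg)\ \ge\ \En_\Theta(Rg)\ =\ \En_\Theta(R_{2T}g).
\end{equation*}
So everything reduces to the uniform estimate $\En_\Theta(R_{2T}g)\ge\delta\norm g^2$ for $g\in\mathbf H^\star$.

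Next I would prove this estimate modulo a compact error, microlocally. Here $\Upsilon=(-a,\infty)$ with $a>2T$, so $g\in\mathbf H^\star\subseteq\tilde{\mathbf H}^\star$ is supported in $(-a,0)$ and, by Proposition~\ref{p:H-star-characterization}, stationary harmonic on $(-a,-2T)$; a one-line computation with $g_0\in H^1_0((-a,0))$ bounds $\En_{(-a,-2T)}(g)$ by a fixed fraction of $\En_{(-2T,0)}(g)$, so a definite fraction of $\norm g^2$ lives in $(-2T,0)$, where $c\equiv1$. There I would split $g$ into a rightward part $g^+$ and a leftward part $g^-$. The leftward wave supported in $(-2T,0)$ propagates into $(-4T,0)$ and so stays clear of $\Theta$ for $t\in[0,2T]$, whence $g^-$ lies in $\mathbf G$ modulo a smoothing term; since $g\perp\mathbf G$ this forces $g^-$ to be lower order, giving $\norm{g^+}^2\ge c_0\norm g^2-C\norm{Kg}^2$ for a compact $K$ (compact because all the tails involved are supported in the fixed bounded set $\overline{(-a,0)}$). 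The rightward part, on the other hand, is carried into $\Theta$: its directly transmitted component at $t=2T$ — the geometric-optics wave obtained by composing free propagation with the transmission operators at the interfaces, each an elliptic graph FIO — is microsupported over $\Theta=(0,\infty)$, and by \Garding's inequality applied to this elliptic composition it retains at least a fixed fraction of $\En(g^+)$, up to a compact error. The remaining, reflected, parts of the wave field of $g^+$ propagate along bicharacteristics transverse to the transmitted one, so their interference with it inside $\En_\Theta(R_{2T}g)$ is again lower order. Collecting these bounds yields
\begin{equation*}
	\En_\Theta(R_{2T}g)\ \ge\ \delta_2\norm g^2-C'\norm{K'g}^2
\end{equation*}
with $K'$ compact on $\mathbf H^\star$.

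Finally I would absorb the compact error by weak compactness and unique continuation. If no uniform $\delta$ existed, there would be $g_n\in\mathbf H^\star$ with $\norm{g_n}=1$ and $\En_\Theta(R_{2T}g_n)\to0$; passing to a subsequence, $g_n\rightharpoonup g_\infty$ weakly in $\mathbf C$, so $K'g_n\to K'g_\infty$ strongly, and the inequality above forces $\norm{K'g_\infty}^2\ge\delta_2/C'>0$, hence $g_\infty\ne0$. On the other hand $R_{2T}g_n\rightharpoonup R_{2T}g_\infty$ and $\En_\Theta$ is continuous and convex, hence weakly lower semicontinuous, so $\En_\Theta(R_{2T}g_\infty)=0$. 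Since $g_\infty\in\tilde{\mathbf H}^\star$ is supported outside $\Theta$ we have $\clsr\pi g_\infty=0$, and $\En_\Theta(R_{2T}g_\infty)=0$ together with $(R_{2T}g_\infty)_0\in H^1_0(\Upsilon)$ gives $\clsr\pi Rg_\infty=0$; by the characterization $\mathbf G=\set{g}{\clsr\pi g=\clsr\pi Rg=0}$ established in the proof of Theorem~\ref{t:basic-maf}, $g_\infty\in\mathbf G$, so $g_\infty\in\mathbf G\cap\mathbf C=\{0\}$ — a contradiction. Hence the uniform estimate holds, and with it $\norm{\pi^\star R\pi^\star R}\le(1-\delta)^{1/2}<1$, so $\sum_k(\pi^\star R)^{2k}h_0$ converges.

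The hard part will be the microlocal lower bound of the second step: turning ``the directly transmitted wave keeps a definite fraction of the energy'' into the displayed inequality requires care with the harmonic corrections carried by $\clsr\pi$ near $\bdy\Theta$ (the very reason sharp cutoffs were avoided), with uniform ellipticity and symbol lower bounds for the product of transmission operators across the finitely many interfaces met in time $2T$, and with the claims that the reflected waves interfere only to lower order and that $g^-$ lies in $\mathbf G$ modulo smoothing — in essence the argument of Stefanov and Uhlmann adapted to the present geometry.
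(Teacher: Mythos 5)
Your overall strategy is close to the paper's: both proofs reduce to an estimate of the form $\En_\Theta(R_{2T}\cdot)\gtrsim\norm\cdot^2$, both use the d'Alembert split into rightward- and leftward-moving components in the region where $c\equiv 1$, both apply \Garding's inequality to the elliptic direct transmission operator to get the estimate modulo a compact error, and both remove the compact error through a compactness/unique-continuation argument (the paper packages this as ``$h_0 \perp\ker\clsr\pi R$ eliminates the compact term''; you unpack it as a contradiction with a weakly convergent sequence, which is the standard proof of that same lemma). Your cleaner exact handling of the leftward part is fine — in fact it is simpler than you make it: $g^-\in\mathbf G$ exactly (not only modulo smoothing), the d'Alembert pieces are energy-orthogonal for $c\equiv 1$, and $g\perp\mathbf G$ then forces $g^-=0$ outright.

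The genuine gap is in the step you flag as ``requires care'' and then wave away: the claim that the reflected parts of the wave field of $g^+$ interfere with the directly transmitted part only to lower order. In one dimension this is false when $\supp g$ is wide. A doubly- (or $2k$-fold) reflected wave is again rightward-moving at $t=2T$, and if the support of $g$ is wider than the round-trip travel time between two interfaces, the multiply-reflected wave generated by a shallow portion of $g$ arrives at $t=2T$ at the \emph{same positions and with the same direction} as the direct transmission of a deeper portion of $g$. These are not microlocally transverse; they overlap in $T^*\RR$, and destructive interference is a leading-order phenomenon, not a lower-order one. Since you work with $g\in\mathbf H^\star$, whose support is allowed to fill all of $(-2T,0)$ and beyond, this situation is unavoidable. \Garding{} on the direct transmission operator alone then gives a lower bound for $\En_\Theta$ of a \emph{piece} of the wave field, but not of $R_{2T}g$ itself, and the displayed estimate does not follow.

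The paper's proof addresses exactly this: it first establishes the \Garding{} estimate assuming $\supp h_0$ is contained in a single interval of width $\delta$ chosen smaller than the interface spacings (so that no multiply-reflected ray can land in the direct-transmission window), and then handles wide support by covering the support with width-$\delta$ intervals $I_j$ and running a pigeonhole argument: pick the smallest index $j_0$ with $E_{j_0}\ge 2C_2^{-1}\sum_{i>j_0}E_i$, use finite speed of propagation to see that $\En_{I''}(Rh_0)$ depends only on the pieces with $i\ge j_0$, observe that the direct transmission of the $j_0$ piece then dominates, and finally control all of $\norm{f'}$ by $\norm{f'_{j_0}}$ using the minimality of $j_0$. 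Some such argument, or the paper's alternative observation that ``if cancellation occurs the shallower and deeper parts must be related by an elliptic FIO,'' is needed; your argument as written does not supply it.
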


\subsection{Convergence in $n>1$ dimensions}

\begin{figure}
	\centering
	\includegraphics{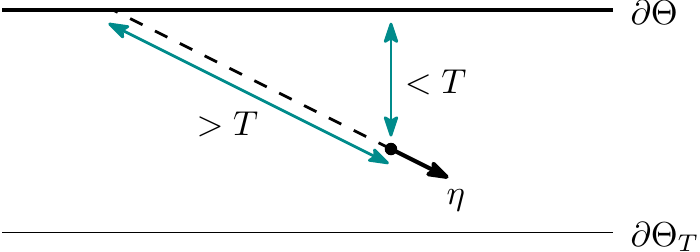}
	\caption{A singularity in $h\MDT$ but not $h\DT$. Its distance along the slanted bicharacteristic is greater than $T$, but its base point is less than distance $T$ from the boundary. Hence $\eta\in (T^*\Theta)_T$ but $\eta\notin T^*(\Theta_T)$.}
	\label{f:slant-singularity}
\end{figure}

Consider a halfspace $\Theta=\{x_n\geq 0\}$, and let $c(x)=1$. Any $\eta=(x',x_n,\xi',\xi_n)\in \To^*\bTheta$ with $x_n>T$ then belongs to $T^*(\bTheta_T)$. However, if $\xi'\neq 0$, then $d^*_{T^*\bTheta}(\eta)>x_n$ and $\eta\notin(T^*\bTheta)_T$ if $T$ is sufficiently close to $x_n$ (Figure~\ref{f:slant-singularity}). This discrepancy, which of course occurs for general $\Theta$, $c$ when $n>1$, implies that $h\DT$ is fundamentally smaller than $h\MDT$. Furthermore, it prevents the exact Neumann series from converging (in finite energy space) for any $h_0$ producing singularities in the gap $(T^*\bTheta)_T\setminus \clsr{T^*(\bTheta_T)}$, as we now show.

Suppose $\eta\in\WF(R_Th_0)\cap\big((T^*\bTheta)_T\setminus \clsr{T^*(\bTheta_T)}\big)$, and $\gamma$ is the bicharacteristic passing through $\eta$ at $t=T$. If there were a finite energy solution $h_\infty\in\mathbf C$ of the scattering control equation~\eqref{e:maf}, the proof of Theorem~\ref{t:basic-maf} implies (via unique continuation) that the wave field $v(t,x)=(F\clsr\pi Rh_\infty)(2T-t,x)$ is stationary harmonic at $t=T$ on $\bTheta_T^\star$, and in particular smooth at $\eta$. Propagation of singularities makes this impossible, since $\gamma([0,2T])$ lies completely inside $\Theta$. Hence no $h_\infty\in\mathbf C$ exists, and the Neumann series for $h_0$ must diverge, implying that $\norm{\pi^\star R\pi^\star R}=1$.

Using this argument, a divergent Neumann series may be constructed whenever $(T^*\bTheta)_T \neq T^*(\bTheta_T)$. Hence we expect $\norm{\pi^\star R\pi^\star R}=1$ in general for $n>1$ dimensions, in opposition to Proposition~\ref{p:1D-convergence} in 1D. It is worth noting that in numerical tests the Neumann iteration appears to follow its microlocally predicted behavior (isolation of $h\MDT$) more closely than its exact behavior (isolation of $h\DT$).

\subsection{Proof of convergence in one dimension}					\label{s:compare-proofs}

\begin{proof}[Proof of Proposition~\ref{p:1D-convergence}]
	This proof is inspired in large part by a proof of Stefanov and Uhlmann~\cite[Prop. 5.1]{SU-TATBrain}.
	Let $x(t)$ be the inverse function of the travel time $t=\int_0^x c(x')^{-1}\,\ud x'=d^*_\Theta(x)$; then $\Theta_t=(x(t),\infty)$. Choose $\delta>0$ small enough that $\abs{t_1-t_2}>\delta/2$ for any distinct $x(t_1),x(t_2)\in\singsupp c$. 
		
	In $(-\infty,\eps)$ take the factorization $\d_t^2-\Delta=(\d_t+i\d_x)(\d_t-i\d_x)$ associated with d'Alembert solutions $u(t,x)=f(x-t)+g(x+t)$. Identifying $h_0$ with $(f,g)\in H^1\times H^1$,
	\begin{equation}
		\dnorm{h_0}^2 = \int_0^\eps c^{-2}\dabs{g'-f'}^2 + \dabs{f'+g'}^2\,dx
				 = 2\big(\dnorm{f'}_{L^2}^2+\dnorm{g'}_{L^2}^2\big).
	\end{equation}
	The leftward-traveling component $g$ is trivially handled, since it is preserved by $R\pi^\star R$: indeed, if $f= 0$, then $\supp Rh_0\subset (-2T,-2T+\eps)$, and $\pi^\star R\pi^\star Rh_0=\pi^\star R^2h_0=0$. Hence we restrict attention to rightward-traveling initial data $h_0=(f,0)$.
	
	Intuitively, the energy of the direct transmission of $f$, that is, its image under the graph FIO components of $R$ involving only transmissions, should be bounded away from zero by \Garding's inequality since these components are elliptic.
	
	To start, assume $\supp h_0$ is contained in an interval $(a,b)$ of width $b-a\leq\delta$, so that no multiply-reflected rays enter the direct transmission region $I=(x(a+2T),x(b+2T))$. Furthermore, assume $c$ is constant on $I$, so that $Rh_0$ again divides into leftward- and rightward-travelling components $F,G$. 
	
	On $I$ we have $Rh_0 \eqml (R\DT^++R\DT^-) h_0$, where $R\DT^\pm$ are elliptic graph FIO (one for each family of bicharacteristics) associated with propagation along purely transmitted broken bicharacteristics; see Appendix~\ref{s:parametrix-construction}. Let $\pi_\pm=\fiv 2(I\pm iH)$ be the projections onto positive and negative frequencies (where $H$ is the Hilbert transform), and define the elliptic FIO $R\DT=R\DT^+\pi_+ + R\DT^-\pi_-$. Now on $I$ we have $F'\eqml \psi\d_x R\DT \d_x^{-1}f'$. Applying \Garding's inequality to the normal operator of $\d_x R\DT\d_x^{-1}$, with an appropriate spatial cutoff,
	\begin{nalign}
		\norm{h_0} = \sqrt 2\dnorm{f'}_{L^2} &\leq C_1\sqrt 2\dnorm{F'}_{L^2(I)} + \dnorm {Kf'}_{L^2}\\
				& = C_1\left(\En_{I}( Rh_0)\right)^{1/2} + \tnorm {\tilde K h_0}\\
				& \leq C_1\norm{\clsr\pi Rh_0} + \tnorm {\tilde K h_0},
		\label{e:1D-garding}
	\end{nalign}
	where $K,\tilde K$ are compact operators. In fact, $h_0=(f,0)\perp\ker \clsr\pi R$, so the compact error term $\norm{\tilde Kh_0}$ may be eliminated. To see this, by unique continuation $h_1=(f_1,g_1)\in\ker\clsr\pi R$ implies $Fh_1=0$ along $\RR\times\bdy\Omega$ and $[\eps,2T]\times\bdy\Theta$. Since $Fh_1=f_1(x-t)+g_1(x+t)$ outside $\Omega$, we conclude $f_1=0$. Conversely, $\clsr\pi(0,g_1)=0$ so that $\ker\clsr\pi R=\{(0,g_1)\}\perp h_0$.
	
	Hence on the subspace $g=0$, for some constant $C_2>0$,
	\begin{equation}
		\norm{\pi^\star R\pi^\star R} \leq \norm{\pi^\star R} \leq 1 - \frac 1{C_2}.
	\end{equation}
	and as $\pi^\star R\pi^\star R(f,g)=\pi^\star R\pi^\star R(f,0)$ this proves the result for all $h_0$.
	
	The same is true even if $c$ is not constant on $I$, since without affecting $\pi^\star R\pi^\star R$ we may modify $c$ so as to be constant on some deeper interval $(x(2T'),\infty)$, $T'>T+\epsilon/2$, and deduce an estimate analogous to~\eqref{e:1D-garding}, but at the later time $t=2T'$. By finite speed of propagation and conservation of energy, we can move the estimate back to $t=2T$ to establish~\eqref{e:1D-garding}.
	
	Finally, if $\eps>\delta$, it is possible that the direct transmission of a shallower part of $h_0$ may be cancelled by that of a deeper part of $h_0$, derailing the \Garding{} estimate. However, if this occurs the shallower and deeper parts of $h_0$ must be related by an elliptic FIO; therefore, the shallower part's energy is controlled by the deeper part's direct transmission.
	
	To make a simpler version of this idea rigorous, cover $(-2T,\eps)$ with intervals of width $\delta$:
	\begin{align}
		I_j&=((j-1)\delta,j\delta), 
		&
		j&=\lfloor -2T/\delta\rfloor,\dotsc,\lceil \eps/\delta\rceil=k.
	\end{align}
	Choose $f_j\in H^1\Loc$ with $f_j'=\mathbf 1_{I_j}f'$, where $\mathbf 1_{I_j}$ denotes the characteristic function. For each $j$, we have an estimate of the form~\eqref{e:1D-garding} with $h_0=(0,f_j)$. Let $E_j=\sqrt 2\norm{f_j'}_{L^2}$ be the energy of $f_j$.
	Now, let $j_0$ be the smallest $j$ for which $E_j\geq 2C_2^{-1}\sum_{i>j}E_i$; this is true of $j=k$ so such a $j_0$ always exists. By finite speed of propagation, the energy of $Rh_0$ in $I'' = (x(2T+(j_0-1)\delta), x(2T+j_0\delta))$ depends only on $f_i$ with $i\geq j_0$. But the direct transmission of $f_{j_0}$ contributes at least energy $2\sum_{i>j_0} E_i$, so by conservation of energy and \Garding's inequality
	\begin{equation}
		\big\lVert{f_{j_0}'}\big\rVert_{L^2} \lesssim \En_{I''}(Rh_0)+\tnorm{\tilde K h_0}.
	\end{equation}
	However, we may bound all of $f'$ in terms of $f_{j_0}'$. For, if $j>j_0$ certainly $\tnorm{f_j'}\lesssim \tnorm{f_{j_0}'}$; for $j<j_0$, this is also true as $E_j\not\geq 2C_2^{-1}E_{j_0}$. Hence
	\begin{equation}
		\dnorm{f'}_{L^2} < C_3\En_{I''}(Rh_0)+\tnorm{\tilde K h_0},
	\end{equation}
	with a constant $C_3=C_3(C_2,\eps,\delta,T)$. The remainder of the proof follows as before.
\end{proof}

\section{Connecting scattering control to the Marchenko equation}
\label{s:marchenko}

In this section, we illustrate the connection between Marchenko's integral equation and scattering control by first generalizing Rose's focusing algorithm~\cite{Rose02} to higher dimensions. This will show how one can eliminate multiple scattering in higher dimensions to eventually obtain a focused wave. We will start by summarizing Rose's approach in one space dimension to eliminate multiple scattering and obtain a focused wave. We will then explain the drawbacks to his approach, and provide our results that generalize his one-sided autofocusing results to higher dimensions. In addition, the one dimensional case will provide an accurate illustration of the microlocal solution $A$ constructed in Proposition~\ref{p:constructive-parametrix}. This will provide a clear distinction between the scattering control process and Rose's focusing algorithm where the advantages of scattering control are readily apparent. Lastly, we will connect our results with the 1D Marchenko equation used to solve the inverse scattering problem.
\subsection{Rose's one-sided autofocusing} \label{sec Rose autofocusing}
In \cite{Rose02}, Rose tries to focus an acoustic wave (working in $\RR_t \times \RR_x$) inside a medium occupying $\{ x > 0 \}.$ On the left side, $\{ x < 0 \}$, the wave speed is known, say $1$ for simplicity. Inside $x<0$, the total wave field $u$ may directly be decomposed into its incoming and outgoing components:
$$ u(x,t) = u_{\text{in}}(x,t) + u_{\text{out}}(x,t).$$
One is given the reflection response operator that we denote $\R(t)$ which relates the incoming and outgoing waves at the boundary $\{x=0\}$. By linearity, one has exactly
$$ u_{\text{out}}(x=0,t) = \int \R(t-t')\,u_{\text{in}}(0,t')\,\ud t'.$$
The goal of Rose is to determine a boundary control $u_{\text{in}}(x=0,t)$ such that the total wave field $u$ will be a distribution with support equal to $\{ x = x_f\}$ at time $t=0$ for some focusing point $x_f > 0$ one is interested in. Letting $t_f$ denote the focusing time, i.e.~$t_f = d_c(0,x_f)$, Rose uses the ansatz $u_{\text{in}}(x=0,t) = \delta(t+t_f) + \Omega\tail(t;t_f)$, and then finds an equation that $\Omega\tail$ must solve in order to obtain focusing.

Rose shows that $\Omega\tail$ must solve (see \cite[Equation (8)]{Rose02})
\beq{ \label{eq: Rose equation}
\Omega\tail(-t;t_f) + \R(\Omega\tail(-t;t_f)) = -\R(\delta(-t+t_f)) \text{ for }t<t_f,
}\eeq
where the action of $\R$ applied to a test function $\phi$ is
\beq{
\R\phi = \int_{-\infty}^{\infty}\R(t+t')\phi(t')\,\ud t'.
}\eeq
Equation (\ref{eq: Rose equation}) for $\Omega\tail(-t;t_f)$ is the Marchenko equation encountered in 1D potential scattering, which we will describe in more detail later. Also, if one denotes $r_0 = \delta(t-t_f)$ and $\tilde{K}\tail = \Omega\tail(-t;t_f)$,
then this equation reads
$$ \tilde{K}\tail + \R \tilde{K}\tail = -\R r_0 \text{ for } t< t_f,$$

Note that this approach relies heavily on the directional decomposition of a wave field into incoming and outgoing waves. In higher dimensions, such a decomposition may only be done microlocally, and as such, the reflection response operator $R_{\text{Rose}}$ would only be defined microlocally (see \cite{Stolk04} for a detailed account on doing this direction decomposition). The seismic literature has avoided this issue by ignoring the presence of evanescent and glancing waves, so a rigorous mathematical proof to obtain exact focusing in the presence of conormal singularities in higher dimensions has never been done. The whole point of using Cauchy data rather than boundary data is to avoid such microlocal considerations and obtain an iteration method in an exact sense.

Thus, based on the above equations, if we wanted to generalize this to higher dimensions in an exact sense using our Cauchy data setup, one may naively guess that the appropriate equation should be
\[ K\tail + \pi^{\star}R K\tail = -\pi^{\star}Rr_0\]
for $r_0, K\tail \in \mathbf{C}$, with $r_0$ having support in $\Theta$ and $K\tail$ having support outside $\Theta.$ Notice that no directional wave decomposition is necessary to write down this equation. This in fact turns out to be the correct equation, and we provide a rigorous analysis in the next section.

\subsection{Elimination of multiple scattering via a generalized Marchenko equation using Cauchy data}

We prove here a generalization to arbitrary dimension of Rose's equation (\ref{eq: Rose equation}) that allows one to eliminate multiple scattering of the pressure wave field. This is the key step that will allow one to focus a pressure field or velocity field at a given time. However, to avoid difficult microlocal issues with directional wave decompositions, we prove a theorem using Cauchy data rather than boundary data. Afterwards, we relate how this connects to Rose's algorithm for focusing discussed in the previous section as well as the classical Marchenko equation, which use boundary control rather than Cauchy data.

We now state the following general theorem about eliminating multiple scattering above a certain depth level $T$ (given in travel time coordinates) inside the medium, i.e.~within $\Theta^\star_T$.

\begin{theorem} \label{thm: focusing}
 Let $u$ be the solution to the wave equation with Cauchy data $r_{\infty} = r_0 + K\tail \in \mathbf{C}$, where $r_0$ has support in $\Theta$, and $K\tail$ has support outside $\Theta$. Let $T>0$.
\begin{enumerate}
\item[(i)](Necessity)
If $u(T)$ has support in $\Theta_T$, then necessarily $K\tail$ satisfies the following equation
\beq{ \label{eq: generalized Rose equation}
K\tail + \pi^\star RK\tail = -\pi^\star Rr_0
}\eeq

\item[(ii)](Partial converse) Suppose $K\tail$ satisfies
$$ K\tail + \pi^\star RK\tail = -\pi^\star Rr_0.$$
Then $\Pi^\star_T u(T) = 0$ and $u(T)\restrictto{\Theta_T} = R_Tr_0 \restrictto{\Theta_T}.$

\item[(iii)](Uniqueness of the tail) Any two tails may only differ by Cauchy data that is totally internally reflected, and does not penetrate $\Theta$ in time $2T$. That is, if
    $K\tail + \pi^\star RK\tail = 0,$
    then $K\tail =0$ in $\mathbf{C}$.
\item[(iv)](Almost Solvability) The set of $r_0 \in \mathbf H$ for which one has a convergent Neumann series solution for $K\tail$,
    $$ \mathfrak{Q} := \{ r_0 \in \mathbf H : (I+\pi^\star R)^{-1}r_0 \in \mathbf C \}$$
    is dense in $\mathbf H$.
\end{enumerate}
\end{theorem}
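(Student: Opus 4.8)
The plan is to organize the proof around the algebraic identity $I-\pi^\star R\pi^\star R=(I+\pi^\star R)(I-\pi^\star R)=(I-\pi^\star R)(I+\pi^\star R)$ (the factors commute, being polynomials in the single operator $\pi^\star R$), which ties the generalized Marchenko equation~\eqref{eq: generalized Rose equation} to the scattering control equation~\eqref{e:maf} of Section~\ref{s:exact}. Writing $r_\infty=r_0+K\tail$ and applying $\clsr\pi$ and $\pi^\star$ to~\eqref{eq: generalized Rose equation}, while using $\clsr\pi K\tail=0$ (since $K\tail\in\mathbf H^\star$) and $\pi^\star r_0=0$ (since $r_0\in\mathbf H\perp\mathbf H^\star$), one sees that~\eqref{eq: generalized Rose equation} is equivalent to the pair: $\clsr\pi r_\infty=r_0$, which holds automatically from the supports of $r_0,K\tail$; and the focusing relation $\pi^\star Rr_\infty=-K\tail$, i.e. the $\mathbf H^\star$-component of the wave field of $r_\infty$ at time $2T$ is, up to time reversal, minus the injected tail.

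Parts (i) and (ii) then follow from finite speed of propagation together with the unique-continuation Lemma~\ref{l:uc-top-and-bottom}, run essentially as in the proof of Theorem~\ref{t:basic-maf}. Reading the support hypothesis of (i) in the natural ``up to harmonic extension'' sense (i.e. $\Pi^\star_T u(T)=0$, matching the conclusion of (ii); a literal hard cutoff would have infinite energy, the obstruction noted in Section~\ref{s:exact-maf}), one applies Lemma~\ref{l:uc-top-and-bottom} to a mollification of $\d_t(Fr_\infty)$ to get that $u$ is stationary harmonic outside $\Theta$ at $t=0,2T$; this is exactly $\pi^\star Rr_\infty=-K\tail$, hence~\eqref{eq: generalized Rose equation}. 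Conversely, from~\eqref{eq: generalized Rose equation} the same lemma forces $u$ stationary harmonic outside $\Theta_T$ at $t=T$ (so $\Pi^\star_T u(T)=0$), and two applications of finite speed of propagation, as in~\eqref{e:double-FSP}, identify $u(T)$ with $R_Tr_0$ on $\Theta_T$. For (iii): if $K\tail\in\mathbf C$ satisfies $(I+\pi^\star R)K\tail=0$, the chain $\norm{K\tail}=\norm{\pi^\star RK\tail}\le\norm{RK\tail}=\norm{K\tail}$ forces equalities, so no energy is lost under $\pi^\star$ and $\clsr\pi RK\tail=0$; hence $K\tail\in\mathbf G$, and $\mathbf G\cap\mathbf C=\{0\}$ gives $K\tail=0$ (equivalently, $(I-\pi^\star R\pi^\star R)K\tail=0$ and one invokes the uniqueness part of Theorem~\ref{t:basic-maf}).

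The substance is (iv), which I would obtain by reducing to the density of the set $\mathcal Q$ established in Theorem~\ref{t:limit-maf}, rather than rerunning a spectral analysis. Let $h_m=\sum_{i=0}^m(\pi^\star R\pi^\star R)^ir_0$ and $\kappa_N=\sum_{i=0}^N(-\pi^\star R)^ir_0$ be, respectively, the scattering control and Marchenko partial sums. Grouping $\kappa_N$ by the parity of $i$ and using that $I-\pi^\star R$ commutes with $(\pi^\star R)^2$ yields $\kappa_{2m+1}=(I-\pi^\star R)h_m$ and $\kappa_{2m}=(I-\pi^\star R)h_m+\pi^\star R(h_m-h_{m-1})$. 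Hence, for every $r_0\in\mathcal Q$ — that is, whenever $h_m\to h_\infty$ in $\mathbf C$ — one has $h_m-h_{m-1}\to0$, so both parity subsequences converge and $\kappa_N\to(I-\pi^\star R)h_\infty=:r_\infty\in\mathbf C$; moreover $(I+\pi^\star R)r_\infty=(I+\pi^\star R)(I-\pi^\star R)h_\infty=(I-\pi^\star R\pi^\star R)h_\infty=r_0$, so $r_0\in\mathfrak Q$. Thus $\mathcal Q\subseteq\mathfrak Q\subseteq\mathbf H$, and since $\mathcal Q$ is dense in $\mathbf H$ by Theorem~\ref{t:limit-maf}, so is $\mathfrak Q$.

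The main obstacle I anticipate is not (iv) — which is short once the parity identities are written down, the only checks being that $I-\pi^\star R$ commutes with $(\pi^\star R)^2$ and that $(I-\pi^\star R)h_\infty$ indeed solves~\eqref{eq: generalized Rose equation} — but rather pinning down the correct formulation of ``$u(T)$ has support in $\Theta_T$'' in (i), and carrying the finite-speed-of-propagation / unique-continuation bookkeeping of (i)–(ii) through with the same care as in Section~\ref{s:exact-proofs} (in particular, mollifying $\d_t u$ before invoking Lemma~\ref{l:uc-top-and-bottom}, and tracking the harmonic-extension content built into $\clsr\pi_T$).
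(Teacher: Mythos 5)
Your reduction of part (iv) to the density of $\mathcal Q$ via the parity decomposition $\kappa_{2m+1}=(I-\pi^\star R)h_m$, $\kappa_{2m}=(I-\pi^\star R)h_m+\pi^\star R(h_m-h_{m-1})$ is correct and is a genuinely different, cleaner route than the paper, which simply reruns the spectral argument of Theorem~\ref{t:limit-maf} verbatim. It isolates the algebraic content of the factorization $I-\pi^\star R\pi^\star R=(I+\pi^\star R)(I-\pi^\star R)$ and buys a short proof with no new analysis; the paper's approach is more self-contained but redundant. Your treatment of (iii) (both the direct unitarity chain and the alternative reduction to the kernel of $I-\pi^\star R\pi^\star R$) also matches the paper.

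There are, however, two genuine gaps in your account of (i) and (ii). For (i), you propose applying Lemma~\ref{l:uc-top-and-bottom} to a mollification of $\d_t(Fr_\infty)$ ``to get that $u$ is stationary harmonic outside $\Theta$ at $t=0,2T$,'' but that lemma runs in the opposite direction: its \emph{hypotheses} are vanishing Cauchy data on $\Theta^\star$ at $t=0,2T$, and its \emph{conclusion} is vanishing on a space--time diamond including $\Theta_T^\star\times\{T\}$. You cannot use it to push information from $t=T$ back to the endpoints. The correct tool here is finite speed of propagation applied to the time-symmetrized field $w(t)=u(T+t)+u(T-t)$: $w_t(0)=0$ everywhere and $w(0)$ vanishes (or, under your weakened hypothesis $\Pi_T^\star u(T)=0$, is harmonic) on $\Theta_T^\star$, and then FSP applied to $w$ and $w_t$ propagates stationary harmonicity of $\mathbf w(T)$ to $\Theta^\star$, which unwraps to $\pi^\star(r_\infty+Rr_\infty)=0$, i.e.\ equation~\eqref{eq: generalized Rose equation}. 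Also, your parenthetical that ``a literal hard cutoff would have infinite energy'' misidentifies the situation: it is the sharp truncation of a generic wave field that has infinite energy, but there are plenty of nonzero wave fields $u$ with $u(T)$ literally supported in $\Theta_T$, and the paper's hypothesis means exactly that.

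For (ii), your claim that Lemma~\ref{l:uc-top-and-bottom} ``forces $u$ stationary harmonic outside $\Theta_T$ at $t=T$'' is an overstatement that is in general false; the conclusion of the theorem is only $\Pi_T^\star u(T)=0$, i.e.\ harmonicity of the \emph{pressure} field, and the paper's remarks immediately after the theorem stress that the velocity field $u_t(T)$ is not controlled. The reason you cannot get more is visible in the mechanics: you must apply the lemma to $\d_t w$ with $w(t)=u(t)+u(2T-t)$ (it is $\mathbf w$, not $\mathbf u$, that has stationary-harmonic Cauchy data on $\Theta^\star$ at $t=0,2T$ from~\eqref{eq: generalized Rose equation}), and then $\d_t\mathbf w(T)=0$ on $\Theta_T^\star$ reads $(\d_tw(T),\d_t^2w(T))=(0,2c^2\Delta u(T))=0$ -- the first component is trivially zero, and the second gives only $\Delta u(T)=0$. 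Nothing constrains $u_t(T)$. This is precisely the gap between Theorem~\ref{thm: focusing} and the full scattering-control Theorem~\ref{t:basic-maf}, where the stronger equation $(I-\pi^\star R\pi^\star R)h_\infty=h_0$ lets one apply the lemma to $\d_t v$ for the unsymmetrized field $v$ and does control the velocity. You should be careful not to claim that control here, since it is the whole point of the section that Marchenko/Rose-type control is weaker than scattering control.
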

(Note that $\Pi_T^\star$ denotes the orthogonal projection from $H^1(\Theta_T^*)$ onto $H_0^1(\Theta_T^*)$.)

\begin{rem}\label{rem: control of multiple scattering}
The main content of this theorem is that once $r_0$ is given, then one has a formula to construct $K\tail$ that controls the multiple scattering inside $\Theta^\star_T$ at time $T$. The construction of $K\tail$ gives \emph{no information} on what happens inside $\Theta_T$ at time $T$ since $K\tail$ does not affect this region. What happens inside $\Theta_T$ is entirely determined by $r_0$. Thus, for the purposes of focusing, one needs to construct $r_0$ beforehand such that the associated pressure field restricted to $\Theta_T$ at time $T$ will have a singular support at a single point. In Wapenaar et al.~\cite{Wap}, the authors assume they have an approximate velocity profile to construct an approximation to the direct transmission (denoted $\mathcal{T}^{\text{inv}}_d$ in equation (16) there), which is analogous to the $r_0$ we have here. They then construct a tail (denoted by $M$) analogous to our $K\tail$ to control the multiple scattering.
\end{rem}

\begin{rem}
Notice that this theorem never mentions a focusing point but rather an inside region $\Theta_T$. This is because in order to make the theorem more general, we did not specify any support conditions for $r_0$. Typically however, one sends an incident pulse $r_0$ that is supported close to but outside $\Omega$, which is meant to be the direct transmission. Then the domain of influence of $r_0$ inside $\Theta_T$ at time $T$ is only a small region in a neighborhood of $\partial \Theta_T$ containing the desired point of focus (see Figure \ref{f:adt}). We relate the above theorem to focusing via a corollary at the end of this section.
\end{rem}

\begin{rem}
As mentioned in \cite{Rose02} as well, this result only describes how to control multiple scattering of the pressure field, but says nothing about the velocity field at time $T$; hence energy is not controlled and the wave field may still have a large kinetic energy even at time $T$. Also, after the time $t=T$, the Cauchy data inside $\Theta_T^\star$ generate waves that may and generally do enter the inner layer $\Theta_T$ even before time $t=2T$. The main advantage of scattering control is that it controls both the pressure \textit{and} velocity field so that for $T \leq t \leq 2T$, the wave generated by the time $T$ Cauchy data inside $\Theta^*_T$ will not penetrate the domain of influence of the direct transmission $\bar{\pi}_TR_Tr_{0}$.
\end{rem}

\begin{proof}
We start with (i).
Suppose we found a wave field $u$ such that $u(T)$ has support in $\Theta_T$, and Cauchy data $r_{\infty} = r_0 + K\tail$ as in the statement of the theorem. Let us denote
$$ w(t) = u(T+t) + u(T-t).$$
Observe that
$$ w(0) =0 \text{ outside }\Theta_T, \qquad \text{and} \qquad
w_t(0) = 0.
$$
By finite propagation speed, one also has $w(t,x) = 0$ when $d(x,\Theta_T) > t$. Notice that all points in $\Theta^\star$ are at least distance $T$ away from $\Theta_T$ so one has
$$ \pi^\star \mathbf{w}(T) = 0$$
This precisely means that
$$ u(2T) = - u(0) \text{ on }\Theta^\star$$
and
$$ - u_t(2T) = -u_t(0) \text{ on } \Theta^\star.$$

Written in operator form, this amounts to
$$ \pi^\star\nu \circ R_{2T}r_{\infty} = -\pi^\star r_{\infty},$$
where we recall that $R_s$ does not just propagate $s$ units of time, but also give the Cauchy data at time $t=s$.
Plugging in $r_{\infty} = r_0 + K\tail$ above gives
\begin{align}\label{eq: Marchenko for the tail}
 &\pi^\star R(r_0 + K\tail) = -\pi^\star (r_0 + K\tail)
\nonumber \\
&\Leftrightarrow \pi^\star R r_0 + \pi^\star RK\tail = -\pi^\star r_0 -\pi^\star K\tail = -K\tail
\nonumber \\
 &\Leftrightarrow K\tail + \pi^\star RK\tail = -\pi^\star Rr_0.
\end{align}

\paragraph{Proof of (ii)}
First, if one adds $r_0$ to both side of (\ref{eq: Marchenko for the tail}), and brings $-\pi^*R r_0$ to the the left hand side, one obtains
\beq{ \label{eq: Marchenko for the full Cauchy}
(I + \pi^*R)r_{\infty} = r_0.
}\eeq
Again denote $u(t) = (Fr_{\infty})(t),$ and let $w(t)$ be a superposition of $u(t)$ and its time reversal; that is
$$ w(t) = (Fr_{\infty})(t) + (Fr_{\infty})(2T-t).$$
Then using (\ref{eq: Marchenko for the full Cauchy}) and recalling that $r_0$ vanishes outside of $\Theta$, we have
$$ \mathbf{w}(0) = r_{\infty} + R r_{\infty}  \text{ is harmonic in }\Theta^\star.$$
Similarly,
$$ \mathbf{w}(2T) = R_{2T}r_{\infty} + \nu \circ r_{\infty} = \nu \circ ( R r_{\infty} +  r_{\infty}) \text{ is harmonic in }\Theta^\star.$$
Note that $w_t(2T) = 0 = w_t(0)$ in $\Theta^\star$.
Since $w$ also solves that wave equation, then $\p^2_tw$ vanishes wherever $w$ is harmonic. By translation invariance of the wave operator, $\p_t w$ (the mollification argument to make this precise is exactly as in the proof of (\ref{e:basic-maf-behavior})) also solves the wave equation while also having Cauchy data at times $t=0$ and $t=2T$ vanishing in $\Theta^*$.
By Lemma 3, $\p_t\mathbf{ w}(T) = 0$ inside $\Theta^\star_T$. Looking at just the first component of $\mathbf{w}(T)$ this says exactly that $u(T)$ is harmonic in $\Theta^\star_T$, which is equivalent to $\Pi_T^\star u(T)=0$. The second statement in the theorem follows from finite propagation speed, as $K\tail$ is supported in $\Theta^\star$.

\paragraph{Proof of (iii)}
Suppose that $K\tail + \pi^\star RK\tail = 0$. Since $\pi^\star$ is a projection and $R$ is unitary, one has
$$ \norm{\pi^\star RK\tail} \leq \norm{K\tail}.$$
However, since $K\tail = - \pi^\star RK\tail$, then the inequality above must in fact be an equality and so $\norm{\pi^\star RK\tail} = \norm{K\tail}$. Since $R$ is unitary, one has
$$\norm{K\tail}^2 = \norm{RK\tail}^2 = \norm{\pi^*RK\tail}^2 + \norm{\bar{\pi}RK\tail}^2 = \norm{K\tail}^2 + \norm{\bar{\pi}RK\tail}^2.$$
Thus, $\bar{\pi}RK\tail = 0$ and so
$ K\tail = -\pi^\star RK\tail = -RK\tail$, implying that $K\tail \in \mathbf{G}.$

\paragraph{Proof of (iv)}
Denote $K_l = \sum_{j=0}^l (-\pi^\star R)^j(-\pi^\star R r_0).$
The proof follows almost verbatim as the proof showing the density of the set $\mathcal Q$ defined in (\ref{e:Q-definition}).
\end{proof}

In order to make Remark \ref{rem: control of multiple scattering} more transparent on how this theorem relates to focusing, we add the following corollary. First, we conjecture that following the methods of boundary control in \cite{DKO}, one may extract certain travel times between points on the boundary to points in the interior and use that to create an $r_0$ supported outside $\Omega$, such that at a time $T$, the first component of $R_T(r_0)\restrictto{\Omega_T}$ has singular support equal to a single point. Thus we believe that it will be possible to satisfy the assumption in the following corollary using boundary control methods.

\begin{cor}
Suppose $r_0 \in \mathbf{C}$, a time $t=T$, and $\Theta\supset \Omega$ are such that $\supp(r_0) \subset \Theta$ and the singular support of $F(r_0)(T)\restrictto{\Theta_T}$ is nontrivial, contained inside $B_{\epsilon}(x_f)$ for some small $\epsilon>0$. Then if $K\tail$ solves (\ref{eq: Marchenko for the tail}), then the singular support of $u(T)$ is nontrivial and contained in $B_{\epsilon}(x_f)$.
\end{cor}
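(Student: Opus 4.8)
The plan is to deduce the corollary directly from part~(ii) of Theorem~\ref{thm: focusing}; the only genuine work is bookkeeping on singular supports, together with isolating one subtlety at $\bdy\Theta_T$.

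Write $u = F r_\infty$ with $r_\infty = r_0 + K\tail$. Since $K\tail$ solves~\eqref{eq: Marchenko for the tail}, Theorem~\ref{thm: focusing}(ii) applies and gives
\[
	\Pi^\star_T u(T) = 0
	\qquad\text{and}\qquad
	u(T)\restrictto{\Theta_T} = R_T r_0\restrictto{\Theta_T} = F(r_0)(T)\restrictto{\Theta_T}.
\]
The second identity says $u(T)$ coincides with $F(r_0)(T)$ on the open set $\Theta_T$, so $\singsupp u(T)\cap\Theta_T = \singsupp\big(F(r_0)(T)\restrictto{\Theta_T}\big)$; by hypothesis this set is nonempty and contained in $B_\epsilon(x_f)$. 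Nonemptiness already shows that $\singsupp u(T)$ is nontrivial, and it also controls $\singsupp u(T)$ throughout $\Theta_T$.

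Next I would dispose of the exterior region $\Theta_T^\star$. The relation $\Pi^\star_T u(T) = 0$ says exactly that $u(T)\restrictto{\Theta_T^\star}$ is orthogonal, in the (Dirichlet) energy inner product, to $H^1_0(\Theta_T^\star)$, i.e.\ that $u(T)$ is weakly harmonic on the open set $\Theta_T^\star$, hence $C^\infty$ (indeed real-analytic) there by elliptic regularity. Thus $\singsupp u(T)\cap\Theta_T^\star = \emptyset$, and combining with the previous step, $\singsupp u(T)\subseteq B_\epsilon(x_f)\cup\bdy\Theta_T$. Equivalently, by Lemma~\ref{l:characterization-of-It}, $u(T)$ is the first component of $\clsr\pi_T R_T r_0 = h\DT(r_0,T)$: it equals $F(r_0)(T)$ inside $\Theta_T$ and is the harmonic extension of its trace outside.

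The single point requiring care --- and what I expect to be the main obstacle to the conclusion exactly as stated --- is the hypersurface $\bdy\Theta_T$. As a function that is locally in $H^1$, $u(T)$ has matching one-sided traces on $\bdy\Theta_T$, hence is continuous across it; but a harmonic extension in general introduces a jump in the normal derivative, i.e.\ a conormal singularity along $\bdy\Theta_T$, even when $F(r_0)(T)$ is smooth near $\bdy\Theta_T$. Consequently what one proves unconditionally is $\singsupp u(T)\subseteq B_\epsilon(x_f)\cup\bdy\Theta_T$, with the extra piece confined to a fixed hypersurface that carries no focusing information; the literal inclusion $\singsupp u(T)\subseteq B_\epsilon(x_f)$ holds under the natural additional assumption that the focused pulse already reaches $\bdy\Theta_T$ (so that $\bdy\Theta_T\subseteq\overline{B_\epsilon(x_f)}$), or that $r_0$ is arranged so that its time-$T$ pressure field is already stationary harmonic outside $\Theta_T$ (so that no projection, and hence no boundary singularity, is created). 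In any case, the substantive content --- that the tail $K\tail$ annihilates every singularity of the time-$T$ wave field except the prescribed focused one inside $\Theta_T$ --- is immediate from Theorem~\ref{thm: focusing}(ii) as above.
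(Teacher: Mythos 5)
Your argument is the paper's intended one: the paper does not supply a detailed proof of this corollary, treating it as an immediate consequence of Theorem~\ref{thm: focusing}(ii), and your steps --- invoke part~(ii) to get $u(T)\restrictto{\Theta_T}=R_Tr_0\restrictto{\Theta_T}$ and $\Pi^\star_T u(T)=0$, then read off smoothness of $u(T)$ on $\Theta_T$ away from $B_\epsilon(x_f)$ from the hypothesis and on $\Theta_T^\star$ from harmonicity --- are exactly the bookkeeping the authors have in mind (compare the discussion in the preceding remark, ``the construction of $K\tail$ \dots\ controls the multiple scattering inside $\Theta_T^\star$'').

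Your closing observation about $\bdy\Theta_T$ is a genuine point, and you are right to flag it: gluing $F(r_0)(T)$ on $\Theta_T$ to its harmonic extension on $\Theta_T^\star$ produces a function that is continuous (as an $H^1$ function its one-sided traces match) but whose normal derivative generically jumps across $\bdy\Theta_T$, so $u(T)$ carries a conormal singularity there. This means the literal inclusion $\singsupp u(T)\subseteq B_\epsilon(x_f)$ requires either that this boundary is contained in $\overline{B_\epsilon(x_f)}$, or some extra hypothesis that suppresses the normal-derivative mismatch; what one gets unconditionally is $\singsupp u(T)\subseteq B_\epsilon(x_f)\cup\bdy\Theta_T$, with the extra piece carried on a fixed hypersurface independent of $r_0$. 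The paper elides this (and, in the remark following the corollary, explicitly acknowledges that the statement is pitched at the level of energy spaces and would need refinement to capture a point singular support in the distributional sense), so the sharper version you propose is the correct reading. Your identification of $u(T)$ with the first component of $\clsr\pi_T R_T r_0=h\DT(r_0,T)$ via Lemma~\ref{l:characterization-of-It} is also correct and makes the harmonic-extension source of the boundary singularity transparent.
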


The corollary is stated using the energy spaces employed throughout the paper. However, we believe it can be refined to encompass general distributions and in particular a point singular support so that one has a focusing wave in the usual sense.
\begin{rem}
We emphasize again that despite the attractiveness of the corollary, it only gives focusing of the pressure field and says nothing about the velocity field. Thus, once one goes past time $t=T$, one has lost all control and one has no information on the wave field at such times, which is usually quite complex since $K\tail$ needs to be quite complicated in order to control the multiple scattering that allows focusing. Thus, the scattering control procedure is much more useful in this regard.
\end{rem}

We close this section with an analogous theorem to Theorem \ref{thm: focusing} which controls the multiple scattering of the velocity field instead. The proof is almost identical excepting sign changes so we omit it.
\begin{theorem} (Multiple scattering control of velocity field)\label{thm: focusing velocity}
 Let $u$ be the solution to the wave equation with Cauchy data $r_{\infty} = r_0 + K\tail \in \mathbf{C}$, where $r_0$ has support in $\Theta$, and $K\tail$ has support outside $\Theta$. Let $T>0$.
\begin{enumerate}
\item[(i)](Necessity)
If $u_t(T)$ has support in $\Theta_T$, then necessarily $K\tail$ satisfies the following equation
\beq{
K\tail - \pi^\star RK\tail = -\pi^\star Rr_0
\nonumber
}\eeq

\item[(ii)](Partial converse) Suppose $K\tail$ satisfies
$$ K\tail - \pi^\star RK\tail = -\pi^\star Rr_0.$$
Then $u_t(T) \restrictto{\Theta^\star_T}= 0$ and $\mathbf{u}(T) \restrictto{\Theta_T} = R_Tr_0 \restrictto{\Theta_T}.$

\item[(iii)](Uniqueness of the tail) Any two tails may only differ by Cauchy data that is totally internally reflected, and does not penetrate $\Theta$ in time $2T$. That is, if
    $K\tail - \pi^\star RK\tail = 0,$
    then $K\tail =0$ in $\mathbf{C}$.
\item[(iv)](Almost Solvability) The set of $r_0 \in \mathbf H$ for which one has a convergent Neumann series solution for $K\tail$,
    $$ \mathfrak{Q} := \{ r_0 \in \mathbf H : (I-\pi^\star R)^{-1}r_0 \in \mathbf C \}$$
    is dense in $\mathbf H$.
\end{enumerate}
\end{theorem}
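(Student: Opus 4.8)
The plan is to run the proof of Theorem~\ref{thm: focusing} essentially unchanged, the only structural substitution being to replace the superposition symmetric about $t=T$ by the one antisymmetric about $t=T$; morally, this trades control of the trace $u(T)$ for control of $\partial_t u(T)$, at the cost of one sign flip in the resulting equation. For part (i), assume $u_t(T)$ is supported in $\Theta_T$ and put $w(t)=u(T+t)-u(T-t)$, so that $w$ solves the wave equation and $\mathbf w(0)=(0,\,2u_t(T))$ is supported in $\Theta_T$. Finite speed of propagation then forces $w$ and $w_t$ to vanish on $\Theta^\star$ at $t=T$ (every point of $\Theta^\star$ being at distance at least $T$ from $\Theta_T$), i.e.\ $\pi^\star\mathbf w(T)=0$. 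Reading off $\mathbf w(T)=\bigl(u(2T)-u(0),\,u_t(2T)+u_t(0)\bigr)$ gives $u(2T)=u(0)$ and $u_t(2T)=-u_t(0)$ on $\Theta^\star$, hence $R_{2T} r_\infty=\nu r_\infty$ there; applying $\nu$ and recalling $R=\nu R_{2T}$ yields $\pi^\star R r_\infty=\pi^\star r_\infty$ --- the opposite sign to the pressure case's $\pi^\star R r_\infty=-\pi^\star r_\infty$. Substituting $r_\infty=r_0+K\tail$ with $\pi^\star r_0=0$ and $\pi^\star K\tail=K\tail$ then reduces to the stated equation for the tail.

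For part (ii) I would first rearrange the hypothesis into $(I-\pi^\star R)r_\infty=r_0$ and apply $\pi^\star$ (using $\pi^\star r_0=0$) to get $\pi^\star(I-R)r_\infty=0$. Then form the \emph{antisymmetric} superposition $\tilde w(t)=(Fr_\infty)(t)-(Fr_\infty)(2T-t)$, for which $\tilde{\mathbf w}(0)=(I-R)r_\infty$ and $\tilde{\mathbf w}(2T)=-\nu(I-R)r_\infty$; both therefore have vanishing $\pi^\star$-projection, i.e.\ are stationary harmonic on $\Theta^\star$, so $\tilde w_t$ and $\partial_t^2\tilde w$ vanish on $\Theta^\star$ at $t=0,2T$. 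Applying Lemma~\ref{l:uc-top-and-bottom} to $\partial_t\tilde w$ --- made rigorous by the same mollification as in the proof of~\eqref{e:basic-maf-behavior} --- gives $\partial_t\tilde w(T)=0$ on $\Theta_T^\star$, that is $u_t(T)\restrictto{\Theta_T^\star}=0$. The identity $\mathbf u(T)\restrictto{\Theta_T}=R_T r_0\restrictto{\Theta_T}$ then follows from finite speed of propagation, since $K\tail$ is supported in $\Theta^\star$, a distance at least $T$ from $\Theta_T$, so $R_T K\tail$ vanishes there.

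Parts (iii) and (iv) are verbatim translations of their pressure-case counterparts. For (iii), $K\tail=\pi^\star R K\tail$ together with unitarity of $R$ and $\norm{\pi^\star}\le1$ forces the chain $\norm{K\tail}=\norm{\pi^\star R K\tail}\le\norm{R K\tail}=\norm{K\tail}$ to be a chain of equalities, so $\bar\pi R K\tail=0$; then $K\tail=R K\tail$ and, commuting $\bar\pi$ past $\nu$, also $\bar\pi R_{2T}K\tail=0$, whence $K\tail\in\mathbf G\cap\mathbf{H}^\star=\{0\}$. For (iv), rewriting the equation as $(I-\pi^\star R)r_\infty=r_0$ and telescoping (using $\pi^\star r_0=0$) exhibits $K\tail$ as the Neumann series $\sum_{m\ge0}(\pi^\star R\pi^\star)^m(\pi^\star R r_0)$; since $\pi^\star R\pi^\star$ is self-adjoint with norm $\le1$, the density of $\mathfrak Q$ follows from exactly the spectral-truncation argument used to prove $\mathcal Q$ dense in Theorem~\ref{t:limit-maf}, splitting the spectrum of $\pi^\star R\pi^\star$ into dyadic shells bounded away from $\pm1$, on each of which the series converges in $\mathbf C$.

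The argument presents no serious obstacle --- which is why the authors omit it --- but the one place where care is needed is the sign bookkeeping in parts (i)--(ii): one must verify that passing from the symmetric to the antisymmetric superposition flips exactly the relations built out of $u_t$ while preserving those built out of $u$, so that the net effect on the scattering-control equation is precisely $\pi^\star R\mapsto-\pi^\star R$ on the left-hand side, the rest of the structure (and in particular the derivations of (iii)--(iv)) carrying over unchanged.
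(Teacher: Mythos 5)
Your overall strategy — replace the symmetric superposition $u(T+t)+u(T-t)$ by the antisymmetric one $u(T+t)-u(T-t)$ and otherwise mirror the proof of Theorem~\ref{thm: focusing} — is exactly what the paper has in mind; the antisymmetric form trades support control of $u(T)$ for support control of $u_t(T)$, and parts (iii) and (iv) carry over verbatim, as you note. But the place you yourself flagged as needing care, the sign bookkeeping in (i)--(ii), is precisely where the write-up goes wrong, and you did not actually carry it out.

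In part (i), you correctly obtain $\pi^\star R\,r_\infty=\pi^\star r_\infty$. Substituting $r_\infty=r_0+K\tail$ with $\pi^\star r_0=0$ and $\pi^\star K\tail=K\tail$ then yields
\begin{equation*}
\pi^\star R\,r_0+\pi^\star R\,K\tail=K\tail,
\qquad\text{i.e.}\qquad
K\tail-\pi^\star R\,K\tail=+\pi^\star R\,r_0,
\end{equation*}
which is the \emph{opposite} sign from the equation $K\tail-\pi^\star R\,K\tail=-\pi^\star R\,r_0$ asserted in the theorem. Your claim that the substitution ``reduces to the stated equation'' is therefore false. In part (ii) you then ``rearrange the hypothesis into $(I-\pi^\star R)r_\infty=r_0$,'' but this rearrangement is \emph{not} a consequence of the stated hypothesis $K\tail-\pi^\star R\,K\tail=-\pi^\star R\,r_0$ (with that hypothesis one instead gets $(I-\pi^\star R)r_\infty=r_0-2\pi^\star R\,r_0$); it is equivalent to the plus-sign version. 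So your (ii) and (iv) silently operate with the plus-sign equation while your (i) claims to have produced the minus-sign one — an internal inconsistency. What is actually going on is that the statement of Theorem~\ref{thm: focusing velocity}(i)--(ii) appears to carry a sign typo: the correct velocity equation is $K\tail-\pi^\star R\,K\tail=+\pi^\star R\,r_0$, which is the one your argument derives, which is equivalent to $(I-\pi^\star R)r_\infty=r_0$, and which is the only one consistent with the operator $(I-\pi^\star R)^{-1}$ appearing in part (iv). Your closing heuristic that passing from pressure to velocity amounts to ``$\pi^\star R\mapsto-\pi^\star R$ on the left-hand side'' is also not quite right: comparing the two final equations $K\tail+\pi^\star R\,K\tail=-\pi^\star R\,r_0$ and $K\tail-\pi^\star R\,K\tail=+\pi^\star R\,r_0$, the substitution is $R\mapsto-R$ throughout, i.e.\ both occurrences of $\pi^\star R$ flip sign. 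The proof should either derive the theorem's equation exactly, or explicitly identify and correct the sign discrepancy; as written it does neither.
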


\begin{rem}
We note that an almost identical proof used to recover kinetic energy of the almost direct transmission in Proposition \ref{p:basic-energy} and \ref{p:limit-energy} may be used here to recover this energy from $K\tail$ instead.
\end{rem}

At this point, one might be led to believe that information may be lost or gained by using our Cauchy data setup versus the boundary setup that is done in Rose. This is actually not the case, and we show in the next section that in one dimension, where one does not worry about glancing rays, both formulations are completely
equivalent.

\subsection{Equivalence between Cauchy and boundary formulations in one dimension}

For simplicity, we assume here that $\Omega$ occupies $x>0$ and $\Theta$ is exactly the half-space $\{ x>-\epsilon\}$ for some $\epsilon > 0$. Without loss of generality, we assume that the wave speed is constantly equal to $1$ outside $\Omega$, i.e.~$c\restrictto{\Omega^\star} =1$. Then any wave field inside $\Omega^\star$ is of the form
\beq{
u \restrictto{\Omega^\star} = f(t-x) + g(x+t)
}\eeq
We assume that $\supp(f(s))\subset \{ -T < s < T+\epsilon \}$ ($T$ is the focusing time; i.e.~we are focusing at a point $x_T$ which is distance $T$ away from $0$ using the metric determined by $c$) and that the left going wave $g$ is activated only after the right going wave $f$ hits the boundary $\{ x =0 \}.$ Precisely, this means that
$$ \supp(g(s)) \subset \{ s > -T \}.$$
As described in the last section, one has
\beq{\label{eq: CtoB g is related to f}
 g(t) = \R \ast f = \int_{-\infty}^{\infty} \R(t-t')f(t')dt'.
 }\eeq
This is well-defined in an exact sense precisely since there are no glancing rays in 1 space dimension. See for example \cite{AR02} for details.

To avoid dealing with harmonic extensions, as they do not add anything essential, we will assume that $R$ applied to any of our Cauchy data has $0$ trace on $\partial \Theta^*= \{ x=0\}$. This merely ensures that
$$ \pi^*R = \mathbf{1}_{\Theta^*}R = \mathbf{1}_{\{{x<-\epsilon}\}}R$$
when applied to such Cauchy data.

Next, observe that since $g(s) = 0 $ when $ s \leq 0$ and using the support condition of $f$, our Cauchy data (initially given at $t=-T$ as opposed to $t=0$) and its time-$2T$ propagation is
\begin{nalign}
\tilde{\f}(x):=\u(-T) &= \begin{pmatrix}f(-T-x)\\f'(-T-x)\end{pmatrix},\\
\pi^*R(\u(-T)) = \pi^*R\tilde{\f} &= \mathbf{1}_{\{ x< -\epsilon \}}\begin{pmatrix}g(T+x)\\-g'(T+x)\end{pmatrix}.
\end{nalign}
Then by $(\ref{eq: CtoB g is related to f})$ we have
\beq{
(\pi^*R\tilde{\f})(t-T) = \mathbf{1}_{\{{t<T-\epsilon}\}}\nu\g(t) = \mathbf{1}_{\{{t<T-\epsilon}\}}\nu(\R\star \f)(t),
}\eeq
where we get an equation for $g'(t)$ by differentiating (\ref{eq: CtoB g is related to f}), and we use the notation $\f, \g$ to represent a column vector of $f,g$ and their derivative. Let us denote $\JCB$ as the \emph{Cauchy-to-boundary} map, which maps Cauchy data at time $t=-T$ to boundary data on $\{ x =0 \}$. In this simple setting, it is well-defined as a map $\JCB: D'(\RR_x) \to D'(\RR_t)$ explicitly defined on smooth functions as
$$ \JCB v(t) = v(t-T)$$
with an obvious extension to elements in $\mathbf{C}$.
Since $\tilde{\f} = \JCB^{-1}\f(-\cdot)$
and $\R \star \phi(-\cdot) = \R \phi(\cdot)$,
we have a nice relationship between $R_{2T}$ and $\R$ given by
\beq{
\JCB R_{2T}\JCB^{-1}(\f(-\cdot)) = \R(\f(-\cdot)) \qquad \text{for }t<T.
}\eeq
\begin{prop}(Equivalence of Rose and Cauchy-Marchenko in one dimension)
 Let $f(t) =K\tail(t)+r_0(t)$ denote the incoming boundary data, and $\tilde{\f}(x) = \JCB^{-1}(\textbf{K}\tail(t)+\textbf{r}_0(t))
 := \tilde{K}\tail(x) + \tilde{r}_0(x)$ be the corresponding Cauchy data at time $-T$ with all the assumptions described earlier. Then, $\tilde{K}\tail$ satisfies the Cauchy-Marchenko equation with $\tilde{r}_0$ iff $K\tail$ satisfies the Rose equation with $r_0$; that is,
$$ \tilde{K}\tail(x) + \pi^*R\tilde{K}\tail(x) = -\pi^*R\tilde{r}_0$$
$$ \Leftrightarrow $$
$$
K\tail(-t) + \R(K\tail(-\cdot))
 = -\R(r_0(-\cdot)) \text{ for } t<T-\epsilon
 $$
\end{prop}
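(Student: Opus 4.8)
The plan is to prove the equivalence by transporting the Cauchy--Marchenko equation through the Cauchy-to-boundary map $\JCB$, using exactly the two intertwining identities assembled in the paragraphs just preceding the proposition. Before doing so I record one preliminary: thanks to the standing assumption that $R$ applied to our Cauchy data has zero trace on $\partial\Theta^\star=\{x=0\}$, we genuinely have $\pi^\star R=\mathbf 1_{\{x<-\epsilon\}}R$, with no harmonic-extension correction to track. The two facts that then do all the work are, first, that $\JCB$ sends rightward-traveling Cauchy data at $t=-T$ to its boundary trace up to a time reversal, so that $\JCB\tilde K\tail=\mathbf K\tail(-\cdot)$ and $\JCB\tilde r_0=\mathbf r_0(-\cdot)$; and second, that for any such rightward data $\tilde{\mathbf h}$ (with $\supp h$ contained in $(-T,T+\epsilon)$) one has
$$\JCB\!\left(\pi^\star R\,\tilde{\mathbf h}\right)=\mathbf 1_{\{t<T-\epsilon\}}\,\nu\,\R\!\left(\mathbf h(-\cdot)\right),$$
which is precisely the computation carried out in the text for $\tilde{\mathbf f}$: the $\nu$ on the right comes from the factor $\nu$ in $R=\nu\circ R_{2T}$, the convolution kernel of $R_{2T}$ becomes that of $\R$ by the reflection-response relation $g=\R*h$, and the spatial cutoff $\mathbf 1_{\{x<-\epsilon\}}$ hidden in $\pi^\star$ turns, under the substitution $x=t-T$ defining $\JCB$, into the temporal cutoff $\mathbf 1_{\{t<T-\epsilon\}}$. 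That this holds for $\tilde K\tail$ and $\tilde r_0$ separately, and not just for their sum $\tilde{\mathbf f}$, is a consequence of linearity together with the support hypotheses: at $t=T$ the still-incoming part of $h(t-x)$ lives in $\{-\epsilon<x<2T\}$, disjoint from $\Theta^\star$, so in $\{x<-\epsilon\}$ only the reflected part $\R*h$ appears.

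\textbf{Forward direction.} Apply $\JCB$ to $\tilde K\tail+\pi^\star R\tilde K\tail=-\pi^\star R\tilde r_0$. By the two identities above this becomes
$$\mathbf K\tail(-\cdot)+\mathbf 1_{\{t<T-\epsilon\}}\,\nu\,\R\!\left(\mathbf K\tail(-\cdot)\right)=-\,\mathbf 1_{\{t<T-\epsilon\}}\,\nu\,\R\!\left(\mathbf r_0(-\cdot)\right).$$
Restricting to $t<T-\epsilon$ deletes the indicator, and reading off the first component (on which the time-reversal $\nu$ acts trivially) gives exactly Rose's equation $K\tail(-t)+\R(K\tail(-\cdot))=-\R(r_0(-\cdot))$ for $t<T-\epsilon$.

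\textbf{Converse and the main obstacle.} For the reverse implication, observe that all three terms of the Cauchy--Marchenko equation are Cauchy data supported in $\Theta^\star=\{x<-\epsilon\}$ --- $\tilde K\tail$ by hypothesis, the other two because of the projection $\pi^\star$ --- and that each is rightward- or ($\nu$-flipped) leftward-traveling, hence of the form $(\psi,-\partial_x\psi)$ and so determined by its first component with a consistent sign. Under $\JCB$ the region $\{x<-\epsilon\}$ corresponds precisely to $\{t<T-\epsilon\}$, so the pushed-forward equation is a distributional identity supported in $\{t<T-\epsilon\}$ and is therefore equivalent to its restriction there; since $\JCB$ is a bijection and matching first components suffices, Rose's equation on $\{t<T-\epsilon\}$ recovers the Cauchy--Marchenko equation in $\mathbf C$. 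I expect the only real care to be needed in this last step: one must verify the support bookkeeping --- that the hypotheses on $f=K\tail+r_0$ and on the left-going wave $g$ (activated only after the incident pulse reaches $x=0$) really do confine $\pi^\star R\tilde K\tail$ and $\pi^\star R\tilde r_0$ to $\Theta^\star$, so that the restricted equation is equivalent to, and not merely implied by, the full one --- and that the two time reversals (the one built into $\JCB$ and the one relating the operator $\R$ to convolution by the kernel $\R(\cdot)$) are tracked consistently so that the first-component identity is genuinely Rose's equation as stated. Everything else is the bookkeeping already done in the excerpt.
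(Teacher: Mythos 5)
Your proof is correct and takes essentially the same route as the paper: apply $\JCB$, invoke the intertwining identity $\JCB R_{2T}\JCB^{-1}(\f(-\cdot)) = \R(\f(-\cdot))$ for $t<T$, and track the support cutoffs. The only cosmetic difference is that the paper first passes through the combined form $(I+\pi^\star R)\tilde{\f}=\tilde{\mathbf r}_0$, derives the boundary equation for $\f$, and then substitutes $\f=\mathbf{r}_0+\mathbf{K}\tail$; you instead apply $\JCB$ termwise from the start, justified by linearity of $\JCB$ and of the reflection-response relation, which is a legitimate reorganization rather than a different argument. The two points you flag as needing care — that all three terms of the Cauchy–Marchenko equation live in $\Theta^\star$ so that restriction to $\{t<T-\epsilon\}$ loses nothing, and that the second (velocity) component carries no extra information because each term has the rigid $(\psi,-\partial_x\psi)$ form of a unidirectional wave — are precisely the two points the paper also handles (the latter explicitly, by splitting the vector equation into a scalar equation plus its $t$-derivative and noting the derivative equation is redundant).
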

\begin{proof}
 Suppose we start with the Cauchy-Marchenko equation in the form (\ref{eq: Marchenko for the full Cauchy}) (translating everything by time $T$ and using the notation of boldface letters to represent a vector consisting of the funcion and its time derivative):
\begin{align}
 &\u(-T) + \pi^\star R(\u(-T)) = \bar{\pi}\u(-T)
\nonumber \\
\label{eq: CtoB first piece}
 &\Leftrightarrow \tilde{\f}(x) + \pi^\star R\tilde{\f}(x) = \bar{\pi}\tilde{\f}(x)
 \\
 &\Leftrightarrow
 \JCB \tilde{\f} + \JCB \pi^\star R\tilde{\f} = \JCB  \bar{\pi}\tilde{\f}
 \label{eq: CtoB 2nd piece} \\
 &\Leftrightarrow
 \f(-t) + \mathbf{1}_{\{t<T-\epsilon \}}\nu(\R \star \f)(t)
 = \mathbf{r}_0(-t)
\nonumber
\end{align}

This is essentially the right equation for Rose, but we rewrite it in the more familiar form:
\begin{align}
&\f(-t) + \mathbf{1}_{\{t<T-\epsilon \}}\nu(\R \star \f)(t)
 = \mathbf{r}_0(-t)
\nonumber \\
 &\Leftrightarrow
 \mathbf{K}\tail(-t) + \nu(\R \star \mathbf{K}\tail)(t)
 = -\nu(\R \star \mathbf{r}_0)(t) \text{ for } t<T-\epsilon
 \nonumber \\
 &\Leftrightarrow
 \mathbf{K}\tail(-t) + \nu \R(\mathbf{K}\tail(-\cdot))
 = -\nu \R(\mathbf{r}_0(-\cdot)) \text{ for } t<T-\epsilon,
 \nonumber
 \\
 &\Leftrightarrow
 \when{
 K\tail(-t) + \R(K\tail(-\cdot))
 = -\R(r_0(-\cdot))
\nonumber \\
 \smd t\left[
 K\tail(-t) + \R(K\tail(-\cdot))\right]
 = -\smd t\R(r_0(-\cdot))
 }
  \text{ for } t<T-\epsilon
 \nonumber \\
 &\Leftrightarrow
K\tail(-t) + \R(K\tail(-\cdot))
 = -\R(r_0(-\cdot)) \text{ for } t<T-\epsilon
 \nonumber
 \end{align}
 where the first equality is obtained be subtracted $\mathbf{r}_0(-t)$ from both sides of the first equation and writing $f= r_0+K\tail$. \end{proof}

 \begin{rem}
 The above result helps explain the truncation that Rose does in \cite{Rose02} to obtain his autofocusing algorithm.
 The Corollary essentially shows that $K\tail(t)$ must satisfy
 $$ \mathbf{1}_{\{t<T-\epsilon\}}K\tail(-t) + \mathbf{1}_{\{t<T-\epsilon\}}\R(K\tail(-\cdot)) = - \mathbf{1}_{\{t<T-\epsilon\}}\R(r_0(-\cdot))$$
One naturally assumes that the tail come after the direct transmission $r_0$, which means $K\tail(t)$ is supported in $t > -T+\epsilon$ and hence
 $ \mathbf{1}_{\{t<T-\epsilon\}}K\tail(-t) = K\tail(-t).$
 Thus, the Neumann series becomes
  \begin{align*}
  K\tail(-t) &= -\mathbf{1}_{\{t<T-\epsilon\}}\R(r_0(-\cdot))
 + (\mathbf{1}_{\{t<T-\epsilon\}}\R)^2(r_0(-\cdot))
\\
 &\qquad - (\mathbf{1}_{\{t<T-\epsilon\}}\R)^3(r_0(-\cdot))
 + \dots
\end{align*}
 and we may clearly see the truncation happening at each step of the algorithm. The truncation is essential since we just proved the equivalence of Rose's algorithm to our Cauchy scheme, and we already proved that our equation (\ref{eq: Marchenko for the tail}) is necessary and sufficient to control multiple scattering. The proof shows that the truncation essentially comes from (\ref{eq: Marchenko for the tail}) only holding within a certain region in space (i.e.~$\Theta^\star$ in that theorem) that was determined by finite speed of propagation and unique continuation. In one dimension and after using the Cauchy-to-Boundary map, this spatial region corresponds to the time-truncation appearing in Rose.
 \end{rem}
We will describe in the following sections the connection between the equations of the previous theorems, the Marchenko equation, and scattering control.

 \subsection{Connection to the Marchenko equation}
Burridge~\cite{Bur80} considers the 1-dimensional inverse scattering problem for the plasma wave operator $\Box_q = \Box + q(x)$ where $q = 0$ in $x<0$. (recall that in 1 dimension, the acoustic wave equation may be put into this form by a change of variables as in \cite{Bur80}). Since it is not relevant for this part, we will avoid describing the function spaces where all of our distributions here belong. One is interested in solutions to $ \Box_q u = 0$ with certain boundary conditions at $x=0$ that allow for only left-going solutions inside $x<0$ (see \cite[Section 3]{Bur80} for details).
 It is shown in \cite{Bur80} that there is a special Green's function  solution of the form
 $G = \delta(t-x) + K(x,t)$ such that $\supp(K) \subset \{ |t| \leq x, \ x \geq 0 \}$ and one may recover $q$ from knowing $K$.

 The given data are the reflected waves due to a right-going incidence wave in the region $x <0$. Analytically, there is a causal Green's function:
 $$G_1(x,t) = \delta(t-x) + K_1(x,t)$$
 with $\supp(K_1) \subset \{ t \geq |x|, \ t >0\}$. One is given the data $\M(t) = K_1(x=0,t)$ (interpreted as a generalized trace), and the goal is to recover $K$ from $\R$. Then it is shown in \cite[Section 3]{Bur80} that for each fixed $x$, $K$ must satisfy the following integral equation known as the \emph{Marchenko} equation:
 \beq{
  K(x,t) + \int_{-x}^xK(x,\tau) \M(t+ \tau) d\tau =-\M(t+x)  \qquad \text{ for } t<x.
 }\eeq

To relate this to (\ref{eq: generalized Rose equation}), change variables to travel time coordinates
$$ z = \int_0^x c(x')^{-1}\,\ud x'.$$
Comparing with (\ref{eq: Rose equation}), we see that $t_f = z(x_f)$ and $K(z,t) = \Omega\tail(-t;z)$ solves the Marchenko equation above with $\R$ as the given data in place of $\M$. The connection to (\ref{eq: generalized Rose equation}) is now readily apparent from the previous subsections.

\subsection{Connection to scattering control}
Notice that the proof of multiple scattering control in Theorem \ref{thm: focusing} and its corollary essentially utilizes the operators $I+ \pi^\star R$ and $I-\pi^\star R$ to control scattering from the pressure field and the velocity field respectively.
Our scattering control series is a middle ground that allows one to control scattering in both the pressure field and the velocity field such that after time $t=2T$, the exterior data coming from the direct transmission is distinguished. Indeed, the scattering control operator is precisely
$$ I - \pi^*R\pi^*R = (I-\pi^*R)(I+\pi^*R),$$
whose Neumann series solutions involve exactly the even terms in the Neumann series of $I-\pi^\star R$.
Figure~\ref{f:rose-and-sc} depicts the differences between Rose's autofocusing and scattering control in a simple one-dimensional example.

\begin{figure}
\label{f: 1d comparisons}
\centering
 \subfloat[A pulse with Rose's tail]{
          \includegraphics[page=3]{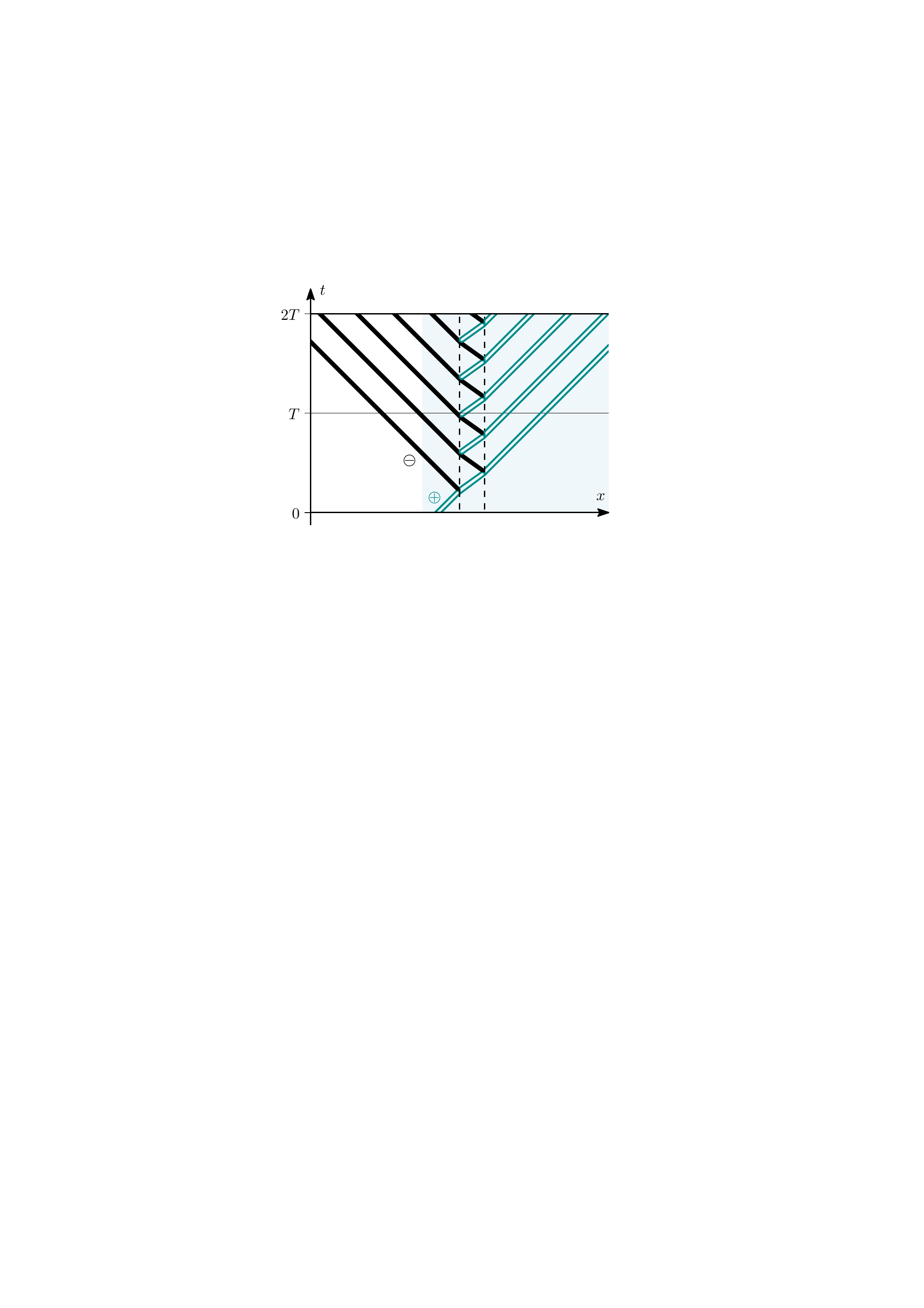}
\label{f: Roses tail}
}
\subfloat[A pulse with scattering control tail]{
\includegraphics[page=2]{Figures/TwoStep}
\label{f: pulse with ma tail}
}
\caption{
 These figures correspond to the incident pulse in Figure \ref{f:mr-demo-original}. In Rose's setup, the tail has extra waves to ensure the pressure field is quiescent exactly at $t=T$ except for the direct transmission.
In (a), the tail (constructed by the formula in (\ref{eq: generalized Rose equation})) consists of three (positive amplitude) waves being sent in after the (positive amplitude) incident pulse. The first wave cancels a returning wave which would create further scattering between the interfaces. The other two waves in the tail cancel the backscattered (negative amplitude) waves at $t=T$, and only there. Thus, at $t=T$, the singular support of the pressure field is precisely one point determined by the direct transmission. Part (b) shows the tail constructed using the scattering control algorithm. For scattering control, we only care about the returning bicharacteristics, so the tail consists of only one wave to eliminate the one returning wave. Thus, for $t\in [T,2T]$ the total wave field only consists of the direct transmission and two waves that will never go deeper into the medium.}
\label{f:rose-and-sc}
\end{figure}

\appendix

\section{Wave equation parametrix with reflection and transmission}		\label{s:parametrix-construction}

We briefly review how a parametrix for the acoustic wave equation initial value problem with piecewise smooth wave speed may be constructed in terms of reflections and transmissions, neglecting glancing rays. This is now-classical FIO theory, drawing from the work of many authors, including Chazarain~\cite{Chazarain}, Hansen~\cite{Hansen}, and Taylor~\cite{Taylor75}. As nothing novel is developed here, we do not include proofs; our goal is simply to provide a bookkeeping system for use in the paper.

Recalling~\sref{s:ml-maf}, consider $c(x)$ piecewise smooth with singular support contained in disjoint closed smooth hypersurfaces $\Gamma_i$, with $\Gamma=\bigcup\Gamma_i$. The interfaces separate $\RR^n\setminus\Gamma$ into disjoint components $\Omega_j$. In order to distinguish the sides of each hypersurface $\Gamma_i$, consider an \emph{exploded space} $Z$ in which the connected components of $\RR^n\setminus\Gamma$ are separate. It may be defined in terms of its closure, as a disjoint union 
\begin{align*}
	\clsr Z &= \bigsqcup_{\smash[b]j} \clsr\Omega_j,
	&
	Z &= \bigcup_{\smash[b]j} \Omega_j\subset\clsr Z.
\end{align*}
In this way, $\bdy Z$ contains two copies of each $\Gamma_i$, one for each adjoining $\Omega_j$. 

Before proceeding further, we perform a standard microlocal splitting in order to separate forward- and backward-moving singularities.
Recall that $\d_t^2-c^2\Delta$ factors microlocally into half-wave operators $(\d_t+iQ)(\d_t-iQ)$.
The full solution operator $F$ is then equivalent microlocally to a sum of solution operators $F^\pm$ corresponding to $\d_t\pm iQ$, with initial data related by a microlocally invertible matrix \PsiDO{} $P$:
\begin{align}
	F(f_0,f_1) &\eqml F^+g_++F^-g_-,
	&
	\begin{bmatrix} g_+\\g_-\end{bmatrix}\eqml P\begin{bmatrix} f_0\\f_1\end{bmatrix}.
\end{align}
The Cauchy data $(g_+,g_-)$ may be interpreted as a single distribution $g$ on a doubled space $\mathbf Z=Z_+\sqcup Z_-$ containing two copies of $Z$.

We now describe a parametrix $\tilde R$ for $R=\nu\circ R_{2T}$ as a sum of graph FIO on $\mathbf Z$ built from sequences of reflections and transmissions, along with operators propagating data from one boundary to another, or propagating the initial data to boundary data. The key feature of the propagators is that waves reaching the boundary of a subdomain $\Omega_j$ simply leave $\Omega_j$ rather than reflecting. To handle reflections and refractions, we record the outgoing boundary data left by waves escaping $\Omega_j$ and convert them to appropriate incoming boundary data on each side of the interface, which generate reflected and refracted waves.

\paragraph{Cauchy Propagators: $\JCS$, $\JCSp$, $\JCB$}

We first develop a reflectionless solution operator $\JCS$ for the Cauchy problem on $\mathbf Z$. To begin, extend each restriction $c_j= \restr{c}_{\Omega_j}$ to a smooth function on $\RR^n$. Let $E^\pm_j$ be the half-wave Lax parametrix associated to $\d_t\pm iQ$, $Q=(-c_j^2\Delta)^{\smash{1/2}}$. Each $\eta\in \To^*\Omega_{\pm,j}$ is associated with a unique $c_j$-bicharacteristic $\gamma_\eta(t)$ in $\To^*\RR^n$ passing through $\eta$ at $t=0$, which may escape and possibly re-enter $\Omega_{\pm,j}$ as $t\to\pm\infty$.

To prevent re-entry of wavefronts, we introduce a pseudodifferential cutoff $\varphi(t,\xi)$, omitting some details for brevity. Let $t_{\mathrm e\pm}$, $t_{\mathrm r\pm}$ denote the first positive and negative escape and re-entry times; let $\varphi(t,\gamma_\eta(t))$ be identically one on $[t_{\mathrm e-},t_{\mathrm e+}]$ and supported in $(t_{\mathrm r-}, t_{\mathrm r+})$. Modify $\varphi$ on a small neighborhood of $\RR\times\To^*\bdy\Omega_{\pm,j}$ (the glancing rays) to ensure it is smooth. Finally, let $\JCS$ be the restriction of $\varphi(t,D_x)\circ E^\pm_j$ to $\RR\times\Omega_{\pm,j}$; this is the desired reflectionless propagator.

We also require a variant $\JCSp$ of $\JCS$ in which waves travel only forward in time. For this replace $\varphi$ with some $\varphi^+$ supported in $(t_{\mathrm e-}, t_{\mathrm r+})$ and equal to 1 on $[0,t_{\mathrm e+}]$. Restricting $\JCSp$ to the boundary, we obtain the \emph{Cauchy-to-boundary} map $\JCB=\restr{\JCSp}^{\phantom+}_{\RR\times\bdy\mathbf Z}$.

It can be shown (cf.~\cite{Chazarain}) that $\JCS,\JCSp\in I^{-1/4}(\mathbf Z\shortrightarrow \RR\times \mathbf Z)$, and $\JCB\in I^0(\mathbf Z\shortrightarrow\RR\times\bdy\mathbf Z)$. As desired, $\JCS$ and $\JCSp$ are parametrices: $(\d_t\pm iQ)\JCS h,(\d_t\pm iQ)\JCSp h\eqml 0$ for $\WF(h)$ lying in a set $\mathcal V\subset\To^*\mathbf Z$ whose bicharacteristics are sufficiently far from glancing. By a direct argument with oscillatory integral representations, it can also be shown that $\JCB$ is elliptic at covectors in $\mathcal V$ whose bicharacteristics intersect $\bdy\mathbf Z$. The near-glancing covector set $\mathcal W$ of~\sref{s:microlocal} is then $\To^*\mathbf Z\setminus\mathcal V$.

\paragraph{Boundary Propagators}

Outgoing solutions from boundary data $f\in\mathcal D'(\RR\times\mathbf Z)$ may be obtained by microlocally converting boundary data to Cauchy data, then applying $\JCS$. The boundary-to-Cauchy conversion can be achieved by applying a microlocal inverse of $\JCB$, conjugated by the time-reflecting map $S_s\colon t\mapsto s-t$ for an appropriate $s$. More precisely, near any covector $\beta=(t,x';\tau,\xi')\in\bdy\Omega_{\pm,j}$ in the hyperbolic region $\abs\tau>c_j\abs{\xi'}$ there exists a unique bicharacteristic $\gamma$ passing through\footnote{That is, $(di)^*\gamma(t)=\beta$, where $i\colon \bdy \mathbf Z\hookrightarrow \clsr {\mathbf Z}$.} $\beta$ and lying inside $\bOmega_{\pm,j}$ in some time interval $[s,t)$, $s<t$. Then $\JBS$ may be defined as $S_s\JCS \JCB^{-1}S_s$ microlocally near $\beta$.

On the elliptic region $\abs\tau<c_j\abs{\xi'}$ define $\JBS$ as a parametrix for the elliptic boundary value problem; see e.g.~\cite[\textsection4.8]{SU-TATBrain}.
Applying a microlocal partition of unity, we obtain a global definition of $\JBS$ away from a neighborhood of the glancing region $\abs\tau=c_j\abs{\xi'}$. It can be proven that $\JBS\in I^{-1/4}(\RR\times\bdy\mathbf Z\shortrightarrow \RR\times \mathbf Z)$. Its restriction to the boundary $r_\bdy\circ\JBS$ consists of a pseudodifferential operator equal to the identity on $\mathcal W$ and an elliptic graph FIO $\JBB\in I^0(\RR\times\bdy\mathbf Z\shortrightarrow\RR\times\bdy\mathbf Z)$ describing waves traveling from one boundary to another.

\paragraph{Reflection and Transmission}

It is well known that transmitted and reflected waves arise from requiring a weak solution to be $C^1$ at interfaces. Given incoming boundary data $f\in\mathcal E'(\RR\times\bdy\mathbf Z)$ (an image of $\JCB$ or $\JBB$) microsupported near $\beta$, we seek data $f\subR,\,f\subT$ satisfying the $C^1$ constraints
\begin{nalign}
	f + f\subR &\eqml \iota f\subT,\\
	\d_\nu (\upsilon\JBS\upsilon f + \JBS f\subR)\big|_{\RR\times\bdy\mathbf Z} &\eqml \iota \d_\nu\JBS f\subT\big|_{\RR\times\bdy\mathbf Z}.
	\label{e:C1-continuity}
\end{nalign}
Here, $\upsilon$ is time-reversal, so $\upsilon\JBS\upsilon$ is the outgoing solution that generated $f$. The map $\iota\colon\bdy\mathbf Z\to\bdy\mathbf Z$ reverses the copies of each boundary component within $\bdy\mathbf Z$, and $\d_\nu$ denotes the normal derivative. The second equation in~\eqref{e:C1-continuity} simplifies to a pseudodifferential equation
\begin{equation}
	N\subI f + N\subR f\subR \eqml N\subT f\subT
	\label{e:C1-continuity-2}
\end{equation}
with operators $N\subI$, $N\subR$, $N\subT\in\Psi^1(\RR\times\bdy Z)$ that may be explicitly computed. The system~(\ref{e:C1-continuity}--\ref{e:C1-continuity-2}) may be microlocally inverted to recover $f\subR=M\subR f$, $f\subT=M\subT f$ in terms of pseudodifferential reflection and transmission operators $M\subR,\,\iota M\subT\in\Psi^0(\RR\times\bdy\mathbf Z)$. Let $M=M\subR+M\subT$.

The principal symbols of $M\subR$ and $\iota M\subT$ have well-known geometric interpretations. In the doubly hyperbolic region where $\abs\tau<c\abs{\xi'}$ on both sides of the interface,
\begin{align}
	\sigma_0(M\subR) &= \frac{\cot\theta\subR-\cot\theta\subT}{\cot\theta\subR+\cot\theta\subT},
	&
	\sigma_0(\iota M\subT) &= \frac{2\cot\theta\subR}{\cot\theta\subR+\cot\theta\subT},
	\label{e:geometric-ps-refl-trans}
\end{align}
where $\theta\subR$, $\theta\subT$ are the angles between the normal and the associated reflected and transmitted bicharacteristics. Here $\cot\theta\subR=\big(c\subR^{-2}\tau^2-\abs{\xi'}^2\big){}^{1/2}/\abs{\xi'}$, where $c\subR$ is the wave speed at $\beta$ on the reflected side, and similarly for $\theta_{\textrm T}$. From~\eqref{e:geometric-ps-refl-trans} we deduce $M\subT$ is elliptic in the doubly-hyperbolic region, while $M\subR$ is elliptic as long as $c$ is discontinuous at the interface. Note that while the principal symbol of $\iota M\subT$ may exceed 1, this does not violate energy conservation since $M\subT$ operates on boundary rather than Cauchy data.

\paragraph{Parametrix}

With all the necessary components defined, we now set
\begin{nalign}
	\tilde F &= \JCS + \JBS M\sum_{k=0}^\infty (\JBB M)^k\JCB,\\
	\tilde R &= r_{2T}\circ\tilde F,
	\label{e:parametrices}
\end{nalign}
where $r_{2T}$ is restriction to $t=2T$, plus time-reversal. Again omitting the proof, it can be shown that $\tilde F\eqml F$ and $\tilde R\eqml R$ away from glancing rays; that is, for initial data $h_0$ such that every broken bicharacteristic originating in $\WF(h_0)$ is sufficiently far from glancing. Recalling that $M=M\subR+M\subT$, we may write $\tilde R$ as a sum of graph FIO indexed by sequences of reflections and transmissions:
\begin{nalign}
	\tilde R &= \smash{\sum_{\mathclap{\substack{s\in\{R,T\}^k\\k\geq0}}} \,\tilde R_s,}
	\qquad\qquad&
	\tilde R_{()} &= r_{2T}  \JCS,\\
	&&\tilde R_{(s_1,\dotsc,s_k)} &= r_{2T}  \JBS  M_{s_k}\JBB\dotsb M_{s_2}\JBB M_{s_1} \JCB.
	\label{e:propagator-graph-components}
\end{nalign}
The solution operator $\tilde F$ likewise decomposes into analogous components $\tilde F_s$.

\paragraph{Comparison with Layered Media Parametrices}

The above construction is in fact the natural generalization from the flat interface case of a layered media. Indeed, suppose our space $\Theta$ is only a small perturbation of the flat layered media case (see \cite{Hij87} for notation and analysis in the flat case). This ensures that bicharacteristic segments starting from $\Gamma_i$ hit $\Gamma_{i-1}$ or $\Gamma_{i+1}$ first before hitting another interface (here, $\Omega_i$ lies below $\Gamma_i$ and above $\Gamma_{i+1}$). The full wave field may be microlocally decomposed into upgoing and downgoing components at each interface $\Gamma_i$ denoted $u^{(i)-}$, resp. $u^{(i)+}$ as described in \cite[proof of Theorem 3.1]{StDeHoop02}. Then localizing the construction of the boundary-to-boundary maps $\JBB$, we obtain $\JBB^{i,i+1}$ (resp. $\JBB^{i,i-1}$),
 which propagate $u^{i,+}$ (resp. $u^{i,-}$) to interface $\Gamma_{i+1}$ (resp. $\Gamma_{i-1}$).

Next, there are reflection and transmission operators, denoted $R^{i,j},T^{i,j} \in \Psi^0(\RR \times \Gamma_i)$ which are essentially the $M_R, M_T$ operators from before but microlocally restricted to a particular ``side'' of a particular interface. The indexing is such that $R^{i,j}$ denotes the reflection coefficient of a wave inside $\Omega_j$ reflecting off of $\Gamma_i$. While $T^{i,j}$ denotes the transmission coefficient for a wave from $\Omega_i$ into $\Omega_j$ where the constructions are made exactly as in the previous section.
Under this simplified geometry, the outgoing waves at interface $\Gamma_i$ are given by 
\[
u^{(i)+} = T^{i-1,i}\JBB^{i-1,i}u^{(i-1)+} +R^{i,i}\JBB^{i+1,i}u^{(i+1)-}
\]
and
\[
u^{(i)-} = R^{i,i-1}\JBB^{i-1,i}u^{(i-1)+} +T^{i,i-1}\JBB^{i+1,i}u^{(i+1)-}.
\]

This is all for $i\geq 2$, while for $i=1$ we must take into account the source term $\phi \in \mathcal{D}'(\Gamma_1)$ (assuming this is the only source) and only those incoming waves from $\Gamma_2$:
\begin{align*}
u^{(1)+} &= R^{1,1}\JBB^{2,1}u^{(2)-} + \phi_{\text{source}}^+ \\
u^{(1)-} &= T^{1,0}\JBB^{2,1}u^{(2)-} + \phi_{\text{source}}^-.
\end{align*}
Denote $u^{\pm} = [u^{(1)\pm}, \dots, u^{(r)\pm}]^T$. Thus, as done in \cite{Cist73}, we may combine, the $R,T$ operators and the corresponding $\JBB$ occurring in the above formulas into one operator (for example, $R^{i,i}\JBB^{i+1,i}$ becomes a single operator). Then we form $T^{\pm}$ and $R^{\pm}$, each a $r \times r$ matrix of FIO's, to obtain the following recursive formula:
\[
\col{ u^+ \\ u^-} = \bmat T^+ & R^+ \\R^- & T^- \emat \col{u^+\\u^-} + \col{(\phi^+_{\text{source}},0,\dots,0)^T\\(\phi^-_{\text{source}},0,\dots,0)^T}.
\]
Hence, it is fitting to denote $S_{sc} = \begin{bsmallmatrix} T^+ & R^+ \\R^- & T^- \end{bsmallmatrix}$ as the scattering ``matrix'', which corresponds to $\JBB M$ appearing in (\ref{e:parametrices}). To connect this construction to (\ref{e:parametrices}), start with Cauchy data $\phi_{\text{Cauchy}} \in \mathbf{C}$ with microsupport close to a single covector, whose corresponding geodesic hits $\Gamma_1$ transversely. Then the solution restricted to $\Gamma_1$ near this first intersection is microlocally equal to
$$\phi_{\Gamma_1} = \phi_{\text{incoming}} + \phi_{\text{source}},$$
 where $\phi_{\text{incoming}}=\JCB \phi_{\text{Cauchy}}$ and $\phi_{\text{source}}^+ = T^{0,1}\JCB\phi_{\text{Cauchy}}$ and
$\phi_{\text{source}}^- = R^{1,0}\JCB\phi_{\text{Cauchy}}$. So the upgoing and downgoing parts of the solution at the interfaces are given by
\[
\col{u^+\\u^-} = \col{(\phi^+_{\text{incoming}},0,\dots,0)^T\\(\phi^-_{\text{incoming}},0,\dots,0)^T} + \sum_{k=0}^{\infty} S^k_{sc} \col{(\phi^+_{\text{source}},0,\dots,0)^T\\(\phi^-_{\text{source}},0,\dots,0)^T}.
\]
After applying the boundary to solution operator, we obtain a formula exactly analogous to (\ref{e:parametrices}),
and one can use the scattering matrix to track the principal symbols of the wave field in each $\Omega_i$ separately.

\paragraph{Funding Acknowledgements:} P.~C.~and V.~K.~were supported by the Simons Foundation                 
under the MATH $+$ X program. M.~V.~dH.~was partially supported by the Simons Foundation                 
under the MATH $+$ X program, the National Science Foundation under                 
grant DMS-1559587, and by the members of the Geo-Mathematical Group at              
Rice University. G.~U.~is Walker Family Endowed Professor of Mathematics at the University of Washington, and was partially supported by the National Science Foundation, a Si-Yuan Professorship at Hong Kong University of Science and Technology, and a FiDiPro Professorship at the Academy of Finland.

\bibliographystyle{siaminitials}
\bibliography{ScatteringControl}

\def\cprime{$'$}
\begin{thebibliography}{10}

\bibitem{AR02}
{\sc T.~Aktosun and J.~H. Rose}, {\em Wave focusing on the line}, J. Math.
  Phys., 43 (2002), pp.~3717--3745.

\bibitem{Belishev97}
{\sc M.~I. Belishev}, {\em Boundary control in reconstruction of manifolds and
  metrics (the {BC} method)}, Inverse Probl., 13 (1997), pp.~R1--R45.

\bibitem{BKLS}
{\sc K.~Bingham, Y.~Kurylev, M.~Lassas, and S.~Siltanen}, {\em Iterative
  time-reversal control for inverse problems}, Inverse Probl. Imaging, 2
  (2008), pp.~63--81.

\bibitem{Bur80}
{\sc R.~Burridge}, {\em The {G}el\cprime fand-{L}evitan, the {M}archenko, and
  the {G}opinath-{S}ondhi integral equations of inverse scattering theory,
  regarded in the context of inverse impulse-response problems}, Wave Motion, 2
  (1980), pp.~305--323.

\bibitem{C}
{\sc P.~Caday}, {\em Computing {F}ourier integral operators with caustics},
  Inverse Probl., 32 (2016), p.~125001.

\bibitem{Chazarain}
{\sc J.~Chazarain}, {\em Param{\'e}trix du probl{\`e}me mixte pour
  l'{\'e}quation des ondes {\`a} l'int{\'e}rieur d'un domaine convexe pour les
  bicaract{\'e}ristiques}, in Journ{\'e}es \'{E}quations aux {D}{\'e}riv{\'e}es
  {P}artielles de {R}ennes (1975), Soc. Math. France, Paris, 1976,
  pp.~165--181. Ast{\'e}risque, No. 34--35.

\bibitem{Cist73}
{\sc A.~Cisternas, O.~Betancourt, and A.~Leiva}, {\em Body waves in a ``real
  {E}arth.'' {P}art {I}}, Bull. Seismol. Soc. Am., 63 (1973), pp.~145--156.

\bibitem{DKO}
{\sc M.~V. de~Hoop, P.~Kepley, and L.~Oksanen}, {\em On the construction of
  virtual interior point source travel time distances from the hyperbolic
  {N}eumann-to-{D}irichlet map}, SIAM J. Appl. Math., 76 (2016), pp.~805--825.

\bibitem{dHUV}
{\sc M.~V. de~Hoop, G.~Uhlmann, and A.~Vasy}, {\em Diffraction from conormal
  singularities}, Ann. Sci. {\'E}c. Norm. Sup{\'e}r. (4), 48 (2015),
  pp.~351--408.

\bibitem{Hansen}
{\sc S.~Hansen}, {\em Singularities of transmission problems}, Math. Ann., 268
  (1984), pp.~233--253.

\bibitem{KK}
{\sc A.~Kirpichnikova and Y.~Kurylev}, {\em Inverse boundary spectral problem
  for {R}iemannian polyhedra}, Math. Ann., 354 (2012), pp.~1003--1028.

\bibitem{LV}
{\sc G.~Lion and M.~Vergne}, {\em The {W}eil representation, {M}aslov index and
  theta series}, vol.~6 of Progress in Mathematics, Birkh{\"a}user, Boston,
  Mass., 1980.

\bibitem{LionsMagenes1}
{\sc J.-L. Lions and E.~Magenes}, {\em Non-homogeneous boundary value problems
  and applications. {V}ol. {I}}, Springer-Verlag, New York-Heidelberg, 1972.
\newblock Translated from the French by P. Kenneth, Die Grundlehren der
  mathematischen Wissenschaften, Band 181.

\bibitem{Rose02}
{\sc J.~H. Rose}, {\em `{S}ingle-sided' autofocusing of sound in layered
  materials}, Inverse Probl., 18 (2002), pp.~1923--1934.
\newblock Special section on electromagnetic and ultrasonic nondestructive
  evaluation.

\bibitem{S}
{\sc Y.~Safarov}, {\em A symbolic calculus for {F}ourier integral operators},
  in Geometric and spectral analysis, vol.~630 of Contemp. Math., Amer. Math.
  Soc., Providence, RI, 2014, pp.~275--290.

\bibitem{SU-TATVariable}
{\sc P.~Stefanov and G.~Uhlmann}, {\em Thermoacoustic tomography with variable
  sound speed}, Inverse Probl., 25 (2009), pp.~075011, 16.

\bibitem{SU-TATBrain}
\leavevmode\vrule height 2pt depth -1.6pt width 23pt, {\em Thermoacoustic
  tomography arising in brain imaging}, Inverse Probl., 27 (2011), pp.~045004,
  26.

\bibitem{StolkThesis}
{\sc C.~C. Stolk}, {\em On the modeling and inversion of seismic data}, PhD
  thesis, University of Utrecht, 2001.

\bibitem{Stolk04}
\leavevmode\vrule height 2pt depth -1.6pt width 23pt, {\em A pseudodifferential
  equation with damping for one-way wave propagation in inhomogeneous acoustic
  media}, Wave Motion, 40 (2004), pp.~111--121.

\bibitem{StDeHoop02}
{\sc C.~C. Stolk and M.~V. de~Hoop}, {\em Microlocal analysis of seismic
  inverse scattering in anisotropic elastic media}, Comm. Pure Appl. Math., 55
  (2002), pp.~261--301.

\bibitem{Tataru}
{\sc D.~Tataru}, {\em Unique continuation for solutions to {PDE}'s; between
  {H}{\"o}rmander's theorem and {H}olmgren's theorem}, Comm. Partial
  Differential Equ., 20 (1995), pp.~855--884.

\bibitem{Taylor75}
{\sc M.~E. Taylor}, {\em Reflection of singularities of solutions to systems of
  differential equations}, Comm. Pure Appl. Math., 28 (1975), pp.~457--478.

\bibitem{Hij87}
{\sc J.~van~der Heijden}, {\em Propagation of transient elastic waves in
  stratified anisotropic media}, PhD thesis, Technische Universiteit Delft,
  1987.

\bibitem{Wap}
{\sc K.~Wapenaar, J.~Thorbecke, J.~van~der Neut, F.~Broggini, E.~Slob, and
  R.~Snieder}, {\em Marchenko imaging}, Geophysics, 79 (2014), pp.~WA39--WA57.

\end{thebibliography}

\end{document}